\newcommand{\eop}{\bigstar}  % end-of-proof
\newcommand{\cf}{{\rm cf}}
\newenvironment{proof}{\noindent{\bf Proof.}}{\par\bigskip}
\newtheorem{THEOREM}{Theorem}[section]
\newtheorem{Conclusion}[THEOREM]{Conclusion}
\newtheorem{Hypothesis}[THEOREM]{Hypothesis}
\newtheorem{LEMMA}[THEOREM]{Lemma}
\newtheorem{Main Theorem}[THEOREM]{Main Theorem}
\newenvironment{main Theorem}{\begin{Main Theorem}} 
{\end{Main Theorem}}
\newtheorem{Theorem}[THEOREM]{Theorem}
\newenvironment{theorem}{\begin{Theorem}}{\end{Theorem}}
\newtheorem{Definition}[THEOREM]{Definition}
\newenvironment{definition}{\begin{Definition}}{\end{Definition}}
\newtheorem{Conventions}[THEOREM]{Conventions}
\newtheorem{Main Definition}[THEOREM]{Main Definition}
\newenvironment{main definition}{\begin{Main Definition}}
{\end{Main Definition}}
\newtheorem{Lemma}[THEOREM]{Lemma}
\newenvironment{lemma}{\begin{Lemma}}{\end{Lemma}}
\newtheorem{Notation}[THEOREM]{Notation}
\newtheorem{Convention}[THEOREM]{Convention}
\newtheorem{Note}[THEOREM]{Note}
\newtheorem{Observation}[THEOREM]{Observation}
\newenvironment{observation}{\begin{Observation}}
{\end{Observation}}
\newtheorem{Remark}[THEOREM]{Remark}
\newenvironment{remark}{\begin{Remark}}{\end{Remark}}
\newtheorem{Question}[THEOREM]{Question}
\newtheorem{Main Fact}[THEOREM]{Main Fact}
\newenvironment{main Fact}{\begin{Main Fact}}{\end{Main Fact}}
\newtheorem{Fact}[THEOREM]{Fact}
\newtheorem{Subfact}[THEOREM]{Subfact}
\newtheorem{Claim}[THEOREM]{Claim}
\newtheorem{Main Claim}[THEOREM]{Main Claim}
\newenvironment{main claim}{\begin{Main Claim}}{\end{Main Claim}}
\newtheorem{Crucial Claim}[THEOREM]{Crucial Claim}
\newenvironment{crucial claim}{\begin{Crucial Claim}}{\end{Crucial Claim}}
\newtheorem{Subclaim}[THEOREM]{Subclaim}
\newtheorem{Sublemma}[THEOREM]{Sublemma}
\newenvironment{sublemma}{\begin{Sublemma}}{\end{Sublemma}}
\newtheorem{Corollary}[THEOREM]{Corollary}
\newenvironment{corollary}{\begin{Corollary}}{\end{Corollary}}
\newtheorem{Example}[THEOREM]{Example}
\newtheorem{Problem}[THEOREM]{Problem}
\newtheorem{Proposition}[THEOREM]{Proposition}
\newtheorem{Conjecture}[THEOREM]{Conjecture}
\newtheorem{Discussion}[THEOREM]{Discussion}
\newenvironment{Proof of the Subfact}
{\noindent{\bf Proof of the Subfact.}}{\par\bigskip}
\newenvironment{Proof of the Theorem}
{\noindent{\bf Proof of the Theorem.}}{\par\bigskip}
\newenvironment{Proof of the Proposition}
{\noindent{\bf Proof of the Proposition.}}{\par\bigskip}
\newenvironment{Proof of the Conclusion}
{\noindent{\bf Proof of the Conclusion.}}{\par\bigskip}
\newenvironment{Proof of the Observation}
{\noindent{\bf Proof of the Observation.}}{\par\bigskip}
\newenvironment{Proof of the Fact}
{\noindent{\bf Proof of the Fact.}}{\par\bigskip}
\newenvironment{Proof of the Lemma}
{\noindent{\bf Proof of the Lemma.}}{\par\bigskip}
\newenvironment{Proof of the Claim}
{\noindent{\bf Proof of the Claim.}}{\par\bigskip}
\newenvironment{Proof of the Corollary}
{\noindent{\bf Proof of the Corollary.}}{\par\bigskip}
\newenvironment{Proof of the Subclaim}
{\noindent{\bf Proof of the Subclaim.}}{\par\medskip}
\newenvironment{Proof of the Main Claim}
{\noindent{\bf Proof of the Main Claim.}}{\par\bigskip}
\newenvironment{Proof of the Crucial Claim}
{\noindent{\bf Proof of the Crucial Claim.}}{\par\bigskip}
\newcommand{\into}{\rightarrow}
\newcommand{\rest}{\upharpoonright}  % restriction
\newcommand{\deq}{\buildrel{\rm def}\over =}
\newcommand{\DD}{{\cal D}}
\newcommand{\FF}{{\cal F}}
\newcommand{\MM}{{\cal M}}
\newcommand{\NN}{{\cal N}}
\newcommand{\PP}{{\cal P}}
\newcommand{\V}{{\bf V}}
\newcommand{\alg}{\mathfrak A}
\def\mathunderaccent#1#2 {\let\theaccent#1\skewfactor#2
\mathpalette\putaccentunder}
\def\putaccentunder#1#2{\oalign{$#1#2$\crcr\hidewidth
\vbox to.2ex{\hbox{$#1\skew\skewfactor\theaccent{}$}\vss}\hidewidth}}
\def\name{\mathunderaccent\tilde-3 }
\author{Mirna D\v zamonja\\ School of Mathematics, University of East Anglia\\Norwich, NR4 7TJ, UK
\\\scriptsize{h020@uea.ac.uk} }
\title{Isomorphic universality and the number of pairwise non-isomorphic models in the class of Banach spaces}
\begin{document}
\maketitle
\begin{abstract} We study isomorphic universality of Banach spaces of a given density and a number of pairwise
non-isomorphic models in the same class. We show that in the Cohen model the isomorphic universality number for Banach spaces of density $\aleph_1$ is $\aleph_2$, and analogous results
are true for other cardinals (Theorem \ref{isomorphisms}(1)) and that adding just one Cohen real to any model 
adds a Banach space of density $\aleph_1$ which does not embed into any such space in the ground model (Theorem \ref{Cohenreal}). Moreover, such a Banach space can be chosen to be weakly compactly generated 
(Theorem \ref{Cohenrealc}), giving similar
universality number calculations in the class of wcg spaces in the Cohen model. In another direction, we develop the framework of
{\em natural spaces} to study isomorphic embeddings of Banach spaces and use it to show that a sufficient failure of the generalized continuum hypothesis implies that the universality number of Banach spaces of a given density under
a certain kind of positive embeddings ({\em very positive embeddings}), is high (Theorem \ref{glavni}(1)), and similarly for
the number of pairwise non-isomorphic models (Theorem \ref{glavni}(2)).
\footnote{The author thanks EPSRC for the grants EP/G068720  and EP/I00498 which supported this research and the University of Wroc\l aw in Poland for their
invitation in October 2010 when some of the preliminary results were presented. I especially thank Grzegorz Plebanek for the many productive conversations during the development of this paper.

MSC 2010 Classification: 03E75, 46B26, 46B03, 03C45, 06E15.}
\end{abstract}

\section{Introduction}
We shall be concerned with the isomorphic embeddings of Banach spaces, and in particular with the 
{\em universality number} of this class. For a quasi-ordered class $(\MM, \le)$, the universality number is
defined as the smallest size of $\NN\subseteq \MM$ such that for every $M\in \MM$ there is $N\in \NN$ such that
$M\le N$. In Banach space theory we find many examples of classes whose universality numbers have been
studied, with respect to isomorphic, isometric and other kinds of embeddings. Some specific examples of this are given in sections
 \S\ref{smalldiameter} and \S\ref{vpemb}. Indeed, some of the most classical results in Banach space theory come from these considerations,
 such as the result by Banach and Mazur \cite{Banach}(p.185) that $C([0,1])$ is isometrically universal for all separable Banach spaces. 
The non-separable
case is more complex. We shall note below that the question is only interesting in the context of the failure of the generalized
continuum hypothesis, since GCH automatically gives one universal model for each uncountable density.
The question has received considerable attention over the years, including some recent work such as
that of Brech and Koszmider  \cite{BrKo} who considered Banach spaces of density the continuum and proved that in
the Cohen model for $\aleph_2$ many Cohen reals there is an isomorphically universal Banach space of density 
$\mathfrak c=\aleph_2$. 
In this paper we shall also consider Cohen and Cohen-like models, and we shall prove
results complementary to those of Brech and Koszmider. Specifically, in the model they considered, we shall see that 
the isomorphic universality number for Banach spaces of density $\aleph_1$ is $\aleph_2$, and we shall also show that analogous results
are true for other cardinals (Theorem \ref{isomorphisms}(1)). We shall also see that adding just one Cohen real to any model adds a Banach space of density $\aleph_1$ which does not embed into any such space in the ground model  (Theorem \ref{Cohenreal}).
We note that in \cite{BrKo2} it is stated (pg. 1268), without proof, that Koszmider and Thompson noted that a version of the proof from 
\cite{BrKo} gives a model where there is no isomorphically universal Banach space of density $\aleph_1$. A further relation
with the work of Brech and Koszmider is that in \cite{BrKo2} they gave a model where $2^{\aleph_0}=\aleph_2$ and  the universality number
for the class of weakly compactly generated Banach spaces of density $\aleph_1$ is $\aleph_2$, while we show that the
same is true in the classical Cohen model for $\aleph_2$ Cohen reals (Thereom \ref{wcgfinal}).

A question related to that of universality is that of the number of pairwise non-isomorphic models. This is a well studied question in
model theory (see Shelah's \cite{Sh-c}) and it has received considerable attention in Banach space theory of separable
Banach spaces, see Rosendal \cite{Rosendal} for an excellent survey of this and related problems. In the non-separable
context, there do not seem to be many results available. As a consequence of our non-universality results we show that in the Cohen model described above
there is a family of $2^{\aleph_1}$ many Banach spaces of density $\aleph_1$ which are pairwise non-isomorphic,
and the same is true for other cardinals (Theorem \ref{isomorphisms}(2)) and wcg spaces (Thereom \ref{wcgfinal}).

Moving away from forcing results, which give results valid in specific models of set theory, we would ideally like to 
establish results in the form of implications from cardinal arithmetic. An example is the above mentioned result
that under GCH there is an isomorphically universal Banach spaces for every density. In the absence of GCH results
of this type may be obtained by using the pcf theory. We explore this direction in \S\ref{natural} -\S\ref{proofCL}, where we prove
that for a special kind of embedding ``very positive'' (see Definition \ref{strictlypositive}) a sufficient failure of GCH implies
the nonexistence of universal Banach spaces in the class of spaces $C(K)$ for $K$ 0-dimensional,
(Theorem \ref{glavni} (1)) as well as the existence of a large number of pairwise non-isomorphic models (Theorem \ref{glavni} (2)).
While the use of forcing methods in the isomorphic Banach space theory is not new, as explained above, to our knowledge no pcf results
have previously been obtained in this context. Our result required a development of a new framework,
which we call {\em natural spaces}. These are model-theoretic structures which we use to talk about the
spaces of the form $C(K)$ indirectly. Model theory connected to pcf theory was used by Shelah and Usvyatsov
 \cite{ShUs} to study the isometric theory of Banach spaces and prove negative universality results about them.  We review
 these results below. They can also serve as a motivation to a question that has interested us throughout, which is to
 what extent the universality number of the class of Banach spaces of given density depends on the kind of embedding 
 considered. Let us now pass to some background results and notation. Throughout, $\kappa$ stands for an infinite cardinal.

By combining the Stone duality theorem, the fact that any Banach space $X$ is isometric to a subspace of $C(B_{X^\ast})$
and that $B_{X^\ast}$ has a totally disconnected continuous preimage, Brech and Koszmider 
proved the following:

\begin{Fact}\label{classicalf} [Fact 1.1, \cite{BrKo}] (1) The universality number of the class of Boolean algebras of size $\kappa$ is greater or equal
to the universality number of the class of Banach spaces of density $\kappa$ with
isometric embeddings, which is  greater or equal than
the universality number of the class of Banach spaces of density $\kappa$ with
isomorphic embeddings.

{\noindent (2)} The class of spaces of the form $C({\rm St}(\alg))$ for $\alg$ a Boolean algebra of size $\kappa$ is 
isometrically universal for the class of Banach spaces of density $\kappa$, and in particular its universality number with
either isometric or isomorphic embeddings is the same as the universality number of the 
whole class of Banach spaces of density $\kappa$.
\end{Fact}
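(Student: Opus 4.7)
The plan is to chain three standard ingredients: the canonical isometric embedding of a Banach space $X$ into $C(B_{X^\ast})$, the existence of totally disconnected continuous covers of compact Hausdorff spaces, and Stone duality. I will establish (2) first; (1) then falls out of it.

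For (2), let $X$ be a Banach space of density $\kappa$. The weak-$\ast$ compact unit ball $B_{X^\ast}$ is a compact Hausdorff space of topological weight $\kappa$ (since weight of $B_{X^\ast}$ equals density of $X$ for infinite-dimensional $X$), and the evaluation map $x\mapsto(x^\ast\mapsto x^\ast(x))$ yields a canonical isometric embedding $X\hookrightarrow C(B_{X^\ast})$. Next, since every compact Hausdorff space of weight $\kappa$ is a continuous image of a totally disconnected compact Hausdorff space of weight $\kappa$ (for instance via the Gleason cover, or by passing to the Stone space of a suitable Boolean subalgebra of the regular open algebra), we obtain a continuous surjection $\pi\colon L\twoheadrightarrow B_{X^\ast}$ with $L$ zero-dimensional and of weight $\kappa$. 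The map $f\mapsto f\circ\pi$ is then an isometric embedding $C(B_{X^\ast})\hookrightarrow C(L)$, and by Stone duality $L=\mathrm{St}(\alg)$ for a Boolean algebra $\alg$ of size $\kappa$. Composing produces an isometric embedding $X\hookrightarrow C(\mathrm{St}(\alg))$. Conversely, any $C(\mathrm{St}(\alg))$ with $\alg$ infinite of size $\kappa$ itself has density $\kappa$, so the class of such spaces is contained in, and isometrically universal for, the class of Banach spaces of density $\kappa$. This forces the two universality numbers (isometric and isomorphic) for the subclass to equal those of the full class.

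For (1), the right-hand inequality is immediate since every isometric embedding is an isomorphic embedding, so any isometrically universal family is a fortiori isomorphically universal. For the left-hand inequality, let $\{\alg_i : i\in I\}$ be a family of Boolean algebras of size $\kappa$ universal under Boolean algebra embeddings. Given any Banach space $Y$ of density $\kappa$, by (2) there is a Boolean algebra $\mathfrak B$ of size $\kappa$ and an isometric embedding $Y\hookrightarrow C(\mathrm{St}(\mathfrak B))$. Then $\mathfrak B$ embeds into some $\alg_i$, and Stone duality converts this into a continuous surjection $\mathrm{St}(\alg_i)\twoheadrightarrow \mathrm{St}(\mathfrak B)$, which in turn yields an isometric embedding $C(\mathrm{St}(\mathfrak B))\hookrightarrow C(\mathrm{St}(\alg_i))$. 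The composition is the required isometric embedding of $Y$ into a member of $\{C(\mathrm{St}(\alg_i)) : i\in I\}$, giving the desired isometric (hence isomorphic) universality.

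The main point to verify is the cardinality bookkeeping along this chain: that weight of $B_{X^\ast}$ matches density of $X$, that totally disconnected covers can be arranged to have the same weight as the base, and that the size of a Boolean algebra matches the weight of its Stone space. Each of these is standard but must be stated with care for infinite $\kappa$; once in place, the rest is a direct assembly via Stone duality and the canonical embedding.
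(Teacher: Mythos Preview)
Your argument is correct and matches exactly the approach the paper indicates: the paper does not give its own proof but cites \cite{BrKo} and names the same three ingredients you use (the isometric embedding $X\hookrightarrow C(B_{X^\ast})$, a totally disconnected continuous preimage of $B_{X^\ast}$, and Stone duality). One small caution: the Gleason cover need not preserve weight (e.g.\ the Gleason cover of $[0,1]$ has weight $\mathfrak c$), so rely on your second parenthetical option or, more directly, on the standard fact that every compact Hausdorff space of weight $\kappa$ is a continuous image of a closed subspace of $2^\kappa$.
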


Fact \ref{classicalf} is only interesting in the context of uncountable $\kappa$, since for $\kappa=\aleph_0$ we have
a universal Boolean algebra as well as an isometrically  universal Banach space, as explained above.
On the other hand, it is known from the classical model theory (see \cite{ChKe} for saturated and special models) that in the presence of GCH there is a universal Boolean algebra at every
uncountable cardinal, so the questions of universality for the above classes are interesting in the context of the failure of
the relevant instances of GCH. Negative universality results for Boolean algebras are known to hold
when GCH fails sufficiently by the work of Kojman and Shelah \cite{KjSh409}, and in Cohen-like extensions by the work of
Shelah (see \cite{KjSh409} for a proof). Shelah and Usvyatsov proved in \cite{ShUs} that in the models where the negative universality results that were obtained for Boolean algebras in \cite{KjSh409} hold, the same negative universality results hold for Banach spaces under isometric embeddings. The smallest cardinal at which these results can apply is $\aleph_2$.
For example, if $\lambda$ is a regular cardinal greater than $\aleph_1$ but smaller
than $2^{\aleph_0}$ (so $2^{\aleph_0}\ge \aleph_3$) there is no universal under isometries Banach space of density $\lambda$.

On the basis of what is known in the literature and what we obtain here, it is interesting to note that no known result differentiates
between the universality number of Banach spaces of a given density under isometries or under isomorphisms. Furthermore,
it is not known how to differentiate them from the universality number of Boolean algebras.

\begin{Conjecture}\label{conj1} The universality number of the class of Banach spaces of density $\kappa$ with
isomorphic embeddings is the same
as the universality number of the class of Boolean algebras of size $\kappa$.
\end{Conjecture}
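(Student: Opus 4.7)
The inequality ``universality number of Banach spaces of density $\kappa$ under isomorphisms $\le$ universality number of Boolean algebras of size $\kappa$'' is immediate from Fact \ref{classicalf}: given a universal family $\{\alg_i : i<\mu\}$ of Boolean algebras of size $\kappa$, the family $\{C({\rm St}(\alg_i)) : i<\mu\}$ is isomorphically universal for Banach spaces of density $\kappa$, because every such space embeds isometrically (hence isomorphically) into some $C({\rm St}(\alg))$ with $|\alg|=\kappa$, and this latter space embeds isometrically into $C({\rm St}(\alg_i))$ for a suitable $i$ via the Stone functor applied to a Boolean algebra embedding. The task is therefore to establish the reverse inequality.

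My plan would be to start with an isomorphically universal family $\{X_i : i<\mu\}$ of Banach spaces of density $\kappa$ and to manufacture a universal family of Boolean algebras of cardinality bounded in terms of $\mu$ and $\kappa$. For each $i<\mu$ and each rational $r>1$, let $\Gamma_{i,r}$ be the class of Boolean algebras $\alg$ of size $\kappa$ such that $C({\rm St}(\alg))$ embeds into $X_i$ with isomorphism constant at most $r$. When $r<2$ one can invoke the Amir--Cambern theorem to turn each such embedding into a continuous surjection of a closed subset of the dual unit ball $(B_{X_i^\ast},w^\ast)$ onto ${\rm St}(\alg)$; the clopen structure on the domain side then yields, after combinatorial bookkeeping on $\kappa$-many characteristic functions, a Boolean algebra $\BB_{i,r}$ of size $\kappa$ that is a common extension of all $\alg \in \Gamma_{i,r}$. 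For $r\ge 2$ one would try to replicate this using the natural-spaces framework of \S\ref{natural}, combined with Amir--Cambern-type extensions in the spirit of Cengiz, Gordon, or Jarosz, so as to absorb isomorphisms of arbitrary fixed constant into a single algebra. One would then assemble the $\BB_{i,r}$ for $i<\mu$ and $r\in\mathbb{Q}\cap (1,\infty)$ into one universal family for Boolean algebras.

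The hard part will be handling isomorphic embeddings whose constant is not controlled. The Amir--Cambern machinery degrades as the constant grows, and for large constants there exist $C(K)$-to-$C(L)$ isomorphisms that do not arise from any topological data on $K$ and $L$, which is precisely the gap between the isometric and isomorphic theories that Fact \ref{classicalf} leaves open. Bridging it seems to require locating, inside each $X_i$, some ``Boolean skeleton'' that every isomorphic copy of a $C({\rm St}(\alg))$ is forced to reveal; no current technique provides this, and it is conceivable that the full conjecture requires additional set-theoretic input such as a pcf-style analysis of natural spaces. A weaker form of the conjecture, restricted to isomorphic embeddings of fixed bounded constant, is likely tractable along the route above and would already be a meaningful step.
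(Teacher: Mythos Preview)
The statement you are attempting to prove is labeled \emph{Conjecture} in the paper, and the paper does not prove it. Immediately after stating it the author writes that ``for all we know at this point, Conjecture~\ref{conj1} could be a theorem of ZFC, that is, it is not known to fail at any $\kappa$ even consistently.'' So there is no proof in the paper to compare your proposal against; the conjecture is stated as open.

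Your easy direction is correct and is exactly Fact~\ref{classicalf}(1). For the hard direction, what you sketch for constants $r<2$ is close in spirit to what the paper actually carries out in \S\ref{smalldiameter}: using Jarosz's theorem (the embedding form of Amir--Cambern) the author proves Theorem~\ref{thesame}, which equates the universality number for isometries, the universality number for isomorphisms of Banach--Mazur diameter $<2$, and the weak universality number for Boolean algebras. That is as far as the paper goes in this direction, and it is strictly weaker than the conjecture.

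Your own write-up already identifies the genuine gap: once the isomorphism constant exceeds $2$, the Amir--Cambern/Jarosz machinery no longer produces a continuous surjection onto ${\rm St}(\alg)$, and there is no known substitute that would let you extract a Boolean-algebra skeleton from an arbitrary isomorphic copy of $C({\rm St}(\alg))$ inside $X_i$. The appeal to ``Amir--Cambern-type extensions in the spirit of Cengiz, Gordon, or Jarosz'' and to the natural-spaces framework is aspirational rather than an argument; none of those results handle unbounded distortion, and the paper's natural-spaces machinery in \S\ref{natural}--\S\ref{proofCL} is tailored to \emph{very positive} embeddings, not to general isomorphisms. So your proposal is not a proof of the conjecture, and neither is anything in the paper; you have correctly located where the obstruction lies.
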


It follows from the above discussion that Conjecture \ref{conj1} would improve Fact \ref{classicalf} (1)
and it would imply the negative universality results of Shelah and Usvyatsov. For all we know at this point, 
Conjecture \ref{conj1} could be a theorem of ZFC, that is, it is not known to fail at any $\kappa$ even consistently.
A particular case of Conjecture \ref{conj1}
is the following Conjecture \ref{conj2}, which summarizes the most interesting case from the point of view of Banach
space theory:
 
\begin{Conjecture}\label{conj2} The universality number of the class of Banach spaces of density $\kappa$ with
isomorphic embeddings is the same as the universality number of the class of Banach spaces of density $\kappa$ with
isometric embeddings.
\end{Conjecture}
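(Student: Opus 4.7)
The trivial direction gives the easy inequality: every isometric embedding is an isomorphism, so any family universal under isometries is also universal under isomorphisms. Thus the universality number under isomorphisms is at most that under isometries, and the content of Conjecture \ref{conj2} is the reverse inequality.

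I would split the work into the two regimes familiar from the discussion above. In the \emph{negative regime}, where cardinal arithmetic is supposed to force many pairwise non-embeddable Banach spaces under isometries, one would try to strengthen the Shelah--Usvyatsov argument of \cite{ShUs} so that its output spaces are hard to embed \emph{isomorphically}, not merely isometrically. The very positive embedding framework of \S\ref{natural}--\S\ref{proofCL}, culminating in Theorem \ref{glavni}, is precisely an implementation of this strategy for a restricted class of isomorphisms. Extending it to arbitrary isomorphic embeddings would, under the cardinal arithmetic hypotheses of that theorem, match the Shelah--Usvyatsov bounds and thereby give Conjecture \ref{conj2} in the negative regime.

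In the \emph{positive regime} the task is the opposite: given a family $\{N_i : i < \mu\}$ universal for isomorphic embeddings, construct a family of size not much larger than $\mu$ universal for isometric embeddings. Using Fact \ref{classicalf}(2) one may replace each $N_i$ by $C(\mathrm{St}(\alg_i))$ for some Boolean algebra $\alg_i$, giving an algebraic handle. One would then attach to each index $i$ a controlled family of equivalent renormings of $N_i$ (or equivalently, Boolean amalgamations containing $\alg_i$) so that every $X$ which $C$-embeds isomorphically into $N_i$ embeds isometrically into one of the renormings, and try to bound the size of the combined family by $\mu$, or at worst by $\mu^+$.

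The main obstacle is this last bound. A Banach space of density $\kappa$ admits $2^\kappa$ pairwise non-isometric equivalent renormings, so a na\"ive enumeration blows past any controllable cardinality. One would need a rigidity statement: for each $N_i$, a small canonical family of renormings suffices to realize every isometric embedding coming from some isomorphic embedding into $N_i$. To my knowledge no such statement is in the literature, and producing one essentially amounts to the conjecture itself. Genuine progress likely requires refining the natural space framework of \S\ref{natural} so that it captures isomorphism type rather than only the coarser invariants sensitive to very positive embeddings; the pcf techniques of \S\ref{proofCL} could then be applied uniformly to both isometric and isomorphic embeddings, yielding the matching bounds predicted by Conjecture \ref{conj2}.
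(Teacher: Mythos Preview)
The statement you are attempting to prove is labelled \emph{Conjecture} in the paper, and the paper does not prove it. It is stated as an open problem, motivated by the observation that no known result differentiates the isometric and isomorphic universality numbers, and by the partial results obtained elsewhere in the paper (notably Theorem \ref{isomorphisms} and Theorem \ref{thesame}). There is therefore no proof in the paper to compare your proposal against.

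Your proposal is not a proof either, and you acknowledge this yourself. What you have written is a reasonable research outline: you correctly identify the trivial inequality, you split the problem into a negative regime (strengthen the Shelah--Usvyatsov invariants so they detect isomorphic non-embeddability) and a positive regime (from an isomorphically universal family manufacture an isometrically universal one of the same size), and you point to the paper's very-positive-embedding machinery as a template for the negative regime. But in each regime you stop at the genuine obstacle rather than overcoming it. In the negative regime, extending Theorem \ref{glavni} from very positive embeddings to arbitrary isomorphisms is exactly the hard open step; the paper's invariants are tailored to the order structure that very positive embeddings respect, and there is no indication of how to make them robust under a general bounded linear bijection. In the positive regime, you correctly note that a space of density $\kappa$ has $2^\kappa$ renormings and that bounding the needed family by $\mu$ ``essentially amounts to the conjecture itself.'' That is an honest assessment, but it means the proposal contains no new idea beyond restating the problem.

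In short: there is nothing to grade against, and your write-up is a survey of approaches rather than a proof. If you intend to make progress, the concrete next step suggested by the paper would be to try to weaken the hypotheses in Definition \ref{strictlypositive} while keeping the Preservation Lemma \ref{preservation} true, and see how far toward general isomorphisms that can be pushed.
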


In Banach space theory one studies other kinds of embeddings but isomorphisms and isometries, an example
is described in \S\ref{smalldiameter}. It would be interesting to refine the above conjectures in other contexts, for example in the
context of embeddings with a fixed Banach-Mazur diameter, see \S\ref{smalldiameter} for a discussion.
We now finish the introduction by giving some background information for the readers less familiar with Banach space theory.

\begin{definition} A {\em Banach space} is a normed vector space complete in the metric induced by the norm.
A linear embedding $T:X\to Y$ between Banach spaces is {\em an isometry} if for every $x\in X$ we have
$||x||=||Tx||$, where we use $Tx$ to denote $T(x)$. A linear embedding $T:X\to Y$ between Banach spaces is {\em an isomorphism} if there is a constant $D>0$ such that for every $x\in X$ we have
$\dfrac{1}{D} ||x||\le ||Tx||\le D||x||$. 
\end{definition}

\begin{remark} Every isometry is an isomorphism. An isomorphism is in particular an injective continuous function and in fact, 
a linear map $T$ is an isomorphism iff  both $T$ and $T^{-1}$ are linear and continuous.

For $T$ an isomorphism we define $||T||\deq \sup \{||T(f)||:\,||f ||=1\}$ and we say that the {\em Banach-Mazur diameter} of $T$ is
the quantity $||T||\cdot ||T^{-1}||$.
\end{remark}

Throughout the paper letters $\frak A$ and $\frak B$ will be used for Boolean algebras, $\kappa,\lambda$ for
infinite cardinals and $K$ and $L$ for
compact spaces. We shall write ${\rm St}(\alg)$ for the Stone space of a Boolean algebra $\alg$. Let us note that Fact \ref{classicalf}
implies

\begin{observation}\label{gap} The universality number of Banach spaces of density $\kappa$, under any kind of embeddings, is either 1 or
$\ge\kappa^+$.
\end{observation}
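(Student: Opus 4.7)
The plan is to prove the contrapositive: if the universality number $\mu$ of the class of Banach spaces of density $\kappa$ under a fixed notion of embedding satisfies $\mu\le\kappa$, then $\mu=1$. So let $\{N_\alpha:\alpha<\mu\}$ be a universal family with $\mu\le\kappa$. The strategy is to amalgamate it into a single Banach space $N$ of density $\kappa$ that remains universal.

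The natural candidate is the $\ell^p$-sum for some fixed $1\le p<\infty$,
$$N\deq\Bigl(\bigoplus_{\alpha<\mu}N_\alpha\Bigr)_{\ell^p}.$$
Each $N_\alpha$ sits isometrically inside $N$ via the map extending a vector by zeros on the remaining coordinates; in particular $N$ has density at least $\kappa$. Conversely, if $D_\alpha\subseteq N_\alpha$ is dense of size $\kappa$, then the set of all finite rational linear combinations of vectors drawn from $\bigcup_{\alpha<\mu} D_\alpha$ (viewed inside $N$ via the inclusions) is dense in $N$ and has cardinality $\mu\cdot\kappa=\kappa$. Hence the density of $N$ is exactly $\kappa$.

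Given any Banach space $M$ of density $\kappa$, universality of the family supplies some $\alpha<\mu$ and an embedding $f\colon M\hra N_\alpha$ of the prescribed kind; composing with the isometric inclusion $N_\alpha\hra N$ yields an embedding $M\hra N$ of the same kind, because the kinds of embeddings relevant here (isomorphisms, isometries, and the various refinements considered in the paper) are closed under composition with isometries and preserve the Banach--Mazur diameter. Therefore $N$ alone witnesses that the universality number is $1$, contradicting the assumption $\mu>1$. Consequently the universality number cannot take any value strictly between $1$ and $\kappa^+$, which is the observation.

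The only subtlety is the verification, for a given notion of embedding, that composition with the canonical isometric inclusion $N_\alpha\hra N$ preserves that notion; this is a routine check in every standard case, and is the sole place where the nature of the embedding enters the proof.
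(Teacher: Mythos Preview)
Your proof is correct and follows the same overall outline as the paper's: assume a universal family of size $\le\kappa$, amalgamate it into a single object of the right size into which each member embeds isometrically, and conclude. The difference is in the amalgamation step. The paper first invokes Fact~\ref{classicalf}(2) to replace each $N_\alpha$ by a space $C({\rm St}(\alg_\alpha))$ with $\alg_\alpha$ a Boolean algebra of size $\kappa$, then amalgamates on the Boolean-algebra side by freely generating a single algebra $\alg$ over the disjoint union of the $\alg_\alpha$; each $\alg_\alpha$ embeds into $\alg$, so each $C({\rm St}(\alg_\alpha))$ embeds isometrically into $C({\rm St}(\alg))$. Your argument bypasses the Boolean-algebra detour entirely and amalgamates directly via an $\ell^p$-sum, which is more elementary and does not depend on Fact~\ref{classicalf}. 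Both proofs rely on the same implicit hypothesis---that the notion of embedding under consideration is preserved under post-composition with an isometric inclusion---and you are right to flag this explicitly; the paper uses it silently.
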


This is so because if for any $\alpha^\ast\in [1,\kappa^+)$ we had that $\{X_\alpha:\,\alpha<\alpha^\ast\}$ were a universal family
of Banach spaces of density $\kappa$,
then we could assume that each $X_\alpha=C({\rm St}(\alg_\alpha))$ for some Boolean algebras $\alg_\alpha$ of size
$\kappa$. Therefore we could find a single algebra $\alg$ of size
$\kappa$ such that all $\alg_\alpha$ embed into it \footnote{simply by freely generating an algebra by a disjoint union of all $\alg_\alpha$}
and hence then $C({\rm St}(\alg))$ would be a single universal Banach space of density $\kappa$.

\section{Universals in Cohen-like extensions}\label{Cohenlike}
It is a well known theorem of Shelah (see \cite{KjSh409}, Appendix, for a proof) that in the extension obtained by adding a regular 
$\kappa\ge \lambda^{++}$ number of
Cohen subsets to a regular $\lambda$ over a model of GCH, the universality number for models of size $\lambda^+$ for
any unstable complete first order theory is $\kappa=2^\lambda$, so the maximal possible. This in particular applies to Boolean algebras. We
explore to what extent this can be adapted to 
the class of Banach spaces of density $\lambda$ by looking at variants of Cohen forcing. Our results show that Conjecture \ref{conj1}
holds in this special case, that is, in these circumstances we obtain the strongest possible negative universality result.

Let $\lambda=\lambda^{<\lambda}$.
Fix a set $A=\{a_i:\,i<\lambda^+\}$ of indices which we shall assume forms an unbounded co-unbounded subset of
$\lambda^+$.

\begin{definition} The forcing  ${\mathbb P}(\lambda)$ consists of Boolean algebras $p$ generated on a subset of $\lambda^+$ by some subset $w_p$ of $A$ of size $<\lambda$
satisfying $p\cap A=w_p$.
The ordering on  ${\mathbb P}$ is given by $p\le q$ if $p$ is embeddable as a subalgebra of $q$ and the embedding fixes
$w_p$, where in our notation $q$ is the stronger condition.
\end{definition}

Let us check some basic properties of the forcing ${\mathbb P}(\lambda)$, reminding the reader of the following notions:

\begin{definition}\label{ccclosed} (1) A forcing notion ${\mathbb P}$ is said to be $\lambda^+$- {\em stationary cc} if
for every set $\{p_i:\,i<\lambda^+\}$ of conditions in ${\mathbb P}$, there is a club $C$ and a regressive function $f$
on $C$ satisfying that for every $i,j<\lambda^+$ of cofinality $\lambda$ satisfying $f(i)=f(j)$, the conditions $p_i$ and $p_j$ are
compatible.

{\noindent (2)} A subset $Q$ of a partial order $P$ is {\em directed} if every two elements of $Q$ have an upper bound in
$P$.  A forcing notion ${\mathbb P}$ is $(<\lambda)$-directed closed if every directed $Q\subseteq {\mathbb P}$
of size $<\lambda$, has an upper bound in ${\mathbb P}$.
\end{definition}

\begin{lemma}\label{properties}
(1)  ${\mathbb P}(\lambda)$ satisfies the $\lambda^+$-stationary cc and is $(<\lambda)$-directed closed.

{\noindent (2)} ${\mathbb P}(\lambda)$ adds a Boolean algebra of size $\lambda^+$ generated by $\{a_i:\,i<\lambda^+\}$.

\end{lemma}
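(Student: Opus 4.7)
The plan is to treat the two closure properties and the generic object separately. For the $(<\lambda)$-directed closure, given a directed family $\{p_\alpha:\alpha<\mu\}$ with $\mu<\lambda$, I set $w=\bigcup_{\alpha<\mu}w_{p_\alpha}\subseteq A$. Regularity of $\lambda$ and the bound $|w_{p_\alpha}|<\lambda$ give $|w|<\lambda$. Directedness supplies, for every pair $p_\alpha,p_\beta$, a $p_\gamma$ in which both embed fixing their $w$-sets, so the induced cone of embeddings is consistent and we form the direct limit $q$ in the category of Boolean algebras. The result is generated by (a copy of) $w$ and so has abstract size $<\lambda$; realizing it on a subset of $\lambda^+$ disjoint from $A\setminus w$ (possible since $\lambda^+\setminus A$ has cardinality $\lambda^+$) yields the desired upper bound with $w_q=w$.

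For the $\lambda^+$-stationary chain condition, I apply a $\Delta$-system plus isomorphism-type argument. Given $\{p_i:i<\lambda^+\}$, the $\Delta$-system lemma (available because $\lambda^{<\lambda}=\lambda$) yields a stationary $S_0\subseteq\lambda^+$ on which the sets $w_{p_i}$ form a $\Delta$-system with root $r$. Since there are at most $\lambda$ isomorphism types of Boolean algebras of size $<\lambda$ equipped with a distinguished enumerated generating set, we further thin to a stationary $S_1\subseteq S_0$ on which all pairs $(p_i,w_{p_i})$ share one type, with the enumerations arranged so that the root $r$ occupies the same positions in each. Let $C$ be the club of $i<\lambda^+$ such that $w_{p_j}\subseteq i$ for every $j\in S_1\cap i$, and on $C\cap S_1\cap\{i:\cf(i)=\lambda\}$ define a regressive $f$ coding $w_{p_i}\cap i$ together with the fixed isomorphism-type data (codeable as an ordinal below $i$ since $\cf(i)=\lambda$ and $\lambda=\lambda^{<\lambda}$). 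Whenever $f(i)=f(j)$ the conditions $p_i,p_j$ share root $r$ with canonically identified structure over $r$, so their free amalgamation over the common subalgebra generated by $r$, realized on an appropriate subset of $\lambda^+$, is a condition $q$ with $w_q=w_{p_i}\cup w_{p_j}$ extending both.

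For part (2), a direct density argument shows that for each $i<\lambda^+$ the set $\{p:a_i\in w_p\}$ is dense: any $p$ can be extended by freely adjoining $a_i$ as a new generator, producing a stronger condition containing $a_i$. Hence every $a_i$ appears in some condition of the generic filter $G$, and the direct limit of $G$ is a Boolean algebra of size $\lambda^+$ generated by $\{a_i:i<\lambda^+\}$.

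The main obstacle is the compatibility step of part (1): one must simultaneously coordinate the $\Delta$-system root, the fixed isomorphism type of the enumerated pair $(p_i,w_{p_i})$, and the coding into a regressive function, so that the canonical identification over $r$ aligns in $p_i$ and $p_j$; and then verify that the abstract amalgamation can be realized on $\lambda^+$ with the normalization $q\cap A=w_q$ preserved. The freedom available in $\lambda^+\setminus A$ makes the last step routine, but the bookkeeping for the isomorphism-type thinning is the genuinely delicate piece.
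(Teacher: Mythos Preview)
Your arguments for $(<\lambda)$-directed closure and for part~(2) are correct and align with the paper's.

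For the $\lambda^+$-stationary cc, however, there is a structural mismatch with the definition. The definition asks for a club $C$ and a regressive $f$ on $C$ such that for \emph{all} $i,j\in C$ of cofinality $\lambda$ with $f(i)=f(j)$ the conditions $p_i,p_j$ are compatible. You first thin to a stationary $S_1$ via the $\Delta$-system lemma and isomorphism-type selection, and only then define $f$ on $C\cap S_1$. That leaves $f$ undefined on the (possibly stationary) set of cofinality-$\lambda$ points of $C\setminus S_1$, and there is no way to extend $f$ regressively there while preventing new collisions: by Fodor, a regressive function cannot be injective on a stationary set. So as written you have not verified the stated property; you have only produced a stationary set of pairwise compatible conditions, which is a priori weaker.

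The paper avoids this by dropping the $\Delta$-system step entirely. It fixes a bijection $g:\mathcal I\times[\lambda^+]^{<\lambda}\to\lambda^+$ (where $\mathcal I$ is the set of isomorphism types of enumerated generated algebras of size $<\lambda$), takes $C$ to be the club of $\gamma$ with $i<\gamma\Rightarrow w_{p_i}\subseteq\gamma$ and $g``(\mathcal I\times[\gamma]^{<\lambda})\subseteq\gamma$, and sets $f(j)=g(t(w_j),\,w_j\cap j)$ for \emph{every} $j\in C$ of cofinality $\lambda$. Then $f(i)=f(j)$ with $i<j$ forces $w_{p_i}\cap i=w_{p_j}\cap j=w_{p_i}\cap w_{p_j}$ together with equal isomorphism type over that common part, and the free amalgam gives the required extension. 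In other words, everything your $\Delta$-system and type-thinning were meant to achieve is already encoded by the single regressive function; the pre-thinning is unnecessary and is precisely what prevents your $f$ from living on a club.
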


\begin{proof} For part (1), suppose that $\{p_i:\,i<\lambda^+\}$ are conditions in ${\mathbb P}(\lambda)$. Let us denote
by $w_i$ the set $w_{p_i}$ and let us consider it in its increasing enumeration. Let the {\em isomorphism type} of $w_i$
be
determined by the order type of $w_i$ and the Boolean algebra equations satisfied between the elements of $w_i$, denote this by $t(w_i)$. Note that by 
$\lambda^{<\lambda}=\lambda$ the cardinality of the set $\mathcal I$
of isomorphism types is exactly $\lambda$, so let us fix a bijection 
$g:\,{\mathcal I}\times [\lambda^+]^{<\lambda} \to \lambda^+$. 
Note that there is a club $C$ of $\lambda^+\setminus 1$ such that for every point $\gamma$ in $C$ of cofinality $\lambda$ we have
$i<\gamma$ iff $w_i\subseteq \gamma$ and $g``(\,{\mathcal I}\times\gamma)\subseteq\gamma$. Define $f$ on $C$ by letting $f(j)=g(t(w_j), w_j\cap j)$ for $j$ of cofinality $\lambda$ and $0$ otherwise. Hence $f$ is regressive on $C$
and if $i<j$ both in $C$ have cofinality $\lambda$ and satisfy $f(i)=f(j)$ then we have that $w_i\cap i=w_j\cap j= w_i\cap w_j$ and that $p_i$ and $p_j$
satisfy the same equations on this intersection. Hence $p_i$ and $p_j$ are compatible.

For the closure, note that a family of conditions in ${\mathbb P}(\lambda)$ being $(<\lambda)$-directed in particular implies
that the conditions in any finite subfamily agree on the equations involving any common elements. Therefore the union of the
family generates a Boolean algebra which is a common upper bound for the entire family.  

For part (2), note that if $p\in {\mathbb P}$ and $a_i$ is not in $w_p$, then we can extend $p$ to 
$q$ which is freely generated by $w_p \cup\{a_i\}$, except for the equations present in $p$. Hence the set
$\DD_i=\{p:\,a_i\in w_p\}$ is dense. Note that if $p,q\in G$ satisfy
$p\cap A=q\cap A$, then $p$ and $q$ are isomorphic over $w_p$. Let $H$ be a subset of $G$ which is obtained by
taking one representative of each isomorphism class with the induced ordering, hence $H$ is still a filter in ${\mathbb P}$ and it 
intersects every $\DD_i$. By the downward closure of $G$ it follows that for
every $F$ a subset of $A$ of size $<\lambda$, there is exactly one element $p_F$ of $H$ with $w_{p_F}=F$. (Note that no 
new bounded subsets of $\lambda$ are added by ${\mathbb P}$, since (1) holds).
Now note that $\{p_F:\,F\in [A]^{<\lambda}\}$ with the relation of embeddability over $A$ form an inverse system of Boolean
algebras directed by $([A]^{<\lambda}, \subseteq)$. Let $B$ be the inverse limit of this system (see \cite{Ha} for an explicit
construction of this object), so $B$ is generated by $\{a_\alpha:\,\alpha<\lambda\}$ and it embeds every element of $H$.
(We shall call this algebra ``the" generic algebra. It is unique up to isomorphism).
$\eop_{\ref{properties}}$
\end{proof}

Note that in the case of $\lambda=\aleph_0$, we in particular have that the forcing is ccc.

\begin{theorem}\label{nonembedding} If $\alg$ is the generic Boolean algebra for some  ${\mathbb P}(\lambda)$, then there is no Banach space $X_\ast$ in the ground model such 
that $C({\rm St}(\alg))$ isomorphically embeds into $X_\ast$ in the extension by ${\mathbb P}(\lambda)$.
\end{theorem}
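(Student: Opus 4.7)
I would prove this by a two-way extension argument, exploiting the combinatorial freedom of ${\mathbb P}(\lambda)$ to produce, among the generic generators, two common extensions of $n$ chosen conditions that force incompatible norm bounds on the same ground-model-approximated quantity. Assume toward a contradiction that some $p_0\in{\mathbb P}(\lambda)$ forces $\dot T\colon C({\rm St}(\dot\alg))\to \check X_\ast$ is an isomorphic embedding with Banach--Mazur constant $D$. Fix an integer $n>D^2$ (possible since $D$ is a fixed real) and a positive real $\epsilon$ with $\epsilon<(n-D^2)/(2nD)$. The target is the contradiction $n/D-D\le 2n\epsilon$.

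The first step is a standard density step using that $X_\ast$ lies in $V$: since $\dot T(\chi_{[a_i]})$ is forced to take values in the ground-model set $X_\ast$, the usual argument shows that for each $i<\lambda^+$ there is a condition $p_i\le p_0$ with $a_i\in w_{p_i}$ and a ground-model vector $v_i^\ast\in X_\ast$ such that
$$p_i\forces \|\dot T(\chi_{[a_i]})-\check v_i^\ast\|<\epsilon.$$
By further density picks I arrange that in $p_i$ the generator $a_i$ is ``sufficiently free'' in $w_{p_i}$---in particular that $p_i$ does not force $a_i\in\{0,1\}$ nor that $a_i$ is expressible in terms of the rest of $w_{p_i}$.

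Next I stabilize the family $\{p_i\}_{i<\lambda^+}$. Since $\lambda=\lambda^{<\lambda}$, the $\Delta$-system lemma passes to a $\lambda^+$-sized subfamily whose supports form a $\Delta$-system with root $R$; by discarding $<\lambda^+$ indices I may also assume $a_i\notin R$. There are only $\lambda^{<\lambda}=\lambda$ isomorphism types of the Boolean structure of $p_i$ on $R\cup\{a_i\}$, so pigeonhole yields a $\lambda^+$-sized subfamily whose conditions all agree on this type. These conditions are jointly compatible: for any $n$ indices $i_1<\cdots<i_n$ in this subfamily the free amalgamation over $R$ of $p_{i_1},\ldots,p_{i_n}$ is a legitimate common extension in ${\mathbb P}(\lambda)$.

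Now fix such $n$ indices and construct two different common extensions. Let $q$ be the free amalgamation over $R$: by the ``freeness'' arranged in step one each $a_{i_k}$ is free over $R\cup\{a_{i_l}:l\ne k\}$ in $q$, so $\bigcap_k[a_{i_k}]\ne\emptyset$ in the Stone space of any algebra extending $q$ and $\|\sum_k\chi_{[a_{i_k}]}\|=n$, forcing $q\forces \|\sum_k\dot T(\chi_{[a_{i_k}]})\|\ge n/D$ and hence $\|\sum_k v_{i_k}^\ast\|\ge n/D-n\epsilon$. Let $q'$ be the same amalgamation but with the additional relations $a_{i_k}\wedge a_{i_l}=0$ for $k\ne l$; this is a legitimate condition because each such relation involves a generator $a_{i_l}\notin w_{p_{i_k}}$ and is therefore consistent with the prior structure of each $p_{i_k}$ (the no-degeneracy assumption on $a_i$ precludes any collapse). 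In $q'$ the characteristic functions $\chi_{[a_{i_k}]}$ are pairwise disjoint, so $\|\sum_k\chi_{[a_{i_k}]}\|=1$, forcing $\|\sum_k v_{i_k}^\ast\|\le D+n\epsilon$. Since the $v_{i_k}^\ast$ are in $V$, the real number $\|\sum_k v_{i_k}^\ast\|$ is absolute, so both bounds hold simultaneously, giving $n/D-D\le 2n\epsilon$---the desired contradiction. The principal obstacle is the combinatorial bookkeeping of step two: running the $\Delta$-system and pigeonhole on enough invariants (the root $R$, the type of $a_i$ over $R$, the ``freeness'' of $a_i$ in $p_i$) so that \emph{both} $q$ and $q'$ are simultaneously legitimate extensions of each $p_{i_k}$ in ${\mathbb P}(\lambda)$; it is precisely here that the rich combinatorial freedom of the generic Boolean algebra is used.
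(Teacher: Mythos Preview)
Your proof is correct and follows essentially the same strategy as the paper: fix a constant for the putative embedding, choose for each $i$ a condition $p_i$ with $a_i\in w_{p_i}$ that pins down (an approximation to) $T(\chi_{[a_i]})$ in the ground-model space $X_\ast$, run a $\Delta$-system with isomorphism-type refinement, and then extend finitely many of the $p_i$ in two incompatible ways to force contradictory norm bounds on the same ground-model vector.

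There are two cosmetic differences worth noting. First, you approximate $T(\chi_{[a_i]})$ by $v_i^\ast$ to within $\epsilon$, whereas the paper simply has $p_i$ decide the exact value $x_i=T(\chi_{[a_i]})\in X_\ast$; since $X_\ast$ lies in the ground model this exact decision is available by density, so your $\epsilon$-bookkeeping is unnecessary here (though it is exactly the device needed later in the iterated setting of Theorem~\ref{isometries}, where the target space is not entirely in any proper initial segment). Second, for the ``high norm'' extension you take the free amalgam and use $\bigcap_k[a_{i_k}]\neq\emptyset$, while the paper instead forces the generators into a chain $a_{i_0}\le\cdots\le a_{i_{n_\ast^2}}$; both give $\|\sum_k\chi_{[a_{i_k}]}\|$ equal to the number of terms, and your version is justified by the isomorphism-type pigeonhole (the filter $\{c\in\langle R\rangle:a_{i_k}\le c\}$ is the same for every $k$, so the meet cannot vanish unless some $a_{i_k}=0$). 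The ``disjoint'' extension is identical in both proofs. Apart from reversing the paper's order convention on conditions, your argument matches the paper's.
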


\begin{proof} Let us fix a $\lambda$.
Suppose for a contradiction that there is an $X_\ast$ and embedding $T$ in the extension contradicting the statement of
the theorem. Let $p^\ast$
force that $\name{T}$ is an isomorphic embedding and, without loss of generality $p^\ast$ decides a positive constant
$c$ such that $1/c \cdot ||x||\le ||Tx||\le c\cdot  ||x||$ for every $x$. Let $n_\ast$ be a positive integer such that $c< n_\ast$.
For each $i<\lambda^+$ let $p_i\ge p^\ast$ be such that 
$a_i\in w_{p_i}$, and without loss of generality $p_i$ decides the value $x_i=T (\chi_{[a_i]})$. Now let us perform
a similar argument as in the proof of the chain condition, and in particular by passing to a subfamily of the
same cardinality we can assume that
the sets $w_{p_i}$ form a $\Delta$-system with a root $w^\ast$  . Since $a_i\in w_{p_i}$, we can assume
that $w_i\neq w^\ast$, and in particular that $a_i\in w_i\setminus w^\ast$. Let us take distinct
$i_0,  i_1, \dots   i_{n^2_\ast}$. Since there
are no equations in $p_i$ or $p_j$ that connect $a_i$ and $a_j$ for $i\neq j$, we can find two conditions $q'$ and $q''$
which both extend $p_{i_0},\ldots p_{n^2_\ast}$ and such that in $q'$ we have $a_{i_0} \le \ldots \le a_{i_{n^2_\ast}}$
and in $q'$ we have that $a_{i_0},\ldots a_{i_{n^2_\ast}}$ are pairwise disjoint.
Hence $q'$ forces that $\chi_{[a_{i_0]}}+\ldots \chi_{[a_{i_{n^2_\ast}}]}$ has norm $n^2_\ast+1$ and
 $q''$ forces that it has norm 1. Therefore the vector $x_{i_0}+\ldots x_{i_{n^2_\ast}}$ must have norm both $>n_\ast$ and 
 $<n_\ast$ in $X_\ast$, which is impossible.
$\eop_{\ref{nonembedding}}$
\end{proof}

The ideas in the proof of Theorem \ref{nonembedding} which use the possibility to extend Cohen-like conditions in incomparable ways are present in the proof by Brech and Koszmider  (\cite{BrKo}, Theorem 3.2) that $l^\infty/c_0$ is not a universal Banach space
for density $\mathfrak c$ in the extension by $\aleph_2$ Cohen reals, and the earlier proof by D\v zamonja and Shelah 
(\cite{DjSh614}, Theorem 3.4) that in the model considered there
there is no universal normed vector space over ${\mathbb Q}$ of size 
$\aleph_1$ under isomorphic vector space embeddings. In fact in the case of $\lambda=\aleph_0$ we can expand on these ideas
to prove the following Theorem \ref{Cohenreal}. We shall use a simplified $(\omega,1)$-morass constructed in ZFC by Velleman
in \cite{Vellemanmorass}\footnote{In a related work, Velleman in \cite{Velleman} showed that an effect of a Cohen real on such a morass is to add a Souslin tree.}.
 
\begin{theorem}\label{Cohenreal} 
Forcing with one Cohen real adds a Boolean algebra $\alg$ of size $\aleph_1$ such that
in the extension the Banach space $C({\rm St}(\alg))$ does not isomorphically embed into any space $X_\ast$ in
the ground model of density $\aleph_1$.
\end{theorem}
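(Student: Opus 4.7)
The plan is to imitate the forcing $\mathbb{P}(\aleph_0)$ of the previous section, but realize a suitable generic for it inside an extension by a \emph{single} Cohen real, using Velleman's simplified $(\omega,1)$-morass as an amplifier. Velleman's construction (\cite{Velleman}) of a Souslin tree from one Cohen real plus such a morass is the model: the morass supplies a coherent system of embeddings between countable structures whose direct limit has size $\aleph_1$, and the Cohen real furnishes the countable ``generic data'' which the morass then copies along $\omega_1$. First, I would construct in $V[c]$ (where $c$ is the Cohen real) a Boolean algebra $\alg$ of size $\aleph_1$ generated by $\{a_i:\,i<\omega_1\}$ as follows. At each morass node $\alpha<\omega_1$ one reads off a countable Boolean algebra $\alg_\alpha$, generated by the generators indexed by the image of $\alpha$ in the morass, whose equations are dictated by $c$ restricted to the relevant countable piece. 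The morass embeddings amalgamate the $\alg_\alpha$ into a directed system; $\alg$ is their limit.

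Second, I would establish the combinatorial flexibility of $\alg$ that is exploited in Theorem \ref{nonembedding}: for any finitely many generators $a_{i_0},\ldots,a_{i_k}$ and any two consistent sets of Boolean equations among fresh generators, there are two incompatible Cohen conditions extending the same base that force the $a_{i_j}$ to satisfy each of these sets respectively. This is inherited directly from the density of analogous extensions in ordinary Cohen forcing, read through the morass maps: because the morass carries every finite configuration of indices back, up to isomorphism, to a finite configuration inside one countable node, forcing genericity at the countable node yields the desired density at $\omega_1$.

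Third, assume for contradiction that in $V[c]$ there is a Banach space $X_\ast\in V$ of density $\aleph_1$ and an isomorphic embedding $T:C({\rm St}(\alg))\to X_\ast$. Let $p^\ast$ be a Cohen condition forcing this, and let it decide a constant $D>0$ with $\tfrac{1}{D}\|x\|\le \|Tx\|\le D\|x\|$. Pick an integer $n_\ast>D$. For each $i<\omega_1$ choose $p_i\ge p^\ast$ deciding a ground-model name $\name{x}_i$ for $T(\chi_{[a_i]})$. Since Cohen forcing is countable there are only countably many distinct pairs $(p_i,\name{x}_i)$. Using the morass, reflect: find indices $i_0<\ldots <i_{n_\ast^2}$ that, via the morass embeddings, descend to generators in a single countable node sharing a common condition. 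Apply the flexibility from the previous step to obtain two extensions $q'$ and $q''$ of $p_{i_0},\ldots,p_{i_{n_\ast^2}}$ such that $q'\forces a_{i_0}\le a_{i_1}\le\ldots\le a_{i_{n_\ast^2}}$ and $q''\forces\{a_{i_j}\}_{j\le n_\ast^2}$ are pairwise disjoint. Then $q'\forces \|\chi_{[a_{i_0}]}+\ldots+\chi_{[a_{i_{n_\ast^2}}]}\|=1$ while $q''\forces$ the same sum has norm $n_\ast^2+1$; but both conditions force $T$ to send the sum to the same ground-model vector $\name{x}_{i_0}+\ldots+\name{x}_{i_{n_\ast^2}}$, and the Banach-Mazur bound forces its norm simultaneously to be $>n_\ast$ and $<n_\ast$, a contradiction exactly as in Theorem \ref{nonembedding}.

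The main obstacle is the morass bookkeeping of step three: one must ensure the assignment $i\mapsto(p_i,\name{x}_i)$ is ``captured'' by the morass in the sense that the reflection maps preserve this data well enough to produce the two incompatible extensions $q',q''$ at the countable-node level. This is the same issue Velleman solves for the Souslin tree construction, where conditions are bounded via the morass structure, and the argument transfers here because the relevant data (a Cohen condition and a name for a vector in a ground-model space of density $\aleph_1$) lies in a set of size $\aleph_1$ on which the morass can act naturally. The rest is a direct repetition of the disjoint-versus-chain contradiction of Theorem \ref{nonembedding}.
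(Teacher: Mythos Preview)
Your overall strategy is right and mirrors the paper's: build $\alg$ along a simplified $(\omega,1)$-morass with the Cohen real supplying the generic bits, then derive the chain/antichain contradiction of Theorem~\ref{nonembedding}. But step~2 as you state it cannot hold, and this is where the argument actually lives. You claim that for \emph{any} finitely many generators $a_{i_0},\ldots,a_{i_k}$ there are incompatible Cohen conditions forcing them into a chain, respectively an antichain. A single Cohen real makes only countably many binary choices; it cannot independently control the Boolean relations among each of the $\aleph_1$-many finite tuples of generators. In any morass-based construction the bit $r(n)$ can govern the relations among one fixed finite configuration $A_n$ (and hence among its morass-images), not among an arbitrary tuple selected after the fact from the embedding data.

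The paper's proof arranges things the other way round. It fixes in advance, for each $n$, a finite set $A_n\subseteq\theta_{\alpha_{n+1}}$ of ``fresh'' generators (disjoint from all earlier morass images, Lemma~\ref{enoughspace}), and lets the single bit $r(n)$ decide whether the elements of $A_n$ are pairwise disjoint or form a chain. The embedding $T$ is not handled by deciding each $T(\chi_{[a_i]})$ exactly---your claim that there are only countably many pairs $(p_i,\name{x}_i)$ is false, since the values $x_i$ range over a set of size $\aleph_1$---but by approximating through a fixed dense set $\{z_i:i<\omega_1\}\subseteq X_\ast$ in $\V$. This converts the embedding data into a function $j:\omega_1\to\omega_1$, which, because Cohen forcing is countable, agrees with a ground-model function $j_0$ on an uncountable ground-model set $A$. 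The contradiction is then obtained by a density argument: for any Cohen condition $p$ one finds $n\notin{\rm dom}(p)$ with $n\ge n_\ast^2$ and sets $p(n)$ so as to match the dichotomy $(\ast)$ for the tuple coming from $A_n$. The tuple is selected by the density argument, not by you; the Cohen real only has to set one undecided bit correctly, and density guarantees it will.

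So the missing idea is precisely this reversal of quantifiers: do not try to impose chain/antichain on a tuple chosen from the embedding data, but pre-plant test tuples $A_n$ whose fate is controlled by $r(n)$, and then argue by density that some $A_n$ produces the norm contradiction against the ground-model function $j_0$. Your acknowledged ``main obstacle'' is not a bookkeeping detail that transfers mechanically from Velleman's Souslin-tree argument; it is the heart of the proof, and it is resolved by this change of quantifier order together with the approximation-by-dense-set trick.
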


\begin{proof}  Let us recall
that a {\em simplified $(\omega,1)$-morass} is a system $\langle \theta_\alpha:\,\alpha\le\omega\rangle, \langle \FF_{\alpha,\beta}:\,\alpha<\beta\le\omega\rangle$ such that
\begin{enumerate}
\item for $\alpha<\omega$, $\theta_\alpha$ is a finite number $>0$, and $\theta_{\omega}=\omega_1$,
\item for $\alpha<\beta<\omega$, $\FF_{\alpha,\beta}$ is a
finite set of order preserving functions from $\theta_\alpha$ to $\theta_\beta$,
\item $\FF_{\alpha,\omega}$ is a set of order preserving functions from $\theta_\alpha$ to $\omega_1$ such that 
$\bigcup_{f\in \FF_{\alpha,\omega}} f^{``}\theta_\alpha=\omega_1$,
\item for all $\alpha<\beta<\gamma\le\omega$ we have that $\FF_{\alpha,\gamma}=\{f\circ g:\,g\in \FF_{\alpha,\beta}
\mbox{ and }f\in \FF_{\beta,\gamma}\}$,
\item $\FF_{\alpha,\alpha+1}$ always contains the identity function ${\rm id}_\alpha$ on $\theta_\alpha$ and
either this is all, or $\FF_{\alpha,\alpha+1}=\{{\rm id}_\alpha, h_\alpha\}$ for some $h_\alpha$ such that 
there is a {\em splitting point} $\beta$ with $h_\alpha\rest\beta={\rm id}_\alpha\rest\beta$ and
$h_\alpha(\beta)> \theta_\alpha$,
\item for every $\beta_0,\beta_1<\omega$ and $f_l\in \FF_{\beta_l,\omega}$ for $l<2$ there is $\gamma<\omega$
with $\beta_0,\beta_1<\gamma$, function $g\in \FF_{\gamma,\omega}$ and $f'_l\in \FF_{\beta_l,\gamma}$  such that $f_l
=g\circ f'_l$ for $l<2$.
\end{enumerate}
For future use in the proof we shall need the following:
\begin{observation}\label{increasing} The sequence $\langle \theta_\alpha\rangle_{\alpha<\omega}$ is a non-decreasing sequence diverging to $\infty$.
\end{observation}
To see this, notice that the fact that ${\rm id}_{\theta_\alpha}\in\FF_{\alpha,\alpha+1}$ guarantees that $\theta_\alpha\le
\theta_{\alpha+1}$. For the rest, suppose that $\langle \theta_{\alpha}\rangle_{\alpha<\omega}$
is bounded by some $k<\omega$. This implies that there is $\alpha<\omega$ such that for all $\gamma\ge \alpha$ we have
$\theta_{\gamma+1}=\theta_\gamma\le k$ and $\FF_{\gamma,\gamma+1}=\{{\rm id}_\gamma\}$. This implies by (4) that $\FF_{\alpha,\omega}=\{{\rm id}_\alpha\}$, in contradiction with (3). 

Fixing a simplified $(\omega,1)$-morass as in the above definitions,
we shall define a Boolean algebra $\alg$ as follows. By induction on $\alpha\le\omega$ we define a Boolean algebra 
$\alg_\alpha$ on a subset of $\theta_\alpha\cup (\omega_1\times\{0\})$ generated by  $\{i:\,i<\theta_\alpha\}$, and at the end we shall have $\alg=\alg_\omega$. The first requirement of the induction will be:
\begin{description}
\item[(i)] if $\beta<\alpha$ and $f\in \FF_{\beta,\alpha}$ then $f$ gives rise to a Boolean algebra embedding.
\end{description}
This requirement guarantees that the final object $\alg$ is indeed a well defined Boolean algebra. This is so because
$\{\alg_{\alpha}:\alpha\le\omega\}$ form an inverse system of Boolean algebras directed by the ordinal $\omega+1$, see 
\cite{Ha} for the details. For the main part of the proof,
suppose that we have Banach space $X_\ast$ in the ground model $\V$ with density $\aleph_1$ and a fixed dense set $\{z_i:\,i<\omega_1\}$ of $X_\ast$ in $\V$. 
We shall guarantee that for all natural numbers $n_*\ge 3$ and for all $j:\,\omega_1\to\omega_1$ there are $i_0, i_1 <\ldots < i_{n_*^2}$ such that:
\begin{equation*}\label{dichotomy}
\begin{split}
\mbox{ if }||z_{j(i_0)}+z_{j(i_1)}+\ldots z_{j(i_{n_*^2})}||<n_*-1&\mbox{, then } i_0, i_1,\ldots i_{n_*^2}\mbox{ are disjoint in }
\alg\mbox{ and }\\
 \mbox{ if }||z_{j(i_0)}+z_{j(i_1)}+\ldots z_{j(i_{n_*^2})}||\ge n_*-1 & \mbox{, then } i_0<_\alg  i_1\ldots <_\alg i_{n_*^2}. 
\end{split}\tag{$\ast$}
\end{equation*}
To see that our algebra, once constructed,
has the required properties, suppose that $T$ is an isomorphic embedding of $C({\rm St}(\alg))$ into some $X_\ast$ as above
and that $(\ast)$ holds for a dense set $\{z_i:\,i<\omega_1\}$ of $X_\ast$. Let
$x_i=T(\chi_{[i]})$ for $i<\omega_1$ and let $n_*\ge 3$ be large enough that for each $x\in C({\rm St}(\alg))$
we have that 
\[
\dfrac{1}{n_*}||x||< ||T(x)||<n_*||x||.
\]
For each $i<\omega_1$ let us choose $j(i)$ such that
$||x_i-z_{j(i)}||<\dfrac{1}{n_*^2+1}$. Let $i_0, i_1 <\ldots < i_{n_*^2}$ be as guaranteed by $(\ast)$. Suppose first that 
$||z_{j(i_0)}+z_{j(i_1)}+\ldots z_{j(i_{n_*^2})}||<n_*-1$. Then in the case $||z_{j(i_0)}+z_{j(i_1)}+\ldots z_{j(i_{n_*^2})}||<n_*-1$
we have that 
\[
||x_{i_0}+x_{i_1}+\ldots x_{i_{n_*^2}}||\le ||z_{j(i_0)}+z_{j(i_1)}+\ldots z_{j(i_{n_*^2})}||+\Sigma_{k\le n_*^2} ||x_{i_k}
-z_{j(i_k)}||< n_*-1+ \frac{n^2_*+1}{n^2_*+1}=n_*,
\]
yet $\dfrac{1}{n_*}||\chi_{[i_0]}+\ldots \chi_{[i_{n^2_\ast]}}||=\dfrac{n^2_\ast+1}{n_*}>n_\ast$, in contradiction with the choice
of $n_\ast$. The other case is similar.

Now we claim that to guarantee the condition $(\ast)$ for any fixed $X_\ast$ and $\{z_i:\,i<\omega_1\}$,
it suffices to assure that for all
\[
n_*\ge 3,
A\in [\omega_1]^{\omega_1}\in
\V , j_0:\,A\to\omega_1\in\V\mbox{  there are }i_0, i_1 <\ldots < i_{n_*^2}\in A \mbox{ exemplifying }(\ast).
\tag{$\ast\ast$}
\]
For this,
let us recall that for every $j:\,\omega_1\to\omega_1$ in $\V[G]$, there is $A\in [\omega_1]^{\omega_1}$ in 
$\V$ and $j_0:\,A\to\omega_1$ in $\V$ such that $j\rest A=j_0$. [Namely, if $\name{j}$ is a name for a function 
$j:\,\omega_1\to\omega_1$ then for every $\alpha<\omega_1$ there is $p\in G$ deciding the value $j(\alpha)$. Since the
forcing notion is countable, there is $p^\ast$ in $G$ such that the set $A$ of all $\alpha$ for which $p$ decides the
value of $\alpha$ is uncountable, and then it suffices to define $j_0$ on $A$ by $j_0(\alpha)=\beta$ iff $p$ forces
$j(\alpha)=\beta$.] In order to guarantee this, we first define by induction on $n$ an increasing sequence of natural numbers
$\alpha_n$ such that
$\alpha_0=0$ and $\theta_{\alpha_{n+1}}> 2\theta_{\alpha_n}+n$. In order to formulate
the second requirement of the inductive construction we need a lemma. 

\begin{lemma}\label{enoughspace} Let $n<\omega$. Then there is a set $A_n\subseteq \theta_{\alpha_{n+1}}$ of size $n+1$
such that for any $\beta\le \alpha_n$ and $f\in \FF_{\beta,\alpha_{n+1}}$, the intersection $A_n\cap f``\theta_\beta$
is empty.
\end{lemma}

\begin{Proof of the Lemma} Requirement (4) in the definition of the morass shows that it suffices to work with $\beta=\alpha_n$
as for $\beta<\alpha_n$ we have $\bigcup \{h``\theta_\beta:\,h\in \FF_{\beta,\alpha_{n+1}}\}\subseteq
 \{f``\theta_{\alpha_n}:\,f\in \FF_{\alpha_n,\alpha_{n+1}}\}$.
The proof proceeds on a backward induction on the size $k$ of $\alpha_{n+1}-\alpha_n\ge 1$. In the case $k=1$ we have that
$|\FF_{\alpha_n,\alpha_{n+1}}|\le 2$ and for each $f\in \FF_{\alpha_n,\alpha_{n+1}}$ we have $|f``\theta_{\alpha_n}|\le \theta_{\alpha_n}$ and therefore $\theta_{\alpha_{n+1}}\setminus |\bigcup_{f\in \FF_{\alpha_n,\alpha_{n+1}}}f``\theta_{\alpha_n}|\ge \theta_{\alpha_{n+1}}-
2\theta_{\alpha_n}>n$, so we can take
$A=\theta_{\alpha_{n+1}}\setminus \bigcup_{f\in \FF_{\alpha_n,\alpha_{n+1}}}f``\theta_{\alpha_n}$ . For the step $k+1$, let $\beta=\alpha_n+1$ and then it suffices to again apply (4) from the definition of a morass.
$\eop_{\ref{enoughspace}}$
\end{Proof of the Lemma}

Our second requirement of the induction will be as follows, where $r$ is
the generic Cohen real, viewed as a function from $\omega$ to 2 :
\begin{description}
\item[(ii)] Let $n<\omega$ and suppose that $\alg_{\alpha_n}$ has been defined.
Then $\alg_{\alpha_{n+1}}$ is generated freely over $\alg_{\alpha_n}$ except for the equations induced by requirement 
{\bf (i)} and the requirement that  $i_0, i_1,\ldots i_{n}\mbox{ are disjoint in }
\alg_{\alpha_{n+1}}$ if $r(n)=0$ and  $i_0 <_{\alg_{\alpha_{n+1}}}  i_1 <_{\alg_{\alpha_{n+1}}}\ldots <_{\alg_{\alpha_{n+1}}} i_{n}$ if 
$r(n)=1$, where $\{i_0, i_1, \ldots ,i_{n}\}$ is the increasing enumeration of the first $n+1$ elements of $A_n$. For $\beta\in (\alpha_n,\alpha_{n+1})$
we let $\alg_\beta$ be the Boolean span in $\alg_{\alpha_{n+1}}$ of $\theta_\beta$.
\end{description}
Given Lemma \ref{enoughspace}, it is clear that conditions {\bf (i)} and {\bf (ii)} can be met in a simple inductive construction of $\alg_a$, as there will be no possible contradiction between {\bf (i)} and the requirements that {\bf (ii)} puts on the elements of
$A_n$.  Let us now show that the resulting algebra is as required. 
Hence let us fix $n_\ast, X_\ast, \{z_i:\,i<\omega_1\}, A$ and $j_0$ as in $(\ast\ast)$. Is it is then sufficient to observe that
the following set is dense in the Cohen forcing (note that the set is in the ground model):
\[
\left\{p:\,(\exists n\in {\rm dom}(p))(n \ge  n_*^2\mbox{ and } p(n)=0\iff ||z_{j(i_0)}+z_{j(i_1)}+\ldots z_{j(i_{n_*^2})}||<n_*-1\right\},
\]
where $\{i_0, i_1, \ldots ,i_{n^2_*}\}$ is the increasing enumeration of the first $n_*^2+1$ elements of $A_n$.
$\eop_{\ref{Cohenreal}}$
\end{proof}

We note in the following Theorem \ref{Cohenrealc} that the construction from Theorem \ref{Cohenreal} can be refined so that the resulting
Banach space is weakly compactly generated. This give rise to Theorem \ref{wcgfinal} which shows that in the Cohen model
for $\aleph_2$ Cohen reals the universality number for wcg Banach spaces of density $\aleph_1$ is $\aleph_2$. A model
for this was obtained by Brech and Koszmider in \cite{BrKo2} using a ready-made forcing. Theorem \ref{wcgfinal} answers a question
raised by Koszmider (private communication). The reader not interested in wcg spaces can jump directly to after the
proof of Theorem \ref{Cohenrealc}, those who proceed to read Theorem \ref{Cohenrealc} will be assumed to know what wcg spaces are. Let us however recall one definition:

\begin{definition}\label{embeddingBell}
A subset $C$ of a
Boolean algebra $\alg$ has {\em the nice property}
if for no finite $F\subseteq C$ do we have $\bigvee F=1$.
A Boolean algebra $\alg$ is a c-{\em algebra} iff there is
a family $\{ B_n:\,n<\omega\}$ of pairwise disjoint antichains of $\alg$
whose union
has the nice property and generates $\alg$. 
\end{definition}

Bell showed in \cite{bell} that the Stone space of a c-algebra $\alg$ is a uniform Eberlein compact and hence $C({\rm St}(\alg))$
is a wcg space, see \cite{BrKo2} for details. We prove:

\begin{theorem}\label{Cohenrealc} 
Forcing with one Cohen real adds a c-algebra $\alg$ of size $\aleph_1$ such that
in the extension the Banach space $C({\rm St}(\alg))$ does not isomorphically embed into any space $C({\rm St}(\mathfrak B))$,
where $\mathfrak B$ is any Boolean algebra in
the ground model of size $\aleph_1$.
\end{theorem}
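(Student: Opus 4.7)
The approach is to refine the construction of $\alg$ in the proof of Theorem \ref{Cohenreal} so that the resulting Boolean algebra also becomes a c-algebra in the sense of Definition \ref{embeddingBell}. By Bell's theorem, this will guarantee that ${\rm St}(\alg)$ is a uniform Eberlein compact, hence that $C({\rm St}(\alg))$ is wcg. The non-embedding part of the statement is equivalent, via Fact \ref{classicalf}, to non-embeddability of $C({\rm St}(\alg))$ into any ground model Banach space of density $\aleph_1$, and so will follow from essentially the same forcing argument as in Theorem \ref{Cohenreal}.

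Two modifications to the earlier construction are needed. First, in the dichotomy of requirement (ii), replace the ``chain'' case by an ``independence'' case: when $r(n)=1$, impose no additional relations among $i_0, i_1, \ldots, i_n$, so that they are freely generated in $\alg_{\alpha_{n+1}}$ over $\alg_{\alpha_n}$. This is the crucial change for the c-algebra property, since any comparability between two generators would preclude their membership in a common antichain. Second, define morass-coherent type functions $t_\alpha : \theta_\alpha \to \omega$ for each $\alpha \le \omega$ satisfying $t_\beta \circ f = t_\alpha$ for all $f \in \FF_{\alpha,\beta}$, arranging that each of the resulting $\omega$ many type classes $B_n := t_\omega^{-1}(n)$ has cardinality $\omega_1$; then additionally impose $i \wedge j = 0$ whenever $i \neq j$ lie in the same $B_n$. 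Since $|A_n| = n+1$ is finite while types range over $\omega$, the elements of $A_n$ can be chosen to have pairwise distinct types, so that neither case of the modified dichotomy conflicts with the type-based antichain equations.

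With these modifications in place, the sets $\{B_n : n < \omega\}$ form a family of pairwise disjoint antichains of $\alg$ whose union generates $\alg$. Moreover, none of the imposed equations ever forces a relation of the form $\bigvee F = 1$ for finite $F \subseteq \bigcup_n B_n$, so the ``all-negative'' element $\bigwedge_{i \in F}\neg i$ remains non-zero in the quotient, witnessing the nice property. Hence $\alg$ is a c-algebra. For the non-embedding conclusion, the only new thing to verify is that the independence case of the dichotomy still yields $\|\chi_{[i_0]}+\cdots+\chi_{[i_{n_*^2}]}\| = n_*^2+1$: when $i_0, \ldots, i_{n_*^2}$ are independent in $\alg$, the atom $i_0 \wedge \cdots \wedge i_{n_*^2}$ of the free sub-algebra they generate is non-zero, so there is an ultrafilter of $\alg$ containing all the $i_k$'s, and the corresponding point of ${\rm St}(\alg)$ witnesses value $n_*^2+1$ for the sum of the characteristic functions. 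The disjoint case still yields norm $1$, and the remainder of the argument is unchanged.

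The main technical obstacle is the simultaneous consistency, at each inductive step, of the morass-inherited equations, the dichotomy equations, and the type-based antichain equations. Since all three kinds of equations are ``positive'' in that they concern meets being zero or free generation, and never joins being $1$, and since only finitely many generators are involved at each finite level $\alg_\alpha$, the free Boolean algebra on the generators of $\alg_{\alpha_{n+1}}$ modulo the imposed equations remains non-trivial and carries the desired structure; moreover, morass-coherence of $t$ ensures that the antichain equations propagate correctly to $\alg_\omega$ via condition (6) of the morass. The morass-coherent type function is constructed inductively: at each successor step $\alpha+1$ with non-trivial $h_\alpha$, one extends $t_\alpha$ to $t_{\alpha+1}$ by setting $t_{\alpha+1}(h_\alpha(i)) := t_\alpha(i)$ on the image of $h_\alpha$ and choosing types freely on the remaining new generators, with the bookkeeping arranged so that every $t_\omega^{-1}(n)$ has cardinality $\omega_1$ at level $\omega$.
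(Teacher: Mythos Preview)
Your approach is genuinely different from the paper's, and while the core idea is sound, there is a technical gap and a misconception worth noting.

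\textbf{The paper does not abandon the chain case.} Your stated reason for replacing the chain alternative by independence---that ``any comparability between two generators would preclude their membership in a common antichain''---is not a real obstruction. The paper keeps the dichotomy of Theorem~\ref{Cohenreal} intact: when $r(n)=1$ the elements $i_0<_{\alg}\ldots<_{\alg} i_n$ still form a chain, but each is placed in its \emph{own} new antichain (one sets $n_{\alpha_{n+1}}=n_{\alpha_n}+|A_n|$); when $r(n)=0$ they all go into a single new antichain. So the c-algebra structure is recorded step by step, without any global type function, and the norm dichotomy is exactly as before.

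\textbf{Your independence alternative does give the right norm}, since a point of ${\rm St}(\alg)$ lying in $\bigcap_k [i_k]$ witnesses $\|\sum_k\chi_{[i_k]}\|=n_\ast^2+1$. So that part of your modification is harmless.

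\textbf{The gap is in ``distinct types for $A_n$''.} You assert that the elements of $A_n$ can be chosen with pairwise distinct types because $|A_n|=n+1$ is finite while types range over $\omega$. But this is not merely a cardinality matter: morass-coherence of $t$ forces $t(i)=t(i')$ whenever $i$ and $i'$ are related by a morass map, and Lemma~\ref{enoughspace} only guarantees that $A_n$ avoids images of $\FF_{\beta,\alpha_{n+1}}$ for $\beta\le\alpha_n$. Two elements of $A_n$ may still be linked by some $h_\gamma$ with $\alpha_n<\gamma<\alpha_{n+1}$, in which case their types are forced equal and your antichain equation makes them disjoint---destroying the independence you need in the $r(n)=1$ case. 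To repair this you would have to choose $A_n$ to consist of elements that are brand new at level $\alpha_{n+1}$ (not in the image of any $f\in\FF_{\gamma,\alpha_{n+1}}$ for any $\gamma<\alpha_{n+1}$), and this requires a sharper growth condition on the $\theta_\alpha$'s than the one used to define the $\alpha_n$'s. This is likely fixable by a more careful choice of $\alpha_{n+1}$, but it is not the triviality your ``bookkeeping'' remark suggests. (Incidentally, the requirement that each $B_n$ have cardinality $\aleph_1$ is unnecessary for the c-algebra definition and can be dropped.)

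The paper's device---deciding the antichain assignment of $A_n$ directly at stage $\alpha_{n+1}$ according to $r(n)$, and defining the intermediate $\alg_\beta$ retroactively as Boolean spans---sidesteps this coherence issue more cleanly than a globally coherent type function does.
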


\begin{proof}
Fixing a simplified $(\omega,1)$-morass as in the proof of Theorem\ref{Cohenreal},
we shall define a Boolean algebra $\alg$ as follows. By induction on $\alpha\le\omega$ we define a Boolean algebra 
$\alg_\alpha$ on a subset of $\theta_\alpha\cup (\omega_1\times\{0\})$ generated by  $\{i:\,i<\theta_\alpha\}$ freely except for the
requirements in the next sentence, and at the end we shall have $\alg=\alg_\omega$. At the same time we shall define pairwise disjoint antichains $\langle B^n_\alpha:\,n<n_\alpha\rangle$ such that
$\bigcup_{n<n_\alpha}B^n_\alpha=\{i:\,i<\theta_\alpha\}$ and this set has the nice property. 
The basic requirements of the induction will be:
\begin{description}
\item[(i)] if $\beta<\alpha$ and $f\in \FF_{\beta,\alpha}$ then $f$ gives rise to a Boolean algebra embedding.
\item[(ii)] if $\beta<\alpha$ then $\theta_\alpha<\theta_\beta\implies n_\beta< n_\alpha$ and if $f\in \FF_{\beta,\alpha}$ and $i\in B^n_\alpha$ then $f(i)\in B^n_\beta$.
\end{description}
Requirement (i) guarantees that the final object $\alg$ is indeed a well defined Boolean algebra. The second requirement
guarantees that the algebra obtained is a c-algebra, as will become clear later. As in the proof of Theorem \ref{Cohenreal}
and using the same notation as there, we shall guarantee the requirement $(\ast\ast)$.

Our final requirement of the induction will be as follows, where $r$ is
the generic Cohen real, viewed as a function from $\omega$ to 2 :
\begin{description}
\item[(iii)] Let $n<\omega$ and suppose that $\alg_{\alpha_n}$ has been defined.
Then $\alg_{\alpha_{n+1}}$ is generated freely over $\alg_{\alpha_n}$ except for the equations induced by requirements
{\bf (i)} and {\bf (ii)} and the requirement that  $i_0, i_1,\ldots i_{n}\mbox{ are disjoint in }
\alg_{\alpha_{n+1}}$ if $r(n)=0$ and  $i_0 <_{\alg_{\alpha_{n+1}}}  i_1 <_{\alg_{\alpha_{n+1}}}\ldots <_{\alg_{\alpha_{n+1}}} i_{n}$ if 
$r(n)=1$, where $\{i_0, i_1, \ldots ,i_{n}\}$ is the increasing enumeration of the first $n+1$ elements of $A_n$. For $\beta\in (\alpha_n,\alpha_{n+1})$
we let $\alg_\beta$ be the Boolean span in $\alg_{\alpha_{n+1}}$ of $\theta_\beta$.
\end{description}
Given Lemma \ref{enoughspace}, it is clear that conditions {\bf (i)} and {\bf (ii)} can be met in a simple inductive construction of $\alg_a$, as there will be no possible contradiction between {\bf (i)} and the requirements that {\bf (ii)} puts on the elements of
$A_n$.  Let us comment on how to meet the requirement {\bf (iii)}, say at stage $n+1$. We have chosen $A_n$ so that it elements
are not in the image of any $B^m_\beta$ for any $\beta<\alpha_{n+1}$ and any $m<n_\beta$. Therefore we are free to 
either set $n_{\alpha_{n+1}}=n_\alpha+ 1$ and put all elements of $A_n$ into one antichain, if $r(n)=0$, or
set $n_{\alpha_{n+1}}=n_\alpha+ |A_n|$ and put each element of $A_n$ into a new antichain, requiring $i_0 <_{\alg_{\alpha_{n+1}}}  i_1 <_{\alg_{\alpha_{n+1}}}\ldots <_{\alg_{\alpha_{n+1}}} i_{n}$ if 
$r(n)=1$. There are no problems in generating $\alg_{\alpha+1}$ freely except for these requirements and those
dictated by (i)-(iii) as the requirements are not contradictory.

\begin{lemma}\label{c} Suppose that the requirements (i)-(iii)  from above are satisfied. Then
letting for $n<\omega$, $B^n=\bigcup_{\alpha\le\omega, n<n^\alpha}B^n_\alpha$, we obtain disjoint antichains which
have the nice property in $\alg$ and $\alg$ is generated by their union.
\end{lemma}

\begin{proof} (of the Lemma) As in the proof of Theorem \ref{Cohenreal}, (i) guarantees that $\alg$ is a Boolean algebra. Requirement (ii) guarantees that $B_n$s are disjoint antichains. The set $B=\bigcup_{n<\omega} B_n$
has the nice property since
this property is verified by finite subsets of $B_n$. In other words, 
if $B$ does not have the nice property, then there is a finite $F\subseteq B$ with $\vee F=1$. There must be $\alpha<\omega$ such that 
$F\subseteq \bigcup_{n<n_\alpha} B^n_\alpha$, so $\vee F=1$ already holds in $\alg_\alpha$. However, this is in contradictory
to the fact that $\alg_\alpha$ is generated freely except for the equations required in (i)-(iii). It is also clear $B$ generates $\alg$. 
$\eop_{\ref{c}}$
\end{proof} 
The proof that $(\ast\ast)$ holds is exactly the same as in the proof of Theorem \ref{Cohenreal}. 
$\eop_{\ref{Cohenrealc}}$
\end{proof}

We would now like to extend Theorems \ref{nonembedding} and \ref{Cohenreal} 
by using ``classical tricks" with iterations to obtain the non-existence of an isomorphically universal
Banach space of density $\lambda^+$ in an iterated extension. It turns out that
the cases $\lambda=\aleph_0$ and larger $\lambda$ are different. Let us consider the former case first.

{\bf Scenario.}
Let ${\mathbb P}$ denote ${\mathbb P}(\omega_1)$ and let ${\mathbb Q}$ be the iteration of $\omega_2$ steps of ${\mathbb P}$, say  over a model of GCH, or simply the classical extension to add $\aleph_2$ Cohen reals over a model
of $GCH$. Then in the extension by
${\mathbb Q}$ we have that $2^{\aleph_0}=\aleph_2$ and the new reals are added throughout the iteration.
We try to argue that the universality number for the class of Banach spaces of density $\aleph_1$ is $\aleph_2$, so 
we try for a contradiction. Suppose that the number is $\le\aleph_1$, then by Observation \ref{gap} there is actually a single universal element, say $X_\ast$. We can by Fact \ref{classicalf} assume that
$X_\ast=C({\rm St}(\alg_\ast))$ for some Boolean algebra $\alg_\ast$ of size
$\aleph_1$. By standard arguments, we can
find an intermediate universe in the iteration, call it ${\bf V}$, which contains
$\alg_\ast$. If we could say that $C({\rm St}(\alg_\ast))$ is also in ${\bf V}$, then we could apply Theorem
\ref{nonembedding} to conclude that for the generic Boolean algebra ${\mathfrak B}$ added to ${\bf V}$,
$C({\rm St}({\mathfrak B}))$ does not embed into $C({\rm St}(\alg_\ast))$ and we would be done. The problem
is that since we keep adding reals, $C({\rm St}(\alg_\ast))$ keeps changing from ${\bf V}$ to the final universe and
hence there is no contradiction.We resolve this difficulty by a use of simple functions with rational coefficients. \footnote{A preliminary
version of this proof was stated for isometric embeddings and upon hearing about it, Saharon Shelah suggested that
it should work for the isomorphic embeddings as well.}. 

\begin{theorem}\label{isometries} Let $\kappa$ be a cardinal with $\cf(\kappa)\ge\aleph_2$ and let $G$ be a generic for the iteration of length $\kappa$ with finite supports of the forcing to add the generic Boolean algebra
of size $\aleph_1$ by finite
conditions over a model $\V$ of GCH, or the forcing to add $\kappa$ many Cohen reals over a model $\V$ of GCH. Then in $\V[G]$ the universality number of the class of Banach spaces
of density $\aleph_1$ under {\em isomorphisms} is $\kappa$.
\end{theorem}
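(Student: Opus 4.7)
The plan is to establish both bounds separately. For the upper bound, the iteration is ccc with finite supports and $\cf(\kappa)\ge\aleph_2>\aleph_1$, so every nice name for a subset of $\omega_1$ is supported by a bounded initial segment of the iteration, giving $2^{\aleph_1}\le\kappa$ in $\V[G]$; hence there are at most $\kappa$ Boolean algebras of size $\aleph_1$ up to isomorphism, and by Fact \ref{classicalf} the family of all corresponding $C({\rm St}(\alg))$'s is already isometrically, hence isomorphically, universal.

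For the lower bound, suppose toward contradiction that $\{X_\beta:\beta<\mu\}$ is a universal family of size $\mu<\kappa$. By Fact \ref{classicalf} we may take $X_\beta=C({\rm St}(\alg_\beta))$ for Boolean algebras $\alg_\beta$ of size $\aleph_1$. The union of the supports of the nice names for the $\alg_\beta$ has cardinality at most $\mu\cdot\aleph_1<\kappa$ and, by $\cf(\kappa)\ge\aleph_2$, is contained in some $\alpha^*<\kappa$; thus every $\alg_\beta$, and every dense set $D_\beta$ of rational simple functions on ${\rm St}(\alg_\beta)$, lies in $\V[G_{\alpha^*}]$. In the Boolean algebra iteration, let $\alg$ be the generic algebra added at stage $\alpha^*$; in the Cohen case, let $\alg$ be the algebra of size $\aleph_1$ built from the $\alpha^*$-th Cohen real by the morass procedure of Theorem \ref{Cohenreal}. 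By universality, there exist $\beta^*<\mu$, $c\ge 1$, and an isomorphic embedding $T:C({\rm St}(\alg))\to C({\rm St}(\alg_{\beta^*}))$ of constant $c$ in $\V[G]$; fix an integer $n_*>c$.

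The key idea, as suggested in the discussion preceding the theorem, is that although $T$ need not reflect to $\V[G_{\alpha^*}]$ and $C({\rm St}(\alg_{\beta^*}))$ itself acquires new continuous functions along the iteration, the action of $T$ can be tracked through rational simple functions in $D_{\beta^*}\subseteq \V[G_{\alpha^*}]$, whose sup norms are absolute between $\V[G_{\alpha^*}]$ and $\V[G]$. Fix $\epsilon$ with $(n_*^2+1)\epsilon<1/(2n_*)$. For each $i<\omega_1$, by density of $D_{\beta^*}$ choose $y_i\in D_{\beta^*}$ and a condition $p_i$ in the full iteration (below a master condition forcing $T$ to be a $c$-isomorphism) which forces both $a_i\in w_{p_i(\alpha^*)}$ and $||T(\chi_{[a_i]})-y_i||<\epsilon$. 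Apply the $\aleph_1$-stationary cc of the single-step forcing from Lemma \ref{properties}(1) to the $\alpha^*$-th coordinates $p_i(\alpha^*)$, and then ccc of the whole iteration to thin again, reaching an uncountable subfamily along which the sets $w_{p_i(\alpha^*)}$ form a $\Delta$-system with root $w^*$ avoiding every $a_i$, the $p_i(\alpha^*)$ agree on $w^*$, and the $p_i$ are pairwise compatible in the iteration.

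Pick distinct $i_0,\ldots,i_{n_*^2}$ from this subfamily and, using pairwise compatibility of the $p_{i_k}$ together with the coordinate-wise common-extension property of the Boolean algebra forcing, build two common extensions $p',p''$ of $p_{i_0},\ldots,p_{i_{n_*^2}}$ that agree outside coordinate $\alpha^*$ and whose $\alpha^*$-th coordinates respectively impose $a_{i_0}\le\ldots\le a_{i_{n_*^2}}$ in $\alg$ and pairwise disjointness of the $a_{i_k}$. Under $p'$ we have $||\chi_{[a_{i_0}]}+\ldots+\chi_{[a_{i_{n_*^2}}]}||=n_*^2+1$ so that $||\sum_k y_{i_k}||>(n_*^2+1)/c-(n_*^2+1)\epsilon>n_*+1/(2n_*)$, while under $p''$ the same sum has norm $1$, giving $||\sum_k y_{i_k}||<c+(n_*^2+1)\epsilon<n_*+1/(2n_*)$. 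But $\sum_k y_{i_k}$ is a single element of $\V[G_{\alpha^*}]$ whose sup norm depends only on the finite Boolean subalgebra of $\alg_{\beta^*}$ generated by its supports, and so is absolute between $\V[G_{\alpha^*}]$ and $\V[G]$; the two forced bounds therefore contradict each other. The principal obstacle is exactly the nonabsoluteness of the codomain along the iteration, and the rational-simple-function approximation is what defuses it.
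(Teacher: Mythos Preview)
Your proof is correct and follows the same strategy as the paper's: approximate each $T(\chi_{[a_i]})$ by a rational simple function $y_i$ on ${\rm St}(\alg_{\beta^*})$ (which lies in the intermediate model and has absolute sup norm), thin the conditions $p_i$ by a $\Delta$-system, and then exploit the freedom at coordinate $\alpha^*$ to produce two extensions forcing contradictory bounds on the single ground-model quantity $\|\sum_k y_{i_k}\|$. Your write-up is in fact more complete than the paper's: you treat the upper bound $2^{\aleph_1}\le\kappa$, handle the general $\mu<\kappa$ case rather than only ``no single universal'', and make explicit why $\|\sum_k y_{i_k}\|$ is computed once and for all in $\V[G_{\alpha^*}]$. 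Your choice of $\varepsilon$ and the ensuing arithmetic are also cleaner than the paper's.

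One caveat concerns the Cohen-real alternative. Having taken $\alg$ to be the morass-built algebra of Theorem~\ref{Cohenreal}, the clauses ``$a_i\in w_{p_i(\alpha^*)}$'' and ``$p'(\alpha^*),p''(\alpha^*)$ impose $a_{i_0}\le\ldots\le a_{i_{n_*^2}}$ versus disjointness'' do not parse as written: here $p_i(\alpha^*)$ is a condition in $Fn(\omega,2)$, and in the morass construction the relations among generators are governed by specific bits of the single Cohen real at morass-determined positions (the sets $A_n$), not freely on an arbitrary tuple coming from a $\Delta$-system. The paper's own proof has the same slip (it writes ``$a_i\in{\rm dom}(p_i(0))$'', which is $\mathbb P(\omega)$-notation), so you are in good company. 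A clean way to repair the Cohen case is either to code a $\mathbb P(\omega)$-generic inside a block of $\aleph_1$ Cohen reals at stage $\alpha^*$ (after which your two-extension argument applies verbatim), or to rerun the density argument at the end of Theorem~\ref{Cohenreal} against the approximation function $i\mapsto y_i$ after first reflecting it, via ccc of the tail iteration, to a function in $\V[G_{\alpha^*}]$ on an uncountable set.
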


\begin{proof} For simplicity, let us show that there is no single universal element, the proof in the case of $<\kappa$ universals being very similar. Suppose that $C({\rm St}(\alg))$ is a universal Banach space of density $\aleph_1$
and without loss of generality, assume that $\alg$ is in the ground model.
Let $p^\ast$ force $\dot{T}$ to be an isomorphic embedding from the $C$ of the Stone space of
the first generic algebra to $C({\rm St}(\alg))$ with $||T||\le c< n_*$.  Let $\varepsilon>0$ be small enough, precisely $\varepsilon<\min\left\{\dfrac{n_*-c}{n*},
\dfrac{n^2_*+1-cn_*}{cn_*^2+1}\right\}$.
For $i<\omega_1$ let $p_i\ge p^\ast$
force that $h_i$ is a simple function with rational coefficients  (so $h_i\in \V$) satisfying $||\dot{T}(\chi_{[a_i]})- h_i||<\varepsilon$.
We proceed similarly to the proof of Theorem \ref{nonembedding}, and without loss of generality assume that $a_i\in {\rm dom}(p_i(0))$, for every $i$. Now 
do several $\Delta$-system arguments as standard, and take ``clean"  $i_0< i_1 <\ldots i_{n_*^2}$, hence $\bigcup_{k=0}^{n^2_*} p_{i_k}$ does not decide any relation between $a_{i_k}$s.  
If $||\Sigma_{k=0}^{n^2_*}h_{i_k}||\ge n^*$, it follows by the choice of $\varepsilon$ that there cannot
be an extension of $\bigcup_{k=0}^{n^2_*} p_{i_k}$ forcing $a_{i_k}$s and to be disjoint, a contradiction, and if  $||\Sigma_{k=0}^{n^2_*}h_{i_k}||<n^*$
then it follows that there cannot
be an extension of $\bigcup_{k=0}^{n^2_*} p_{i_k}$ forcing $a_{i_k}$s to be increasing with $k$, again a contradiction.
$\eop_{\ref{isometries}}$
\end{proof}

In the situation in which we add a generic Boolean algebra of cardinality $\lambda^+\ge\aleph_2$ with
conditions of size $<\lambda$, the forcing is countably closed and therefore it does not add any new reals. Hence the attempted
argument of proving the analogue of Theorem  \ref{isometries} for isomorphic embeddings works in this case.
However, to make this into a theorem we need an iteration theorem for the forcing, in order to make sure that the cardinals are preserved. Paper \cite{5authorsforcing} reviews various known forcing axioms and building up on them proves the following Theorem \ref{5authors}, which is perfectly suitable for our purposes.

\begin{theorem} [Cummings, D\v zamonja, Magidor, Morgan and Shelah] \label{5authors} Let $\lambda=\lambda^{<\lambda}$. Then, the iterations with $(<\lambda)$-supports
 of $(<\lambda)$-closed 
stationary $\lambda^+$-cc forcing which is countably parallel-closed, are $(<\lambda)$-closed 
stationary $\lambda^+$-cc.
\end{theorem}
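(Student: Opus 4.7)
The plan is to proceed by induction on the length $\delta$ of the iteration $\langle P_\alpha, \dot{Q}_\alpha : \alpha < \delta \rangle$, simultaneously maintaining all three inductive properties at once: $(<\lambda)$-directed closure, stationary $\lambda^+$-cc, and countable parallel-closure. Preserving the three in tandem is essential, since each is used in the preservation proof of the others.

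First I would dispose of $(<\lambda)$-directed closure, which is the lightest piece. At successor stages it follows from the standard two-step composition lemma applied to the iterand. At limit stages, given a directed $Q \subseteq P_\delta$ of size $\mu < \lambda$, regularity of $\lambda$ (a consequence of $\lambda = \lambda^{<\lambda}$) ensures that $S = \bigcup_{p \in Q} \supp(p)$ still has size $< \lambda$; one then builds a common upper bound by transfinite recursion along $S$, invoking the inductive hypothesis coordinate by coordinate, using the forced $(<\lambda)$-closure of each $\dot Q_\alpha$ to take upper bounds of the $\alpha$-coordinates.

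Next I would address stationary $\lambda^+$-cc. Given $\{p_i : i < \lambda^+\}$ in $P_\delta$, the first step is a $\Delta$-system reduction on the supports $w_i = \supp(p_i)$, which is available because $\lambda^{<\lambda} = \lambda$ bounds the number of possible isomorphism types of supports of size $<\lambda$. On a suitable club $C \subseteq \lambda^+$, for $i \in C$ of cofinality $\lambda$ the tail $w_i \setminus w^*$ sits below $i$ and is disjoint from $w_j$ for $j \in C$ below $i$. The regressive function $f$ codes the isomorphism type of $w_i$ relative to the root $w^*$ together with, recursively, the stationary-cc coding data of $p_i$ evaluated at each coordinate in $w_i \cap i$. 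If $f(i) = f(j)$ with $i < j$, compatibility of $p_i$ and $p_j$ is then built up along the common support: at successor coordinates one invokes the stationary $\lambda^+$-cc of $\dot Q_\alpha$, while at limit coordinates of uncountable cofinality one uses the inductive hypothesis together with $(<\lambda)$-closure below.

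The main obstacle, and the reason countable parallel-closure appears in the hypotheses, is the case of a limit coordinate in the support having cofinality $\omega$. At such a coordinate one has a countable sequence of local compatibility witnesses accumulated from below, and one must weave them into a single condition extending both $p_i$ and $p_j$ at that coordinate. Countable parallel-closure is precisely the property asserting that countably many parallel pairs of compatible conditions admit a uniformly chosen sequence of common extensions; without it, the standard stitching argument breaks down exactly here. Finally, preservation of countable parallel-closure itself must be carried along in the induction: successors are immediate from the assumption on the iterand, and limits follow from the $(<\lambda)$-closure of the base together with the inductive parallel-closure, which together allow countable chains of witness sequences to be dominated coordinatewise inside the iteration.
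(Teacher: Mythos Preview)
The paper does not contain a proof of this theorem: it is quoted as a result from the companion paper \cite{5authorsforcing} (Cummings, D\v zamonja, Magidor, Morgan and Shelah), and is used here as a black box to justify that the iteration in Theorem~\ref{isomorphisms} preserves cardinals. So there is nothing in the present paper to compare your proposal against.

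As for the proposal itself: your overall architecture is the right one --- simultaneous induction on the length of the iteration, carrying all three properties together, with countable parallel-closure invoked precisely to handle the amalgamation problem at coordinates of cofinality $\omega$ in the stationary $\lambda^+$-cc argument. That is indeed where the new hypothesis does its work. However, the sketch is too impressionistic at the decisive step: you speak of a regressive function that ``codes \ldots\ recursively, the stationary-cc coding data of $p_i$ evaluated at each coordinate,'' but the actual content of stationary $\lambda^+$-cc is only that \emph{some} club and regressive function exist for each antichain-candidate, not that there is a uniform coding scheme one can fold into a single regressive map across all coordinates simultaneously. Making this precise --- producing, for each coordinate, names for the club and regressive function promised by the iterand, and then showing these can be amalgamated into a single regressive function on $\lambda^+$ whose fibres guarantee compatibility in the full iteration --- is exactly the technical heart of the argument, and your description does not supply it. Similarly, at cofinality-$\omega$ coordinates you say one ``has a countable sequence of local compatibility witnesses \ldots\ and one must weave them into a single condition,'' but you do not explain how the two $\omega$-sequences of partial upper bounds are arranged to be \emph{pointwise compatible} (which is what parallel-closure requires as input), nor how the output of parallel-closure is then used to continue the recursion. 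These are the places where a referee would ask you to write out the details; the rest of your outline is sound.
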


Here, the property of countable parallel-closure is defined as follows:

\begin{definition}\label{parallel-closure} Two increasing sequences $\langle p_i:\,i<\omega\rangle$ and 
$\langle q_i:\,i<\omega\rangle$ of conditions in a forcing $\mathbb P$
are said to be {\em pointwise compatible} if for each $i<\omega$ the conditions $p_i, q_i$ are compatible.
The forcing $\mathbb P$ is said to be {\em countably parallel-closed} if for every two $\omega$-sequences of pointwise
compatible conditions as above, there is a common upper bound to $\{p_i, q_i:\,i<\omega\}$ in $\mathbb P$.
\end{definition}

\begin{lemma}\label{newmet} The forcing ${\mathbb P}(\lambda)$ is countably parallel-closed.
\end{lemma}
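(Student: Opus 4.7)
The plan is to reduce countable parallel-closure to the $(<\lambda)$-directed closure already established in Lemma \ref{properties}(1). Given two increasing $\omega$-sequences $\langle p_i\rangle$ and $\langle q_i\rangle$ of pointwise compatible conditions, I would consider the combined family $F=\{p_i,q_i:\,i<\omega\}$ and show that $F$ is a directed subset of ${\mathbb P}(\lambda)$ of size $\omega$. Since this lemma is applied inside Theorem \ref{5authors} with $\lambda$ uncountable (the case $\lambda=\aleph_0$ is not relevant, since ${\mathbb P}(\aleph_0)$ has finite conditions and is not countably parallel-closed), the size count $\omega<\lambda$ places $F$ in the hypothesis of Lemma \ref{properties}(1), yielding a single upper bound $r\in{\mathbb P}(\lambda)$. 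Such an $r$ is by construction above every $p_i$ and every $q_i$, so it witnesses the countable parallel-closure required in Definition \ref{parallel-closure}.

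Verifying directedness of $F$ is the only real step and is short. For two elements of the same kind, the monotonicity of the ambient sequence already exhibits a bound inside $F$ itself, namely $p_{\max(i,j)}$ or $q_{\max(i,j)}$. For a mixed pair $p_i,q_j$, setting $N=\max(i,j)$ one has $p_i\le p_N$ and $q_j\le q_N$; the pointwise compatibility hypothesis at index $N$ then supplies some $s\in{\mathbb P}(\lambda)$ above both $p_N$ and $q_N$, and hence above $p_i$ and $q_j$. The witness $s$ need not lie in $F$, but directedness only requires the upper bound to lie in the ambient forcing.

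I do not expect any obstacle of substance. The advantage of this route is that the actual amalgamation work — checking that the Boolean-algebra equations imposed by all of the $p_i$'s and $q_i$'s are jointly realisable over $\bigcup_{i}w_{p_i}\cup\bigcup_i w_{q_i}$ — has already been performed once and for all inside the proof of Lemma \ref{properties}(1) and is simply inherited here, so that the present lemma is essentially a repackaging of directed closure adapted to the specific shape of hypotheses needed in Theorem \ref{5authors}.
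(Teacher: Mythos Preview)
Your argument is correct. The route differs slightly from the paper's: you reduce to the $(<\lambda)$-directed closure of Lemma~\ref{properties}(1) by verifying that the combined family $\{p_i,q_i:i<\omega\}$ is directed in ${\mathbb P}(\lambda)$, whereas the paper constructs the common upper bound directly, observing that pointwise compatibility forces $\bigcup_i p_i$ and $\bigcup_i q_i$ to agree on their intersection, so their union generates a single Boolean algebra which serves as the bound. The underlying amalgamation is identical in both cases---indeed, the directed-closure proof in Lemma~\ref{properties}(1) itself proceeds by taking the union and generating---so the difference is purely one of packaging: your version is more modular and makes explicit that nothing new is happening beyond what was already established, while the paper's version is self-contained. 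Your remark that the case $\lambda=\aleph_0$ is excluded is apt; the paper's direct proof likewise tacitly needs the union of the $w_{p_i}$'s and $w_{q_i}$'s to have size $<\lambda$, which fails for $\lambda=\aleph_0$, and in any event Lemma~\ref{newmet} is only invoked for uncountable $\lambda$ in Theorem~\ref{isomorphisms}.
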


\begin{proof} Suppose that $\langle p_i:\,i<\omega\rangle$ and $\langle q_i:\,i<\omega\rangle$ are pairwise compatible increasing sequences. In particular this
means that, on the one hand, each $p_i, q_i$ agree on their intersections, and on the other hand that $\bigcup_{i<\omega} p_i$
and $\bigcup_{i<\omega} q_i$ each form a Boolean algebra. Furthermore $\bigcup_{i<\omega} p_i$ and 
$\bigcup_{i<\omega} q_i$ agree on their intersection, and hence their union can be used to generate a Boolean algebra, which will
then be a common upper bound to the two sequences.
$\eop_{\ref{newmet}}$
\end{proof}

\begin{theorem}\label{isomorphisms} Let  $\lambda=\lambda^{<\lambda}$, let $\cf(\kappa)\ge\lambda^{++}$ and let
$G$ be a generic for the iteration with $(<\lambda)$- supports of length $\kappa$ of the forcing to add the generic Boolean algebra
of size $\lambda^+$ by 
conditions of size $<\lambda$ over a model of GCH, or in the case of $\lambda=\aleph_0$ the forcing to add $\kappa$ many Cohen reals. Then in $\V[G]$:
\begin{description}
\item[(1)] 
the universality number of the class of Banach spaces
of density $\lambda^+$ under {\em isomorphisms} is $\kappa=2^{\lambda^+}$.
\item[(2)] 
there are $2^{\lambda^+}$ many pairwise non-isomorphic Banach spaces of density $\lambda^+$.
\end{description}
\end{theorem}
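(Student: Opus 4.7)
For $\lambda=\aleph_0$, part (1) is exactly Theorem~\ref{isometries}. I therefore focus on $\lambda>\aleph_0$ for part (1), and on any $\lambda$ for part (2). The strategy for part (1) is to combine the single-step non-embedding argument of Theorem~\ref{nonembedding} with the iteration preservation theorem of Cummings, D\v zamonja, Magidor, Morgan and Shelah (Theorem~\ref{5authors}), using that by Lemma~\ref{properties} and Lemma~\ref{newmet} the single-step forcing ${\mathbb P}(\lambda)$ is $(<\lambda)$-directed-closed, stationary $\lambda^+$-cc and countably parallel-closed, so Theorem~\ref{5authors} applies and the $(<\lambda)$-support iteration $\mathbb Q$ is itself $(<\lambda)$-closed and stationary $\lambda^+$-cc, hence preserves all cardinals and cofinalities. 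A standard nice-name count using $\cf(\kappa)\ge\lambda^{++}>\lambda^+$ then gives $2^{\lambda^+}=\kappa$ in $V[G]$, making the universality number trivially $\le\kappa$.

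\textbf{Lower bound for part (1).} Suppose for contradiction that $\{C({\rm St}(\alg_i)):i<\mu\}$ with $\mu<\kappa$ is a universal family, which by Fact~\ref{classicalf} I may take with each $\alg_i$ a Boolean algebra of size $\lambda^+$. By the $\lambda^+$-cc of $\mathbb Q$ each $\alg_i$ has a nice name supported on a set $S_i\subseteq\kappa$ of size $\le\lambda^+$; since $\max(\mu,\lambda^+)<\kappa$, I fix a stage $\beta<\kappa$ with $\beta\notin S=\bigcup_{i<\mu}S_i$, and let $\mathfrak B_\beta$ be the generic Boolean algebra of size $\lambda^+$ added at stage $\beta$. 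For each $i<\mu$ separately, I argue that $C({\rm St}(\mathfrak B_\beta))$ does not isomorphically embed into $C({\rm St}(\alg_i))$ in $V[G]$ by running the proof of Theorem~\ref{nonembedding} at coordinate $\beta$: given a name $\dot T$ forced by some $p^*$ to be an embedding with distortion bounded by some $c<n_*\in\omega$, for each generator $a_j$ of $\mathfrak B_\beta$ pick a condition $p_j\ge p^*$ deciding $x_j=\dot T(\chi_{[a_j]})$ and forcing $a_j\in w_{p_j(\beta)}$, then refine by a double $\Delta$-system argument, first on the $(<\lambda)$-sized iteration supports of the $p_j$ (possible since $\lambda^{<\lambda}=\lambda$) and then on the pieces $w_{p_j(\beta)}$ at stage $\beta$ exactly as in Lemma~\ref{properties}, to produce $n_*^2+1$ pairwise compatible conditions whose join $q$ does not decide any Boolean relation among the chosen generators $a_{j_0},\ldots,a_{j_{n_*^2}}$. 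Extending $q$ at coordinate $\beta$ inside ${\mathbb P}(\lambda)$ in two ways, once forcing the $a_{j_k}$'s to be pairwise disjoint and once forcing them to form a $<_{\mathfrak B_\beta}$-chain, yields contradictory norms of $x_{j_0}+\ldots+x_{j_{n_*^2}}$ in $C({\rm St}(\alg_i))$, exactly as in Theorem~\ref{nonembedding}.

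\textbf{Part (2) and the main obstacle.} By transfinite recursion of length $\kappa=2^{\lambda^+}$ I construct a pairwise non-isomorphic family $\{Y_\nu:\nu<\kappa\}$ of Banach spaces of density $\lambda^+$: at step $\nu$, part (1) guarantees that $\{Y_\alpha:\alpha<\nu\}$ is not a universal family (because $\nu<\kappa$ is strictly below the universality number), so I choose any Banach space $Y_\nu$ of density $\lambda^+$ not isomorphically embedding into any previous $Y_\alpha$; in particular $Y_\nu\not\cong Y_\alpha$ for all $\alpha<\nu$. The principal technical obstacle lies in running the double $\Delta$-system refinement in the iterated (rather than single-step) context of part (1), so that after pinning down the root of the iteration supports the coordinate $\beta$ remains freely available in ${\mathbb P}(\lambda)$ for the two conflicting extensions; this is what the combination of $\lambda=\lambda^{<\lambda}$ with the countable parallel-closure of ${\mathbb P}(\lambda)$ (Lemma~\ref{newmet}) secures through Theorem~\ref{5authors}.
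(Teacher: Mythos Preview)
Your overall plan for part (1) is sound and uses the same ingredients as the paper, but you miss the one point the paper singles out as decisive for uncountable $\lambda$: the iteration is $(<\lambda)$-closed (in particular countably closed), so \emph{no reals are added}. This is exactly what resolves the obstacle described in the paper's Scenario discussion: once $\alg_i$ lies in an intermediate model $V'$, the space $C({\rm St}(\alg_i))$ computed in $V'$ equals the one in $V[G]$, so a condition can genuinely ``decide'' $x_j=\dot T(\chi_{[a_j]})$ as a fixed element with a fixed norm, and the two incompatible extensions at coordinate $\beta$ yield an honest numerical contradiction. Without this absoluteness, your phrase ``deciding $x_j$'' is ill-defined (the target space is itself a name), and the ``contradictory norms'' step does not go through---this is precisely why the case $\lambda=\aleph_0$ needs the separate simple-function argument of Theorem~\ref{isometries}. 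The paper's organization reflects this: rather than running a double $\Delta$-system inside the iteration, it factors the iteration at a stage past the supports of all the $\alg_i$, observes that $C({\rm St}(\alg_i))$ is unchanged from there on, and applies the single-step Theorem~\ref{nonembedding} verbatim over the intermediate model. So what you flag as the principal obstacle (the $\Delta$-system refinement) is routine; the real hinge is the no-new-reals absoluteness, which you do not mention.

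For part (2) you take a genuinely different route from the paper. The paper argues directly that the Banach spaces $C({\rm St}(\alg))$ added at the individual stages of the iteration are already pairwise non-isomorphic: an isomorphism between two of them is determined by its restriction to a dense set of size $\lambda^+$, hence has small support, and then one reruns the argument of part (1). Your approach---a transfinite diagonalisation of length $\kappa$ that at each step invokes part (1) to pick a space not embedding into any earlier one---is also correct and has the virtue of being a black-box consequence of the universality number; the paper's approach is more explicit and identifies a concrete family of $\kappa$ witnesses.
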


\begin{proof} (1) For the case of $\lambda=\aleph_0$ we use Theorem \ref{isometries}. In the case of uncountable $\lambda$, 
it follows from Lemma  \ref{properties}, Lemma \ref{newmet} and Theorem \ref{5authors} that the
iteration of the  forcing ${\mathbb P}(\lambda)$ described in the statement of the Theorem, preserves cardinals. By the 
countable closure of the forcing no reals are added, 
and hence the argument described in the {\bf Scenario}, with $\lambda^+$ in place of $\aleph_1$, works out to give the
desired conclusion.

{\noindent (2)} The claim is that the Banach spaces $C({\rm St}(\alg))$ added at the individual steps of the iteration
are not pairwise isomorphic. To prove this, we only need to notice that an embedding of a Banach space into another is determined by the restriction of
an embedding onto a dense set, hence in our case a set of size $\lambda^+$, and then to argue as in (1). 
$\eop_{\ref{isomorphisms}}$
\end{proof}

Using the same reasoning as in Theorem \ref{isomorphisms} with $\lambda=\aleph_0$ and relating this to Theorem
\ref{Cohenrealc} we obtainÉ

\begin{theorem}\label{wcgfinal} In the Cohen model for $2^{\aleph_0}=\kappa$, where ${\rm cf}(\kappa)\ge \aleph_2$, 
the universality number of the wcg Banach spaces is at least ${\rm cf}\kappa$ and there are ${\rm cf}(\kappa)$ pairwise
non-isomorphic wcg Banach spaces.
\end{theorem}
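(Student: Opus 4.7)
The plan is to combine Theorem \ref{Cohenrealc} with the intermediate-model strategy behind Theorem \ref{isomorphisms}, using the Bell theorem quoted after Definition \ref{embeddingBell} to ensure that the witnessing c-algebras give rise to wcg Banach spaces. Throughout, let $\V$ be a ground model of GCH and let $G$ be generic for the product Cohen forcing adding $\kappa$ reals, so that $\V[G]\satisfies 2^{\aleph_0}=\kappa$.

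For the universality bound, I would suppose toward a contradiction that $\{Y_\alpha:\,\alpha<\alpha^\ast\}$ with $\alpha^\ast<{\rm cf}(\kappa)$ is a universal family of wcg Banach spaces of density $\aleph_1$ in $\V[G]$. By ccc of Cohen forcing each $Y_\alpha$ is named from a coordinate set $S_\alpha\subseteq\kappa$ of size $\aleph_1$, and $S:=\bigcup_{\alpha<\alpha^\ast}S_\alpha$ has cardinality at most $\aleph_1\cdot|\alpha^\ast|<{\rm cf}(\kappa)\le\kappa$, so I can pick $\xi\in\kappa\setminus S$. The $\xi$-th Cohen real $r_\xi$ is then a single Cohen real over $\V[G\rest S]$, and Theorem \ref{Cohenrealc} applied inside $\V[G\rest S]$ to $r_\xi$ produces a c-algebra $\alg\in\V[G\rest S][r_\xi]\subseteq\V[G]$; by Bell's theorem, $X:=C({\rm St}(\alg))$ is a wcg space of density $\aleph_1$. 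By Fact \ref{classicalf} applied inside $\V[G\rest S]$, every $Y_\alpha$ isometrically embeds into some $C({\rm St}({\mathfrak B}_\alpha))$ with ${\mathfrak B}_\alpha\in\V[G\rest S]$ a Boolean algebra of size $\aleph_1$, and this embedding transfers to $\V[G]$. Composing, any isomorphic embedding of $X$ into some $Y_\alpha$ would yield one into $C({\rm St}({\mathfrak B}_\alpha))$, contradicting Theorem \ref{Cohenrealc}.

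For the second claim I would fix a cofinal sequence $\langle\xi_i:\,i<{\rm cf}(\kappa)\rangle$ in $\kappa$, set $\V_i:=\V[G\rest\xi_i]$, and apply Theorem \ref{Cohenrealc} inside $\V_i$ to the single Cohen real $r_{\xi_i}$ to produce c-algebras $\alg_i$ and wcg spaces $X_i:=C({\rm St}(\alg_i))$ of density $\aleph_1$. For $i<j$ we have $X_i\in\V_j$, so Fact \ref{classicalf} inside $\V_j$ combined with Theorem \ref{Cohenrealc} at stage $\xi_j$ prevents $X_j$ from isomorphically embedding into $X_i$; in particular $X_j\not\cong X_i$, so $\{X_i:\,i<{\rm cf}(\kappa)\}$ is a pairwise non-isomorphic family.

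The delicate point, and the one I expect to demand the most care, is that Theorem \ref{Cohenrealc} as stated proves non-embedding inside the small extension $\V[G\rest S][r_\xi]$, whereas the contradiction must be drawn in the larger universe $\V[G]$. I would handle this in the style of the proof of Theorem \ref{isometries}: a hypothetical embedding $T\in\V[G]$ would be approximated by finite-support Cohen conditions $\{p_i\}_{i<\omega_1}$ forcing each $T(\chi_{[i]})$ to be close to a simple function $h_i\in\V[G\rest S]$ with rational coefficients; a $\Delta$-system argument places coordinate $\xi$ in the root, leaving bits of $r_\xi$ beyond the finite supports of the $p_i$ free to be extended incomparably, and the dichotomy underlying Theorem \ref{Cohenrealc} can then be forced into $\alg$ so that the fixed approximations $h_{i_k}$ deliver the familiar norm contradiction.
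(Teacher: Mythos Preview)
Your proposal is correct and follows essentially the same route as the paper. The paper's own proof is a one-line reference (``Using the same reasoning as in Theorem \ref{isomorphisms} with $\lambda=\aleph_0$ and relating this to Theorem \ref{Cohenrealc}''), and what you have written is precisely an unpacking of that reference: the intermediate-model/ccc reduction from Theorem \ref{isomorphisms}, the c-algebra of Theorem \ref{Cohenrealc} (wcg by Bell), and the rational-simple-function approximation trick of Theorem \ref{isometries} to push the non-embedding from the one-step extension to the full $\V[G]$. One cosmetic point: rather than applying Fact \ref{classicalf} inside $\V[G\rest S]$ to the possibly recomputed $Y_\alpha$, it is slightly cleaner (and closer to the paper's phrasing in the proof of Theorem \ref{isometries}) to apply it first in $\V[G]$ to replace each $Y_\alpha$ by $C({\rm St}(\mathfrak B_\alpha))$ and then use ccc to place the $\aleph_1$-sized algebras $\mathfrak B_\alpha$ in $\V[G\rest S]$; but your version also works, since the isometry on the dense set extends by continuity.
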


\section{Isometries and isomorphisms of small Banach-Mazur diameter}\label{smalldiameter}
As we have seen in \S\ref{Cohenlike}, the Cohen-like extensions do not distinguish the universality number of Banach spaces
of a fixed density under isomorphisms or under isometries, the subject of Conjecture \ref{conj2}. This short section gives evidence that some forms of that conjecture are just true in ZFC and serves as an interlude to motivate study of other kinds of embedding but isometries and isomorphisms. The results will be easy to a Banach space theorist as they use a well known theorem in the subject, due to Jarosz in \cite{Jarosz}. 

\begin{theorem}[Jarosz]\label{Jar} Suppose that $T:\,C(K)\to C(L)$ is an isomorphic embedding such that $||T||\cdot ||T^{-1}||<2$.
Then there is a closed subspace $L_1\subseteq L$ which can be continuously mapped onto $K$.
\end{theorem}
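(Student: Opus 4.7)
The plan is to exploit the adjoint operator $T^{*}\colon M(L)\to M(K)$ together with the strict bound $\lambda:=\|T\|\|T^{-1}\|<2$ in order to extract, for each point $x\in K$, a canonical family of preimages inside $L$. After rescaling I may assume $\|T\|\le 1$ and $\|T^{-1}\|\le\lambda<2$. For each $y\in L$ set $\nu_y:=T^{*}\delta_y\in M(K)$, so $(Tf)(y)=\int_K f\,d\nu_y$ and $\|\nu_y\|\le 1$; note that $y\mapsto \nu_y$ is continuous from $L$ into $M(K)$ equipped with the weak-$^{*}$ topology. Dually, for each $x\in K$ the functional $g\mapsto T^{-1}(g)(x)$ defined on $Y:=T(C(K))\subseteq C(L)$ has norm at most $\lambda$, so Hahn-Banach yields a measure $\mu_x\in M(L)$ with $T^{*}\mu_x=\delta_x$ and $\|\mu_x\|\le\lambda$.

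The key quantitative lemma I would establish is that $\lambda<2$ forces a strict asymmetry in the Jordan decomposition $\nu_y=\nu_y^{+}-\nu_y^{-}$ of each measure $\nu_y$ whenever it ``pulls strongly'' toward some $x\in K$, and symmetrically for each $\mu_x$. Concretely, testing with peaking functions $f\in C(K)$ normalised so that $f(x)=1$ and $\|f\|_{\infty}=1$, the identity $\nu_y(f)=(Tf)(y)$ combined with $\|\nu_y\|\le 1$ implies that if $|\nu_y(f)|$ is close to $1$ then one of $\nu_y^{+}, \nu_y^{-}$ dominates the other by a margin controlled by $2-\lambda$, and the dominant part must sit on the set where $f$ is near its peak value; varying $f$ over peaking functions at $x$ then localises this dominant mass to $x$ itself.

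Using this, define $L_1\subseteq L$ to be the closed set of those $y$ for which $\nu_y$ exhibits sufficient asymmetry (above a threshold depending on $2-\lambda$), and define $\pi\colon L_1\to K$ by sending $y$ to the essentially unique concentration point of the dominant Jordan component of $\nu_y$. Continuity of $\pi$ follows from continuity of $y\mapsto \nu_y$ together with the quantitative concentration estimate; surjectivity follows from the measures $\mu_x$, whose support must meet $L_1$ in a set of points $y$ that $\pi$ sends to $x$, because $\|\mu_x\|\le\lambda<2$ combined with $T^{*}\mu_x=\delta_x$ prevents the mass of $\mu_x$ from being spread or cancelling too widely.

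The main obstacle is the selection/continuity argument for $\pi$: turning the informal phrase ``concentration of the dominant part of $\nu_y$ near $x$'' into a rigorous single-valued continuous map is delicate, and it is exactly here that the strict inequality $\lambda<2$ (as opposed to $\lambda\le 2$) is essential. The gap $2-\lambda$ provides the quantitative slack needed to resolve the natural sign ambiguity between the positive and negative Jordan components of the adjoint measures; without this gap, Cambern-type examples where $T$ mixes signs across disjoint clopen pieces of $L$ yield perfectly balanced measures for which no continuous selection exists. An alternative route would be to mimic the extreme-point analysis from Cambern's theorem for onto isomorphisms, tracking how the extreme points $\pm\delta_x$ of $B_{M(K)}$ pull back through $T^{**}$; the sign-ambiguity obstacle reappears there in the same form and is again dissolved by the same $2-\lambda$ margin.
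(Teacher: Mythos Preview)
The paper does not prove this theorem. It is stated as Theorem~\ref{Jar} with the attribution ``[Jarosz]'' and cited to \cite{Jarosz}; the surrounding text explicitly calls it ``a well known theorem in the subject, due to Jarosz'' and simply uses it as a black box in the proof of Theorem~\ref{thesame}. So there is no in-paper proof against which to compare your attempt.

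As to the content of your sketch: the strategy via the adjoint $T^{*}$, the measures $\nu_y=T^{*}\delta_y$ and Hahn--Banach extensions $\mu_x$ with $T^{*}\mu_x=\delta_x$, and the use of the margin $2-\lambda$ to break the sign symmetry in the Jordan decomposition, is indeed the circle of ideas behind the Amir--Cambern theorem and Jarosz's extension of it. Where your outline remains genuinely incomplete is exactly where you flag it: the passage from ``the dominant Jordan part of $\nu_y$ concentrates near a point'' to a well-defined, single-valued, continuous $\pi\colon L_1\to K$ is the substance of the argument, and phrases like ``sufficient asymmetry (above a threshold depending on $2-\lambda$)'' and ``essentially unique concentration point'' are placeholders rather than definitions. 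In Jarosz's actual proof this step is handled by a careful quantitative analysis (peaking functions plus explicit inequalities in the norm), not by a soft selection principle, and getting the constants to close up is where the inequality $\|T\|\,\|T^{-1}\|<2$ is used sharply. Your proposal identifies the right objects and the right obstacle, but does not yet contain the lemma that resolves it.
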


We can use Jarosz's theorem  to reduce the question of the existence
of isometrically universal Banach spaces or more generally, universals under isomorphisms of small 
Banach-Mazur diameter to that of Boolean algebras and their ideals. Let us recall the following 

\begin{observation}\label{isominto} Suppose that $L$ maps onto $K$, then $C(K)$ isometrically embeds into $C(L)$.
\end{observation}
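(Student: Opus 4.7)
The plan is to exhibit an explicit isometric embedding via pullback by the assumed continuous surjection. Let $\phi:L\to K$ be the continuous surjection given by the hypothesis that $L$ maps onto $K$. Define $T:C(K)\to C(L)$ by $T(f)=f\circ\phi$. The map $T$ is well-defined since the composition of continuous functions is continuous, and it is clearly linear from the linearity of composition.

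Next I would verify the isometry property. For any $f\in C(K)$, since $\phi$ is surjective, one has
\[
\|Tf\|_\infty = \sup_{x\in L}|f(\phi(x))| = \sup_{y\in \phi[L]}|f(y)| = \sup_{y\in K}|f(y)| = \|f\|_\infty,
\]
where the penultimate equality uses $\phi[L]=K$. This simultaneously shows that $T$ preserves the norm and, a fortiori, is injective (if $Tf=0$, then $f$ vanishes on $\phi[L]=K$, so $f=0$).

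Finally, since $T$ is a norm-preserving linear map between Banach spaces, it is automatically an isometric embedding of $C(K)$ into $C(L)$, which is what was to be shown. There is no genuine obstacle: the observation is essentially the functorial translation of the surjection $\phi:L\twoheadrightarrow K$ into an injection $T:C(K)\hookrightarrow C(L)$ under the contravariant functor $C(\,\cdot\,)$, and the isometry property falls out directly from surjectivity.
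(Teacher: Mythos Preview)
Your proof is correct and follows essentially the same approach as the paper: define $T(f)=f\circ\phi$ for the continuous surjection $\phi:L\to K$ and observe that surjectivity of $\phi$ makes $T$ norm-preserving. The paper's proof is simply a terser version of what you wrote.
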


\begin{proof} Let $F$ be a continuos mapping from $L$ that maps onto $K$. Defining for $f\in C(K)$ $T(f)$ by $Tf(x)=
f(F(x))$ for $x\in L$ gives the desired isometry.
$\eop_{\ref{isominto}}$
\end{proof}

We shall also use the following 

\begin{definition}\label{weak} Let $\MM$ ba a family of Boolean algebras and $\NN\subseteq \MM$. We say that
$\NN$ is {\em weakly universal} for $\MM$ if every element of $\MM$ embeds into a factor of an element of $\NN$.
\end{definition}

\begin{theorem}\label{thesame} For every $\kappa$, the following numbers are the same
\begin{itemize}
\item 
the universality  number of the class of Banach spaces of density $\kappa$ with isomorphic
embeddings of the Banach-Mazur diameter $<2$, 
\item the smallest size of a weakly universal 
family  of Boolean algebras of size $\kappa$
and
\item  the universality  number of the class of Banach spaces of 
density $\kappa$ under isometries.
\end{itemize}
\end{theorem}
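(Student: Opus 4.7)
\noindent{\bf Proof plan.}
Write $u_1$, $u_2$, $u_3$ for the three universality numbers in the order listed; I aim to prove them equal via a cycle $u_1\le u_3\le u_2\le u_1$. The first step $u_1\le u_3$ is immediate, since every isometry has Banach--Mazur diameter equal to $1<2$, so an isometrically universal family is automatically universal under isomorphic embeddings of diameter $<2$.

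For $u_2\le u_1$, I would start from a family $\{X_\alpha:\alpha<u_1\}$ of Banach spaces of density $\kappa$ that is universal under isomorphisms of diameter $<2$. By Fact \ref{classicalf} one may assume each $X_\alpha=C({\rm St}(\alg_\alpha))$ for a Boolean algebra $\alg_\alpha$ of size $\kappa$. For any Boolean algebra $\mathfrak B$ of size $\kappa$, the space $C({\rm St}(\mathfrak B))$ admits an isomorphic embedding $T$ into some $X_\alpha$ with $\|T\|\cdot\|T^{-1}\|<2$, so applying Jarosz's Theorem \ref{Jar} to $T$ yields a closed subspace $L_1\subseteq {\rm St}(\alg_\alpha)$ that continuously surjects onto ${\rm St}(\mathfrak B)$. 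Stone duality identifies $L_1$ with the Stone space of a factor of $\alg_\alpha$ and translates the surjection $L_1\twoheadrightarrow {\rm St}(\mathfrak B)$ into an embedding of $\mathfrak B$ into that factor. Hence $\{\alg_\alpha\}$ is weakly universal and $u_2\le u_1$.

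For $u_3\le u_2$ I would start from a weakly universal family $\{\alg_\alpha:\alpha<u_2\}$ and consider the Banach spaces $X_\alpha=C({\rm St}(\alg_\alpha))$, each of density $\kappa$. Given any Banach space $X$ of density $\kappa$, Fact \ref{classicalf}(2) embeds $X$ isometrically into some $C({\rm St}(\mathfrak B))$ with $|\mathfrak B|=\kappa$, and weak universality provides an embedding of $\mathfrak B$ into a factor of some $\alg_\alpha$. Dualizing, this produces a continuous surjection from the corresponding clopen subspace of ${\rm St}(\alg_\alpha)$ onto ${\rm St}(\mathfrak B)$, which then extends to a continuous surjection of all of ${\rm St}(\alg_\alpha)$ by sending the complementary clopen to any fixed point of ${\rm St}(\mathfrak B)$ (this is continuous because clopen partitions glue continuous maps). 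Observation \ref{isominto} now yields an isometric embedding $C({\rm St}(\mathfrak B))\hookrightarrow X_\alpha$, and hence $X\hookrightarrow X_\alpha$ isometrically, giving $u_3\le u_2$.

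The main obstacle is coordinating the meaning of ``factor'' so that both legs of the cycle close: Theorem \ref{Jar} most naturally delivers an embedding into a quotient of $\alg_\alpha$, corresponding to a closed and not necessarily clopen Stone subspace, whereas the argument for $u_3\le u_2$ above runs smoothly precisely when the factor corresponds to a clopen subspace (a direct factor of the algebra). I expect the resolution to adopt the direct-factor interpretation and to absorb the Jarosz-produced quotient back into a modified algebra of the same cardinality $\kappa$, for example by freely adjoining, for each relevant quotient $\alg_\alpha/I$, a disjoint clopen copy of $\alg_\alpha/I$. The crucial technical step is verifying that such an absorption does not exceed cardinality $\kappa$ and preserves the diameter-$<2$-universality of the replaced family, which I anticipate being the main point to check.
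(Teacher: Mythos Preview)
Your cycle $u_1\le u_3\le u_2\le u_1$ is exactly the one the paper runs, and your treatments of $u_1\le u_3$ and of $u_2\le u_1$ via Jarosz coincide with the paper's. The divergence comes from how you read Definition~\ref{weak}: in the paper ``factor'' means a quotient $\alg_\alpha/I$ by an arbitrary ideal, not a direct (principal-ideal) factor. Closed subspaces of ${\rm St}(\alg_\alpha)$ are precisely the Stone spaces of such quotients, so Jarosz's output already matches the definition of weak universality and no absorption step is needed; your cardinality worry (there may be $2^\kappa$ ideals on $\alg_\alpha$) simply never arises.

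With the quotient reading the real content shifts to $u_3\le u_2$. Here the paper does not use your clopen-gluing trick (which indeed only works for direct factors) but instead argues that for any closed $L\subseteq K={\rm St}(\alg_\alpha)$ the space $C(L)$ embeds isometrically into $C(K)$: by Tietze every $f\in C(L)$ has a norm-preserving extension to $C(K)$, and the paper assembles these extensions---chosen on a Hamel basis of $C(L)$ and then extended linearly, with an appeal to Hahn--Banach---into the desired embedding. So the obstacle you flagged is genuine but sits in the other leg of the cycle, and the paper's resolution is an extension argument internal to a single $C({\rm St}(\alg_\alpha))$ rather than an enlargement of the universal family.
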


\begin{proof} Let $\kappa$ be fixed. 
Suppose that $\{X_\alpha:\,\alpha<\alpha^\ast\}$ are universal under isomorphisms of the Banach-Mazur diameter $<2$
in the class of Banach spaces of density $\kappa$. By the introductory remarks, it follows that we can assume that for
every $\alpha$ there is a Boolean algebra $\alg_\alpha$ of size $\kappa$ such that $X_\alpha=C({\rm St}(\alg_\alpha))$.
The closed subspaces of ${\rm St}(\alg_\alpha)$ are of the form ${\rm St}(\alg_\alpha/I)$ where $I$ is an ideal in $\alg_\alpha$.
Let $\alg$ be a Boolean algebra of size $\kappa$ and let $K$ be it Stone space. Hence $C(K)$ embeds with a Banach-Mazur distance $<2$ in some $X_\alpha$. By Jarosz's theorem, there is a closed subspace $L$ of ${\rm St}(\alg_\alpha)$
such that $L$ maps continuously onto $K$. Let $I$ be an ideal on $\alg_\alpha$ such that $L={\rm St}(\alg_\alpha/I)$. It follows
that $\alg$ is a subalgebra of $\alg_\alpha/I$. Hence every Boolean algebra of size $\kappa$ embeds in a factor of some
$\{\alg_\alpha:\,\alpha<\alpha^\ast\}$.

On the other hand suppose that $\{\alg_\alpha:\,\alpha<\alpha^\ast\}$ is a family of Boolean algebras of size $\kappa$ 
such that every Boolean algebra of size $\kappa$  embeds in a factor of some $\alg_\alpha$. Let 
$X_\alpha=C({\rm St}(\alg_\alpha))$ and we claim that  $\{X_\alpha:\,\alpha<\alpha^\ast\}$ are universal under isometries in the class of Banach spaces of density $\kappa$. It suffices to show that for every Boolean algebra $\alg$ of size $\kappa$,
$C({\rm St}(\alg))$ isometrically embeds into some $X_\alpha$. Given such $\alg$, let $\alpha<\alpha^\ast$ be such that for
some ideal $I$ on $\alg$ we have that $\alg$ embeds into $\alg_\alpha/I$. Hence  ${\rm St}(\alg)$ is a continuous image of
the ${\rm St}(\alg_\alpha/I)$, which is a closed subspace $L$ of $X_\alpha$. By Observation \ref{isominto}, $C({\rm St}(\alg))$ embeds 
isometrically into $C(L)$. Let $B$ a basis of $C(L)$ as  a vector space. By Tietze's extension theorem, every function $f\in B$ can be extended to a function $g\in X_\alpha
=C({\rm St}(\alg_\alpha))$ of the same norm. Let this association be called $T$. Now extend $T$ to a linear embedding from $C(L)$
\footnote{of course, we are simply spelling out an application of the Hahn-Banach theorem}, which 
finally shows that $C(L)$ and hence $C(K)$ embeds isomorphically into $X_\alpha$. As the universality  number of the class of Banach spaces of 
density $\kappa$ under isometries is clearly $\ge$ the universality  number of the class of Banach spaces of density $\kappa$ with isomorphic
embeddings of the Banach-Mazur diameter $<2$, we obtain the desired equality.
$\eop_{\ref{thesame}}$
\end{proof}

%%%%%%%%%%%%%%%%%%%%%%%%%%%%%%%%%%%%%%%%
\section{Natural spaces of functions}\label{natural} As we have seen in \S \ref{smalldiameter},  there is a good motivation to study other kinds of embeddings of Banach spaces but isometries and isomorphisms. Sticking to the spaces of the form $C(K)$, among the classically studied isomorphic embeddings are those that preserve multiplication, or the ones that preserve the pointwise order of
functions. It is known for either one of them (Gelfand and Kolmogorov \cite{GelfKolm} for the former and Kaplansky \cite{Kaplansky} for the latter) that if they are onto
they actually characterize the topological structure of the space, that is if $T:\,C(K)\to C(L)$ is an onto embedding which either
preserves multiplication or the pointwise order, then $K$ and $L$ are homeomorphic. We shall show that in moving from the order
preserving onto assumption just a small bit, we no longer have the preservation of the homeomorphic structure, but under
the assumption that GCH fails sufficiently, we do have a large number of pairwise nonisomorphic spaces and a large universality
number.  The same methods can be applied to study embeddings with some amount of preservation of multiplication, which we
shall not do here. Our methods will involve a combination of model theory, set theory and Banach space theory. In this section we
introduce a simple model-theoretic structure which will be used to achieve that mixture of methods.

Suppose that $\alg$ is a Boolean algebra. We shall associate to it a simple structure whose role is
to represent the space $C(K)$, where $K$ is the Stone space of $\alg$, $K={\rm St}(\alg)$. The idea is as follows. 
We are interested in the set of all simple functions with rational coefficients defined on $K$, 
so functions of the type $\Sigma_{i\le n} q_i\chi_{[a_i]}$, where each $q_i$ is rational, $a_i\in \alg$ and
$[a_i]$ denotes the basic clopen set in $K$ determined by $a_i$.
Every element of
$C(K)$ is a limit of a sequence of such functions, since the limits of such sequences form exactly the class
of Lebesgue integrable functions, which of course includes $C(K)$.  Let us then consider the vector space freely generated by $\alg$ over ${\mathbb Q}$, call it $V=V(\alg)$.
\footnote{This vector space figures in \cite{BrKo} with the notation $C_{\mathbb Q}(\alg)$ and is considered
in a different context.}
Hence every simple function on $K$ with rational coefficients corresponds uniquely to an element of
$V$, via an identification of each $a\in \alg$ with $\chi_{[a]}$. Using coordinatwise addition and scalar
multiplication the product $W=V^\omega$ becomes a vector space.
Any function $f$ in $C(K)$ can be identified with an element of this vector space, namely a sequence of
simple rational functions whose limit is $f$, and hence $C(K)$ can be identified with a subset of $W$.

To encapsulate this discussion we shall work with vector spaces with rational coefficients and with two
distinguished unary predicates $C$, $C_0$ satisfying $C_0\subseteq C$.
With our motivation in mind, we shall call them {\em function spaces}.
If such a space $(V,C, C_0)$ is the space of sequences of simple rational functions over a Stone space 
$K={\rm St}(\alg)$ and 
$C, C_0$
correspond respectively to the set of such sequences which converge or converge to 0, then we call $(V,C, C_0)$ a {\em natural space}
and we denote it by $N(\alg)$. In spaces of the form $N(\alg)$ for an element $\bar{f}$ of $C^{N(\alg)}$ we define
$||\bar{f}||$ as the norm in $C({\rm St}(\mathfrak A))$ of the limit $f$ of $\bar{f}$. If $\phi$ is an embedding 
between $N(\alg)$ and $N(\mathfrak B)$ we shall say that $D>0$ is a constant of the embedding if for every 
$\bar{f}$ of $C^{N(\alg)}$ we have that $ \dfrac{1}{D} \cdot ||\bar{f}||\le  ||\phi(\bar{f})||\le D\cdot  ||\bar{f}||$. Not every embedding
has such a constant, but we shall only work with the ones which do.

We shall mostly be interested in a specific case of
the representation of continuous functions as limits of simple functions, given by the following observation:

\begin{lemma}\label{inI} Suppose that $K={\rm St}(\alg)$ is the Stone space of a Boolean algebra $\alg$ and let $f\ge 0$
be a function in $C(K)$ with $||f||\le D^\ast$ for some $D^\ast>0$. Then there is a sequence $\langle f_n:\,n<\omega\rangle$ of 
simple functions, where each $f_n$ is of the form 
$\Sigma_{i\le n} q_i\chi_{[a_i]}$, with each $q_i$ rational in $(0,D^\ast]$ and $a_i\in \alg$, such that
$f=\lim f_n$.
\end{lemma}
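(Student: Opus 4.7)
The plan is to exploit that the Stone space $K = \mathrm{St}(\alg)$ is compact, Hausdorff, and zero-dimensional, with clopen sets being exactly $\{[a] : a \in \alg\}$. First I would produce, for each $k<\omega$, a uniform approximation $g_k$ of $f$ by a finite sum of the prescribed form with error at most $1/(k+1)$; then a padding manoeuvre aligns the $n$-th approximation with the required indexing $\sum_{i\le n} q_i \chi_{[a_i]}$.

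For the uniform approximation step, fix $k$ and set $\varepsilon = 1/(k+1)$. For each $x \in K$ choose a rational $q_x \in (0, D^\ast]$ with $|q_x - f(x)| < \varepsilon/2$; this is possible because $0 \le f(x) \le D^\ast$ (if $f(x) > 0$ pick a rational near $f(x)$, and if $f(x) = 0$ pick any rational in $(0, \varepsilon/2)$). Continuity of $f$ makes $U_x = \{y \in K : |f(y) - q_x| < \varepsilon/2\}$ open, and zero-dimensionality means $U_x$ contains a basic clopen set $[c_x]$ with $x \in [c_x] \subseteq U_x$. Compactness of $K$ yields a finite subcover $\{[c_{x_0}], \ldots, [c_{x_m}]\}$. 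Disjointifying it via $b_j = c_{x_j} \wedge \bigwedge_{j' < j} \neg c_{x_{j'}} \in \alg$ produces pairwise disjoint clopen sets $[b_j]$ covering $K$, and $g_k = \sum_{j \le m} q_{x_j} \chi_{[b_j]}$ is then of the required shape and satisfies $||g_k - f||_\infty < \varepsilon$, since on $[b_j] \subseteq [c_{x_j}] \subseteq U_{x_j}$ the function $g_k$ takes the value $q_{x_j}$, within $\varepsilon/2$ of $f$.

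To pass from $\langle g_k \rangle$ to a sequence $\langle f_n \rangle$ where $f_n$ has exactly $n+1$ summands, I use the observation that $[0_{\alg}] = \emptyset$, hence $\chi_{[0_{\alg}]} \equiv 0$; adjoining a trivial summand $q\,\chi_{[0_{\alg}]}$ with any rational $q \in (0, D^\ast]$ leaves the value of the function unchanged while increasing the number of summands by one. Let $m_k + 1$ denote the number of summands in $g_k$, and set $k(n) = \max\{k : m_k \le n\}$, which tends to infinity with $n$ (since each $m_k$ is finite). Define $f_n$ to be $g_{k(n)}$, padded with trivial summands to reach exactly $n+1$ terms. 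For the finitely many small $n$ for which no such $k(n)$ exists, let $f_n = \sum_{i\le n} (D^\ast/(n+1)) \chi_{[1_{\alg}]}$, say; these values do not affect the limit. Then each $f_n$ has the required shape, $||f_n - f||_\infty \to 0$, and so $f = \lim f_n$ uniformly (hence pointwise).

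The substantive content is just standard uniform approximation of a continuous function on a zero-dimensional compactum by clopen step functions with positive rational coefficients, so no deep obstacle is expected. The only finicky point is matching the prescribed combinatorial shape $\sum_{i\le n}$ with exactly $n+1$ summands, and this is disposed of by the $0_{\alg}$-padding trick above.
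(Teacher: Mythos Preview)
Your argument is correct, but it differs from the paper's. The paper first reduces to $D^\ast=1$, invokes Stone--Weierstrass to get density of real-coefficient step functions (taken with pairwise disjoint $[a_i]$), passes to rational coefficients, and then applies a \emph{clamping} step: each coefficient $q_i$ is replaced by $s_i=\min(\max(q_i,0),1)$, and one checks that since $0\le f\le 1$ this does not worsen the sup-norm error. Your route is more direct and avoids Stone--Weierstrass entirely: you pick the target rational value in $(0,D^\ast]$ from the outset and use zero-dimensionality plus compactness to produce the clopen partition. What the paper's detour buys is that the clamping operation is itself recorded as Definition~\ref{topping} (``top-up'') and reused later in the proof of the Preservation Lemma~\ref{preservation}; your approach delivers the lemma cleanly but does not set up that piece of machinery.

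Two minor remarks. First, your reading that $f_n$ must have exactly $n+1$ summands is more literal than the paper intends: the notation $\Sigma_{i\le n}q_i\chi_{[a_i]}$ is used earlier in \S\ref{natural} simply to mean ``finite rational step function'', and the paper's own proof makes no attempt to match the number of summands to the sequence index. Your padding trick with $\chi_{[0_{\alg}]}$ is nonetheless correct and harmless. Second, your definition $k(n)=\max\{k:m_k\le n\}$ need not be well-defined as written, since the set $\{k:m_k\le n\}$ may be infinite; replacing it by $k(n)=\max\{k\le n:m_k\le n\}$ (or any similar bounded variant) fixes this without affecting the limit.
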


\begin{proof} By multiplying by a constant if necessary, we can assume that $D^\ast=1$. Functions of the form 
$\Sigma_{i\le n} r_i\chi_{[a_i]}$, with each $r_i$ is real, contain the constant function 1, form an algebra and separate the
points of $K$, hence by the Stone-Weierstrass theorem they form a dense subset of $C(K)$. Notice that every function 
$\Sigma_{i\le n} r_i\chi_{[a_i]}$ can be, by changing the coefficients and the sets $a_i$ if necessary, represented
in the form where all $[a_i]$s are pairwise disjoint, so we can without loss of generality work only with such functions.
 Given $\varepsilon>0$
and $\Sigma_{i\le n} r_i\chi_{[a_i]}$ with $a_i$s disjoint, we can find for $i\le n$ rational numbers $q_i$ with $|q_i-r_i|<\varepsilon$, hence
the function $\Sigma_{i\le n} r_i\chi_{[a_i]}$ is approximated within $\varepsilon$ by $\Sigma_{i\le n} q_i\chi_{[a_i]}$,
showing that also functions with rational coefficients and sijoint $a_i$s are dense. Now given $f\ge 0$
a function in $C(K)$ with $||f||\le 1$ and $\varepsilon>0$, let $\Sigma_{i\le n} q_i\chi_{[a_i]}$ be a function with
rational coefficients and disjoint $a_i$s satisfying $||f- \Sigma_{i\le n} q_i\chi_{[a_i]}||< \varepsilon$, recalling that the $||\, ||$
in $C(K)$ is the supremum norm. Define now:
\[
s_i =\begin{cases}
		q_i  & \mbox{if } q_i\in [0,1],\\
		0 & \mbox{if } q_i < 0,\\
		1 & \mbox{if } q_i >1,
	\end{cases}
\]
and consider the function $\Sigma_{i\le n} s_i\chi_{[a_i]}$. We claim that $||f- \Sigma_{i\le n} s_i\chi_{[a_i]}||< \varepsilon$.
By the assumptions that $a_i$s are disjoint, for any $x$ there is at most one $i=i(x)$ such that $x\in [a_i]$.
If $x\notin\bigcup_{i\le n} [a_i]$ or $s_{i(x)}=q_{i(x)}$ then $|f(x)- \Sigma_{i\le n} s_i\chi_{[a_i]}(x)|= |f(x)-\Sigma_{i\le n} q_i\chi_{[a_i]}(x)|<
\varepsilon$. If $x\in [a_i]$ and $q_i<0$ then $|f(x)-s_i|=f(x)<f(x)-q_i< \varepsilon$ as $f(x)\ge 0$. If $x\in [a_i]$ and $q_i>1$ then 
$|f(x)-s_i|=1-f(x)<q_i- f(x)< \varepsilon$ as $f(x)\le 1$, which finishes the proof.
$\eop_{\ref{inI}}$
\end{proof}

\begin{definition}\label{topping} Suppose that $\bar{f}=\langle f_n:\,n>\omega\rangle$ is a sequence in $N(\alg)$ and suppose
that $\bar{f}'=\langle f'_n:\,n>\omega\rangle$ was obtained by first replacing each $f_n$ with an equivalent function 
$\Sigma_{i\le n} q_i\chi_{[a_i]}$ with disjoint $a_i$s and then replacing the coefficients $q_i$ by $s_i$ using the
procedure described in the proof of Lemma \ref{inI}. We say that $\bar{f'}$ is a {\em top-up} of $\bar{f}$.
\end{definition}

\begin{corollary}\label{spanning} Suppose that $\alg$ and $\mathfrak B$ are Boolean algebras and let ${\mathcal A}$ denote the linear subspace
of $C^{N(\alg)}$ spanned by the functions whose rational coefficients are in $[0,1]$. Then for every $\bar{f}=\langle f_n:\,n<\omega\rangle$
in $C^{N(\alg)}$ there is $\bar{g}\in {\mathcal A}$ with $\lim_n f_n=\lim_n g_n$.
\end{corollary}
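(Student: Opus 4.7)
The plan is to reduce this to Lemma \ref{inI} by decomposing the limit into its positive and negative parts. Set $f = \lim_n f_n$; since $f$ is continuous on the compact Stone space $K = {\rm St}(\alg)$, $\|f\|$ is finite. I would fix a positive rational $q \ge \|f\|$ and write $f = f^+ - f^-$, where $f^\pm = \max(\pm f, 0) \ge 0$ lie in $C(K)$ and satisfy $\|f^\pm\| \le q$.

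Next, I would apply Lemma \ref{inI} separately to the non-negative continuous functions $f^+/q$ and $f^-/q$, each of norm at most $1$, taking $D^* = 1$. This produces sequences $\bar{h}^+ = \langle h^+_n\rangle$ and $\bar{h}^- = \langle h^-_n\rangle$ of simple functions with rational coefficients in $(0,1] \subseteq [0,1]$, converging to $f^+/q$ and $f^-/q$ respectively. By the very shape of their coefficients, both $\bar{h}^+$ and $\bar{h}^-$ belong to $\mathcal{A}$.

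Finally, I would set $\bar{g} = q\bar{h}^+ - q\bar{h}^-$, formed componentwise. Since $\mathcal{A}$ is a $\mathbb{Q}$-linear subspace of $C^{N(\alg)}$ and $q \in \mathbb{Q}$, the sequence $\bar{g}$ lies in $\mathcal{A}$. Linearity of the limit then gives $\lim_n g_n = q\cdot(f^+/q) - q\cdot(f^-/q) = f^+ - f^- = f = \lim_n f_n$, as required.

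I foresee no substantive obstacle: Lemma \ref{inI} already carries out the real analytic work, namely the uniform approximation of a non-negative continuous function by simple functions whose rational coefficients have been clipped to $[0,1]$. The only detail to watch is that $q$ must be chosen rational, so that multiplication by $q$ respects the ambient $\mathbb{Q}$-vector space structure of $V(\alg)$; any rational upper bound for $\|f\|$ will do. The second Boolean algebra $\mathfrak{B}$ appearing in the hypothesis plays no role in the conclusion, so I read it as a leftover from a more general formulation used later in the paper.
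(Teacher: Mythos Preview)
Your proof is correct and follows essentially the same approach as the paper: decompose $f=\lim_n f_n$ into its positive and negative parts and reduce to Lemma~\ref{inI}. You are in fact slightly more careful than the paper, which applies Lemma~\ref{inI} directly with $D^\ast=\|f\|$ without making the rational rescaling explicit; your choice of a rational $q\ge\|f\|$ and application with $D^\ast=1$ makes membership in the $\mathbb{Q}$-linear span $\mathcal{A}$ transparent. Your observation that $\mathfrak{B}$ plays no role is also correct.
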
 

\begin{proof} Let $f=\lim_n f_n$, hence $f$ can be written as $f=f^+-f^{-}$ where $f^+=\max\{f,0\}$ and $f^-=\min\{f, 0\}$
are both continuous and positive. Therefore, by the closure of ${\mathcal A}$
under linear combinations  it suffices to prove the Corollary in the case of $f\ge 0$. Let $D^\ast=||f||$,
and we now  apply Lemma \ref{inI}.
$\eop_{\ref{spanning}}$
\end{proof}

The point of these definitions is the connection between the embeddability in the class of spaces of the form
$C({\rm St}(\alg))$ and the class of function spaces. Namely, we have the following

\begin{theorem}\label{induced} 
Suppose that $\alg$ and $\mathfrak B$ are Boolean algebras and let ${\mathcal A}$ denote the linear subspace
of $C^{N(\alg)}$ spanned by the functions whose rational coefficients are in $[0,1]$. Then
if there is an 
isomorphic embedding $T$ from $C({\rm St}(\alg))$ to $C({\rm St}(\mathfrak B))$  then
there is an isomorphic embedding $\phi$ from ${\mathcal A}$ to $N(\mathfrak B)$ satisfying that for every 
$\bar{f}=\langle f_n:\,n<\omega\rangle$ in ${\mathcal A}$, if $f=\lim_{n\in \omega} f_n$, then $\lim_{n\in \omega}\phi(\bar f)_n=
T(f)$.
\end{theorem}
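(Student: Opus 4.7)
The plan is to build $\phi$ via a Hamel basis of $\mathcal{A}$ as a $\mathbb{Q}$-vector space. The key observation is that the limit operator $\bar{f}\mapsto \lim_n f_n$ is $\mathbb{Q}$-linear from $\mathcal{A}$ to $C({\rm St}(\alg))$, $T$ is $\mathbb{R}$-linear on $C({\rm St}(\alg))$, and finite linear combinations commute with convergence of sequences. Hence if the limit-matching condition $\lim_n \phi(b)_n=T(\lim_n b_n)$ is enforced on each basis vector $b$, then by $\mathbb{Q}$-linear extension it will automatically hold on every element of $\mathcal{A}$, together with the required bilateral norm inequality.

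Concretely, I would first fix a Hamel basis $B$ of $\mathcal{A}$ over $\mathbb{Q}$. For each $b\in B$ set $g_b=\lim_n b_n\in C({\rm St}(\alg))$ and $h_b=T(g_b)\in C({\rm St}(\mathfrak B))$. Decomposing $h_b=h_b^+-h_b^-$ into its positive and negative parts, which are continuous and bounded by $||h_b||$, I apply Lemma \ref{inI} to each to obtain sequences of simple rational functions on ${\rm St}(\mathfrak B)$ converging uniformly to $h_b^+$ and $h_b^-$ respectively, and define $\phi(b)\in N(\mathfrak B)$ as their coordinatewise difference; this yields $\lim_n \phi(b)_n=h_b=T(g_b)$. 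Then I extend by declaring $\phi\bigl(\sum_i q_i b_i\bigr)=\sum_i q_i \phi(b_i)$ for any finite sum with $q_i\in\mathbb{Q}$ and $b_i\in B$.

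For any $\bar{f}=\sum_i q_i b_i\in\mathcal{A}$, the sequence $\phi(\bar{f})$ is then a finite $\mathbb{Q}$-linear combination of convergent sequences and so is itself in $C^{N(\mathfrak B)}$, with limit $\sum_i q_i T(g_{b_i})=T\bigl(\sum_i q_i g_{b_i}\bigr)=T(\lim_n \bar{f}_n)$ by linearity of $T$, proving the required limit-matching property. The bilateral norm inequality with the same constant $D$ as $T$ is then immediate: by definition $||\phi(\bar{f})||$ is the norm in $C({\rm St}(\mathfrak B))$ of $T(\lim\bar{f})$, which is sandwiched between $(1/D)||\lim\bar{f}||=(1/D)||\bar{f}||$ and $D||\lim\bar{f}||=D||\bar{f}||$. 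Preservation of the predicates $C$ and $C_0$ is automatic, since finite $\mathbb{Q}$-linear combinations of convergent (respectively, null) sequences are convergent (respectively, null).

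The main conceptual point, and the only real subtlety, is that the limit condition does not determine $\phi$ uniquely on $\mathcal{A}$, because many distinct sequences in $N(\mathfrak B)$ converge to the same continuous function; this is precisely what forces one to use a Hamel basis to produce a genuinely linear selector (essentially a Hahn--Banach-style choice, in the spirit of the proof of Theorem \ref{thesame}). If genuine injectivity of $\phi$ as a map of sequences -- and not merely modulo $C_0$ -- is required by the convention on ``embedding'', it can be achieved by perturbing each $\phi(b)$ by an element of $C_0^{N(\mathfrak B)}$ drawn from a fixed linearly independent family, which affects neither the limits nor the norm estimate above.
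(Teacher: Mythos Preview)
Your argument is correct and is actually more streamlined than the paper's. You exploit directly that the conclusion only asks $\phi$ to be $\mathbb{Q}$-linear and to satisfy $\lim_n\phi(\bar f)_n=T(\lim_n f_n)$; since both the limit operator and $T$ are linear, it suffices to enforce the limit condition on a Hamel basis and extend, after which the norm estimate and preservation of $C,C_0$ are immediate. (One small quibble: your sentence about ``finite $\mathbb{Q}$-linear combinations of null sequences'' is not the actual reason $C_0$ is preserved --- the basis vectors need not lie in $C_0$ --- but you have already given the correct reason one line earlier via the limit-matching property, so this is harmless.)

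The paper proceeds differently: it builds $\phi$ \emph{coordinatewise}. For each $a\in\alg$ and each $n$, it fixes a simple rational function $F_n(\chi_{[a]})$ in $C({\rm St}(\mathfrak B))$ within $2^{-(n+1)}$ of $T(\chi_{[a]})$, extends $\mathbb{Q}$-linearly over the $n$-th coordinate, and sets $\phi(\langle f_n\rangle)_n$ equal to this approximation of $T(f_n)$. It then checks by a direct triangle-inequality estimate that $\langle\phi(\bar f)_n\rangle$ is Cauchy and that its limit is $T(f)$. The virtue of the paper's construction is that $\phi(\bar f)_n$ depends only on $f_n$, so $\phi$ is induced by a single coordinate map; the price is the explicit Cauchy estimate. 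Your Hamel-basis construction sacrifices this coordinatewise structure but bypasses the estimate entirely. Since the only property of $\phi$ used downstream (in the Preservation Lemma) is the limit-matching identity, either construction suffices.
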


\begin{proof} 
Let $T:\,C({\rm St}(\alg))\to C({\rm St}(\mathfrak B))$ be an isomorphic embedding, so
$||T||<\infty$.
We intend to define an isomorphic embedding $\phi$ from ${\mathcal A}$ to $N(\mathfrak B)$. By linearity it is sufficient to
work with the basis of ${\mathcal A}$, so denote by ${\mathcal A}'$
the sequences of simple rational functions whose coefficients are in $[0,1]$. Let us use the notation
$\pi_n$ for the projection on the $n$-th coordinate.
First we define the action of $\phi$ on those $f\in {\mathcal A}'$ which have the property that there
is at most one $n$ such that $\pi_n(f)$ is not the identity zero function, and then $\pi_n(f)$ is
a function of the form $\chi_{[a]}$ for some $a\in\alg$.
If there is no such $n$ with $a\neq 0$ then we let $\phi(f)$ be the element of $N(\mathfrak B)$
whose all projections $\pi_n$ are zero. Otherwise, let $n$ be such that $\pi_n(f)\neq 0$ and consider
$T(\pi_n(f))$, which is well defined.
We have no reason to believe that $T(\pi_n(f))$ is a simple function with rational coefficients.
However, there is a function $F((\pi_n(f)))$ which is a simple function with rational coefficients
% in [-1,1] 
and whose
distance to $T(\pi_n(f))$ in $C({\rm St}(\mathfrak B))$ is less than $\dfrac{1}{2^{n+1}}$. We define
$\phi(f)$ to be the unique element $g$ of $N(\mathfrak B)$ such that the only $\pi_m(g)$
which is not identically zero is $\pi_n(g)$ and $\pi_n(g)=F((\pi_n(f)))$. 

Now suppose that $\bar{f}\in {\mathcal A}'$ is such that for exactly one $n$, $\pi_n(\bar{f})$ is not identity
zero and $\pi_n(\bar{f})=\sum_{i=0}^m q_i\chi_{[a_i]}$ for some $a_0,\ldots a_m\in \alg$ and
some rational $q_0,\ldots q_{m}$ in $[0,1]$. For $i\le m$ let $\bar{f}_i$ be the element of $N(\alg)$ whose 
$n$-th projection is $\chi_{[a_i]}$ and all other projections are identity zero. Hence we have
already defined $\phi(\bar{f}_i)$ and we let $\phi(\bar{f})=\sum_{i\le m} q_i\phi(\bar{f}_i)$.

Finally suppose that $f=\langle f_n:\,n<\omega\rangle $ is any  element of ${\mathcal A}'$. Therefore
for every $n$ we have already defined $g_n=\phi(\langle 0,\ldots, f_n, 0\ldots\rangle))$, where $f_n$ is 
on the $n$-th coordinate. Let $\phi(f)=\langle g_n:\,n<\omega\rangle$.
Hence we have defined a linear embedding of  ${\mathcal A}'$ to $N({\mathfrak B})$. We extend this embedding to
${\mathcal A}$ by linearity. We need
to check that this embedding preserves $C$ and $C_0$. 
So suppose that $\bar{f}=\langle f_n:\,n<\omega\rangle$
is in $C^{N(\alg)}\cap {\mathcal A}$ and let $\bar{g}=\langle g_n:\,n<\omega\rangle$ be its image under $\phi$.
We shall show that $\bar{g}\in
C^{N({\mathfrak B})}$ by showing that it is a Cauchy sequence. Let $n, m<\omega$, we shall
consider $||g_n-g_m||$. Clearly it suffices to assume that $\bar{f}\in {\mathcal A}'$.
Let $f_n=\sum_{i\le k} q_i\cdot \chi_{[a_i]}$ and 
$f_m=\sum_{j\le l} r_j\cdot \chi_{[b_j]}$. We have 
\[
||g_n-g_m||=||\phi(f_n)-\phi(f_m)||\le ||\phi(f_n)-T(f_n)||+||T(f_n)- T(f_m)||+||T(f_m)-\phi(f_m)||
\]
\[
\le \sum_{i\le k} |q_i|||\phi(\chi_{[a_i]})-T(\chi_{[a_i]})||+||T(f_n-f_m)||+
\sum_{j\le l} |r_j|||\phi(\chi_{[b_j]})-T(\chi_{[b_j]})||
\]
\[
\le \dfrac{(k+1)}{2^{n+1}}+||T||\cdot ||f_n-f_m||+\dfrac{(l+1)}{2^{m+1}},
\]
which goes to 0 as $n,m\to\infty$.

At the end suppose that 
$\bar{f}=\langle f_n:\,n<\omega\rangle$
is in $C_0^{N(\alg)}$
and let $\bar{g}=\langle g_n:\,n<\omega\rangle$ be its image under $\phi$. 
By the definition of $\phi$ we have that $||g_n||\le ||T(f_n)||+\dfrac{(n+1)}{2^{n+1}}$.
Since $||T||<\infty$ we have that $\lim_{n\to \infty}||T(f_n)||=0$, so in conclusion,
$\lim_{n\to \infty}||g_n||=0$.
$\eop_{\ref{induced}}$
\end{proof}

\section{Invariants for the natural spaces and very positive embeddings}\label{vpemb} We shall now adapt the Kojman-Shelah method of invariants \cite{KjSh409},
to the natural spaces and a specific kind of isomorphic embeddings between Banach spaces, which we call very positive embeddings (see Definition \ref{strictlypositive}). From this point on we assume that $\lambda$ is a regular uncountable cardinal.

\begin{definition} (1) Suppose that $ M$ is a model of size
$\lambda$. A {\em  filtration} of $M$ is a continuous increasing sequence $\langle M_\alpha:\,\alpha<\lambda\rangle$ 
of elementary submodels of $M$, each of size $<\lambda$.

{\noindent (2)} For a regular cardinal $\theta<\lambda$ we use the notation $S^\lambda_\theta$ for $\{\alpha<\lambda:\,{\rm cf}(\alpha)=\theta\}$.

{\noindent (3)} A {\em club guessing sequence on $S^\lambda_\theta$} is a sequence $\langle C_\delta:\,\delta\in S^\lambda_\theta\rangle$ such that 
each $C_\delta$ is a club in $\delta$, and for every club $E\subseteq \lambda$ there is $\delta$ such that  $C_\delta
\subseteq E$.
\end{definition}

\begin{observation}\label{cofinalities} Suppose that $\theta>\aleph_1$ and there is a club guessing sequence 
$\langle C_\delta:\,\delta\in S^\lambda_\theta\rangle$. Then there is a club guessing sequence 
$\langle D_\delta:\,\delta\in S^\lambda_\theta\rangle$  such that for all $i<\theta$
\begin{equation}\label{cofomega}
\cf(i)\neq \omega\implies  \cf(\alpha^\delta_{i})\neq\omega,
\end{equation}
where $\langle \alpha^\delta_i:\,i<\theta\rangle$ is the increasing enumeration of $D_\delta$, for each $\delta$.
\end{observation}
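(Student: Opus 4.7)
The plan is to obtain $D_\delta$ as a thinning of $C_\delta$ via a single fixed reindexing map $g:\theta\to\theta$, chosen so that the positions $g(i)$ hit by the thinning always have uncountable cofinality whenever $i$ is $0$ or a successor. Since $\theta>\aleph_1$ is a regular cardinal, we have $\omega_1<\theta$, so we may define $g$ by $g(0)=\omega_1$, $g(i+1)=g(i)+\omega_1$, and $g(i)=\sup_{j<i}g(j)$ for limit $i$. By regularity of $\theta$, $g$ is a well-defined strictly increasing continuous function from $\theta$ into $\theta$; crucially, every $g(i)$ is a \emph{limit} ordinal of $\theta$, with $\cf(g(i))=\omega_1$ when $i=0$ or $i$ is a successor, and $\cf(g(i))=\cf(i)$ when $i$ is a limit.

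Now for each $\delta\in S^\lambda_\theta$ let $\langle c^\delta_j:j<\theta\rangle$ be the increasing enumeration of $C_\delta$, and set $D_\delta=\{c^\delta_{g(i)}:i<\theta\}$. The map $i\mapsto c^\delta_{g(i)}$ is the composition of two continuous strictly increasing maps and is cofinal in $\delta$, so $D_\delta$ is a club in $\delta$ and its increasing enumeration is precisely $\alpha^\delta_i=c^\delta_{g(i)}$. Since each $g(i)$ is a limit in $\theta$, $c^\delta_{g(i)}$ is the supremum of the strictly increasing sequence $\langle c^\delta_j:j<g(i)\rangle$ inside $C_\delta$, so $\cf(\alpha^\delta_i)=\cf(g(i))$. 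Applying the cofinality calculation for $g$ now yields~(\ref{cofomega}) directly: if $i=0$ or $i$ is a successor then $\cf(\alpha^\delta_i)=\omega_1\neq\omega$, while if $i$ is a limit with $\cf(i)\neq\omega$ then $\cf(\alpha^\delta_i)=\cf(i)\neq\omega$.

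The club-guessing property is inherited for free from $\langle C_\delta\rangle$: since $D_\delta\subseteq C_\delta$, whenever $E\subseteq\lambda$ is a club and $\delta$ witnesses $C_\delta\subseteq E$, the same $\delta$ witnesses $D_\delta\subseteq E$. The only real point that has to be kept in mind—and this is the mild obstacle guiding the choice of $g$—is that each $g(i)$ must be a limit ordinal of $\theta$: if we instead allowed $g(i)$ to be a successor index in $\theta$, then $c^\delta_{g(i)}$ would be a successor element of $C_\delta$ whose cofinality as an ordinal of $\lambda$ is wholly uncontrolled, and so the cofinality computation above would break. Incrementing by $\omega_1$ at each successor step of the recursion defining $g$ is precisely what forces the $g(i)$ to be limits of cofinality $\omega_1$, and this in turn is precisely what the hypothesis $\theta>\aleph_1$ makes available inside $\theta$.
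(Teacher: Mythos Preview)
Your proof is correct, but it takes a genuinely different route from the paper's. The paper argues as follows: after reducing to $\otp(C_\delta)=\theta$, it lets $C'_\delta$ be the set of points of $C_\delta$ of cofinality $>\omega$ and takes $D_\delta$ to be the closure of $C'_\delta$ in $\delta$. The hypothesis $\theta>\aleph_1$ ensures $C'_\delta$ is unbounded in $\delta$; closedness of $C_\delta$ gives $D_\delta\subseteq C_\delta$, so club guessing is inherited. The verification of (\ref{cofomega}) is then implicit: any $\alpha^\delta_i$ with $i=0$ or $i$ a successor is a non-accumulation point of $D_\delta$, hence lies in $C'_\delta$ and so has cofinality $>\omega$; any $\alpha^\delta_i$ with $i$ limit satisfies $\cf(\alpha^\delta_i)=\cf(i)$ by continuity of the enumeration.

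Your approach instead fixes a single normal reindexing $g:\theta\to\theta$ whose values all have cofinality $\omega_1$ or equal to $\cf(i)$, and pulls back through the enumeration of $C_\delta$. This has the virtue of being completely uniform across $\delta$ and makes the cofinality computation explicit rather than leaving it to the reader. The paper's version is shorter and more conceptual (``keep the points of the right cofinality, then close''), but yours makes the mechanism more transparent. One small point: you silently assume $\otp(C_\delta)=\theta$ when you write the enumeration $\langle c^\delta_j:j<\theta\rangle$; the paper states this reduction explicitly, and you should too, since a club of $\delta$ with $\cf(\delta)=\theta$ need not have order type exactly $\theta$.
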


\begin{proof} First of all notice that by passing to subsets if necessary we can without loss of generality assume
that each $C_\delta$ has order type $\theta$. Given $\delta$, let $C'_\delta$ consists of the points of $C_\delta$ of cofinality $>\omega$ and let
$D_\delta$ be the closure of $C'_\delta$ in $\delta$. Since $\theta>\aleph_1$ we have that
$C'_\delta$ is unbounded in $\delta$, so it is clear that $D_\delta$ is
a club of $\delta$ and since we have $D_\delta\subseteq C_\delta$, we obtain that the resulting
sequence is a club guessing sequence on $S^\lambda_\theta$. It also follows that ${\rm otp}(D_\delta)=\theta$, so the
increasing enumeration as claimed exists.
$\eop_{\ref{cofinalities}}$
\end{proof}

The main definition we need is the definition of the {\em invariant}.
Let us suppose that $\theta>\aleph_1$ is regular and that $\langle D_\delta:\,\delta\in S^\lambda_\theta\rangle$ is a club guessing
sequence with an increasing enumeration $\langle \alpha^\delta_i:\,i<\theta\rangle$ of $D_\delta$, for each $\delta$ and satisfying
the requirement (\ref{cofomega}). This sequence will be fixed throughout. The existence of such a sequence will be discussed
at the end of the section but for the moment let us say that Shelah (see Theorem \ref{guess square}) proved that such a sequence exist in many circumstances, notably
for any $\lambda$ regular $\ge \theta^{++}$.

\begin{definition} Suppose that $\mathfrak A$ is a Boolean algebra of size $\lambda$, $\bar{\mathfrak A}=\langle {\mathfrak A}_\alpha:\,\alpha <
\lambda\rangle$ a filtration of $\mathfrak A$, that $\delta\in S^\lambda_\theta$ and that $\bar{f}\in C^{N(\mathfrak A)}
\setminus N(\mathfrak A_\delta)$. An ordinal $i\in S^\theta_{\neq \omega}$ is an element of {\em the invariant} ${\rm inv}_{\bar{\mathfrak A},\delta}(\bar{f})$ iff there is
$\bar{f}'\in C^{N({\mathfrak A}_{\alpha^\delta_{i+1}})}$ 
such that for every $\bar{g}$ in $C^{N(\mathfrak A_{\alpha^\delta_i})}$
we have
\begin{equation}
0\le \lim_n g_n\le |\lim_n f_n-\lim_n f'_n| \implies \bar{g}\in C_0.
\label{eqn:starjedan}
\end{equation} 
\end{definition}

We shall be interested in the kind of embeddings between Banach spaces which will allow us to define appropriate $\phi$
which preserve the invariants, see the Preservation Lemma \ref{preservation}. We have succeeded to do this in the case of
a special kind of positive embeddings, as defined in the following definition.

\begin{definition}\label{strictlypositive} We say that an isomorphic embedding $T:\,C(K)\to C(L)$ is
{\em very positive} if the following requirements hold:
\begin{description}
\item{(i)} $g\ge 0\implies Tg\ge 0$ (positivity),
\item{(ii)}  for every $g\in C(L)\setminus\{0\}$ with $0\le g$, there is $h\ge  0$ with $0\le Th\le g$ and $h\neq 0$,
\item{(iii)} if $0\le Th\le Tf$, $h,f\neq 0$ and $h\neq f\ge 0$ then there is $s\ge 0$ definable from $h$ with $0\le s\le f$.
\end{description}
\end{definition}

We do not know if very positive embeddings were studied in the literature but clearly, one kind of embedding that is very positive, is an order preserving onto embedding. In this case we have Kaplansky's theorem \cite{Kaplansky} mentioned above, which shows that in the presence
of such an embedding from $C(K)$ to $C(L)$ we have that $K$ and $L$ are homeomorphic. We show in the example in
\S \ref{example} that 
the analogue is not true for very positive embeddings, not even when they are assumed to be onto. In particular the question of the number of pairwise nonisomorphic
by very positive embeddings spaces of the form $C({\rm St}(\mathfrak A))$ does not reduce to the well studied and
understood question of the number of pairwise nonisomorphic Boolean algebras of a given cardinality (which for
any infinite $\kappa$  is
always equal to $2^\kappa$, see Shelah's \cite{Sh-c}).  

Let us now make a further assumption on $\lambda$:
\begin{equation}
\kappa<\lambda\implies \kappa^{\aleph_0}<\lambda.
\label{eqn:star}
\end{equation}

\begin{lemma} (Preservation Lemma)\label{preservation} Let $\mathfrak A$ and $\mathfrak B$ be Boolean algebras of size $\lambda$ and 
suppose that $T:\,C({\rm St} (\mathfrak A))\to C({\rm St} (\mathfrak B))$ is a very positive embedding.
Let $\bar{\mathfrak A}$ and $\bar{\mathfrak B}$ be any filtrations of $\mathfrak A$ and $\mathfrak B$ respectively and let 
${\mathcal A}$ denote the linear subspace
of $C^{N(\alg)}$ spanned by the set ${\mathcal A}'$ of sequences of functions whose rational coefficients are in $[0,1]$. 

If
$\phi: {\mathcal A} \to N(\mathfrak B)$ is an isomorphic embedding satisfying that $\lim\phi(\bar{f})=T(\lim(\bar{f}))$ for every
$\bar{f}\in {\mathcal A}$, 
then there is
a club $E$ of $\lambda$ such that for every $\delta$ with $D_\delta\subseteq E$ and for every  $\bar{f}\in {\mathcal A}'
\setminus N(\mathfrak A_\delta)$ with $0\le \lim_n f_n $ and $||\lim_n f_n||=1$
we have that 
\[
{\rm inv}_{\bar{\mathfrak A},\delta}(\bar{f})={\rm inv}_{\bar{\mathfrak B},\delta}(\phi(\bar{f})).
\]
\end{lemma}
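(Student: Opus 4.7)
The plan is to build the club $E$ via a reflection argument in a suitably expanded language, and then to verify the two inclusions ${\rm inv}_{\bar{\mathfrak A},\delta}(\bar f)\subseteq{\rm inv}_{\bar{\mathfrak B},\delta}(\phi(\bar f))$ and its reverse separately, each leaning on different clauses of Definition \ref{strictlypositive}. First I would expand $(\mathfrak A,\bar{\mathfrak A},\mathfrak B,\bar{\mathfrak B})$ with predicates naming $T$, $\phi$, the positive/negative decomposition $x\mapsto(x^{+},x^{-})$, and Skolem functions witnessing clauses (ii) and (iii) of very positivity. Hypothesis (\ref{eqn:star}) ensures that $|N(\mathfrak A_\alpha)|,|N(\mathfrak B_\alpha)|<\lambda$ for each $\alpha<\lambda$, so a continuous elementary chain of substructures of the expanded structure, of size $<\lambda$, exists; I would let $E$ be the club of $\alpha$ at which this chain closes over $\mathfrak A_\alpha$, $\mathfrak B_\alpha$, $\phi\rest(\mathcal{A}\cap N(\mathfrak A_\alpha))$, and the named Skolem functions. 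Then fix $\delta$ with $D_\delta\subseteq E$ and $\bar f$ as in the statement.

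For the forward inclusion, let $i$ belong to ${\rm inv}_{\bar{\mathfrak A},\delta}(\bar f)$ with witness $\bar f'\in C^{N(\mathfrak A_{\alpha^\delta_{i+1}})}$. Applying Corollary \ref{spanning} inside the reflecting model at $\alpha^\delta_{i+1}$ I may replace $\bar f'$ by a member of $\mathcal{A}'\cap N(\mathfrak A_{\alpha^\delta_{i+1}})$ with the same limit, so that $\phi(\bar f')\in N(\mathfrak B_{\alpha^\delta_{i+1}})$ and $\lim_n\phi(\bar f')_n=T(\lim_n f'_n)$. I claim $\phi(\bar f')$ witnesses $i$ on the right. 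If some $\bar h\in C^{N(\mathfrak B_{\alpha^\delta_i})}$ satisfied $0\le\lim_n h_n\le|T(\lim_n f_n)-T(\lim_n f'_n)|$ with $\lim_n h_n\neq 0$, then decomposing $\lim_n f_n-\lim_n f'_n$ into positive and negative parts and using clause (i) yields $|T(\lim_n f_n)-T(\lim_n f'_n)|\le T(|\lim_n f_n-\lim_n f'_n|)$; clause (ii) then produces $h^{\ast}\ge 0$, $h^{\ast}\neq 0$, with $Th^{\ast}\le\lim_n h_n$, and clause (iii) applied to $0\le Th^{\ast}\le T(|\lim_n f_n-\lim_n f'_n|)$ yields a non-zero $s\ge 0$, definable from $h^{\ast}$, with $0\le s\le|\lim_n f_n-\lim_n f'_n|$. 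Since $h^{\ast}$ and $s$ are produced by the named Skolem functions, reflection at $\alpha^\delta_i\in E$ places $s$ in $C({\rm St}(\mathfrak A_{\alpha^\delta_i}))$; Corollary \ref{spanning} then realizes $s$ as $\lim_n g_n$ for some $\bar g\in\mathcal{A}'\cap C^{N(\mathfrak A_{\alpha^\delta_i})}\setminus C_0$, contradicting the choice of $\bar f'$.

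The reverse inclusion is more delicate because $\phi$ is not surjective. Given a witness $\bar g'$ on the $\mathfrak B$-side at level $\alpha^\delta_{i+1}$, I would first pre-process it into the form $\phi(\bar f')$: decomposing $\lim_n g'_n-T(\lim_n f_n)$ into positive and negative parts and iteratively applying clause (ii) inside the reflecting model at $\alpha^\delta_{i+1}$ produces $u',v'\ge 0$ in $C({\rm St}(\mathfrak A_{\alpha^\delta_{i+1}}))$ with $Tu'\le(\lim_n g'_n-T(\lim_n f_n))^{+}$ and $Tv'\le(T(\lim_n f_n)-\lim_n g'_n)^{+}$; setting $\lim_n f'_n=\lim_n f_n+u'-v'$ gives an $\bar f'\in\mathcal{A}'\cap C^{N(\mathfrak A_{\alpha^\delta_{i+1}})}$ with $|T(\lim_n f_n)-T(\lim_n f'_n)|\le|T(\lim_n f_n)-\lim_n g'_n|$, so that $\phi(\bar f')$ remains a witness on the $\mathfrak B$-side. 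Verifying that $\bar f'$ is in turn a witness on the $\mathfrak A$-side then proceeds symmetrically to the forward direction, using clauses (i)--(iii) to propagate any would-be bad $\bar g$ on the $\mathfrak A$-side through $T$ to a contradictory non-zero $\bar h\in C^{N(\mathfrak B_{\alpha^\delta_i})}$ with $0\le\lim_n h_n\le|T(\lim_n f_n)-T(\lim_n f'_n)|$, contradicting the witness property of $\phi(\bar f')$.

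The principal obstacle throughout is the reflection step: clauses (ii) and (iii) of Definition \ref{strictlypositive} only assert the existence of the auxiliary $h$ and $s$ without prescribing any filtration level, so these witnesses must be extracted by named Skolem terms so that the club $E$ reflects them to the correct $\alpha^\delta_i$. Reading "definable from $h$" in clause (iii) as "obtained from $h$ by a designated Skolem function" is what makes condition (iii) interact with the filtration, and the bulk of the technical work of the lemma lies in organizing this expanded language; once this is in place, both inclusions reduce to the manipulations on positivity, limits, and the predicates $C$ and $C_0$ described above.
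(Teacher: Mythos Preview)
Your overall strategy---build a club of $\delta$ at which an expanded structure reflects, then chase clauses (i)--(iii) of Definition~\ref{strictlypositive} through the filtration---is exactly the paper's strategy, and your forward inclusion matches the paper's argument essentially verbatim (the paper also reads ``definable from $h$'' in (iii) as coming from the ambient elementary structure, so your Skolemization is just making this explicit).

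The reverse inclusion differs. The paper does not perform your positive/negative decomposition of $\lim_n g'_n-T(\lim_n f_n)$; instead it applies (ii) \emph{once}, directly to the (topped-up, non-negative) witness $g=\lim_n g'_n$, obtaining some $f'\ge 0$ with $0\le Tf'\le g$, and declares $\bar f'$ the $\mathfrak A$-side witness. Your route is more elaborate but has the virtue that the key inequality $T|f-f'|\le Tu'+Tv'\le |T(\lim_n f_n)-\lim_n g'_n|$ is transparent, whereas the paper's verification leans on the line $T(|f-f'|)=|Tf-Tf'|$, which positivity alone does not justify.

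One correction to your final step: from a bad $\bar g$ on the $\mathfrak A$-side with $0\le\lim_n g_n\le |f-f'|$ you only get, via (i), that $T(\lim_n g_n)\le T|f-f'|\le |T(\lim_n f_n)-\lim_n g'_n|$; you do \emph{not} get $T(\lim_n g_n)\le |Tf-Tf'|$. So $\phi(\bar g)$ contradicts the witness property of the \emph{original} $\bar g'$, not of $\phi(\bar f')$ as you wrote. With that adjustment (and noting that only clause (i), not (ii)--(iii), is needed in this last propagation step), your argument is complete.
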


\begin{proof}
We may assume that the underlying set of $\mathfrak A$ and $\mathfrak B$ is the ordinal $\lambda$.
Let us define a model $M$ with the universe two disjoint copies of the $\omega$-sequences of the simple functions
on $\lambda$ with rational coefficients, interpreted as
the elements of $N(\mathfrak A)$ and 
$N(\mathfrak B)$, all the symbols of  $N(\mathfrak A)$ and $N(\mathfrak B)$ with interpretations induced from these models, and the symbols $\mathcal A$, $\mathcal A'$ and $\phi$. 
By the assumption (\ref{eqn:star}), there is a club $E$ of $\lambda$ such that for every $\delta\in E$ of cofinality not $\omega$
we have that $M$ restricted to the sequences whose ordinal coefficients are $<\delta$ is an elementary submodel of $M$ and that
is has universe corresponding to $N({\mathfrak A}_\delta)\cup N({\mathfrak B}_\delta)$. Let us denote the latter model by $M\rest\delta$.

Suppose now that $D_\delta\subseteq E$.
Choose $\bar{f}\in C^{N(\mathfrak A)}\cap {\mathcal A'} \setminus N({\mathfrak A}_\delta)$ 
with $0\le \lim_n f_n$ and $||\lim_n f_n||=1$. Let $f=\lim f_n$. By the choice of $\delta$ we have that 
$\phi(\bar{f})\in C^{N(\mathfrak B)}\setminus N({\mathfrak B}_\delta)$. Suppose first that $i\in {\rm inv}_{\bar{\mathfrak A},\delta}(\bar{f})$
and let $\bar{f}'\in C^{N({\mathfrak A}_{\alpha^\delta_{i+1}})}$ demonstrate this. 
Let $f'\deq \lim f'_n$,
which is well defined as $\bar{f}'\in C^{N({\mathfrak A})}$. 
Notice that the requirement (\ref{eqn:starjedan}) will hold if we replace
$\bar{f'}$ by any top-up $\bar{f''}$ (see Definition \ref{topping}) as $0\le f\le 1$
and hence for all $x$ we have $|f(x)-f''(x)|\le
|f(x)-f'(x)|$. Since the topping up procedure is definable in $M\rest\alpha^\delta_{i+1}$, we may assume
that $0\le  f'_n \le 1$ for all $n$ and $0\le f'\le 1$.
By Lemma \ref{inI} applied within
$M_{\alpha^\delta_{i+1}}$ we can assume that $\bar{f}'\in{\mathcal A}$.  By the choice of $\phi$ we have that 
$\lim \phi(\bar{f}')=T(f')$
and similarly $0\le \lim \phi(\bar{f}')$. 
By the fact that $D_\delta\subseteq E$ and since $\phi$ is an isomorphism we have 
that $\phi(\bar{f}')\in C^{N({\mathfrak B}_{\alpha^\delta_{i+1}})}$. We would like to use $\phi(\bar{f}')$ to witness that 
$i\in {\rm inv}_{\bar{\mathfrak B},\delta}(\phi(\bar{f}))$, so let us try. By the choice of $\phi$ we have that 
$\lim \phi(\bar{f}')=T(f')$
and similarly $0\le \lim \phi(\bar{f}')$. It remains to check the property (\ref{eqn:starjedan}) of $\phi(\bar{f}')$. 

Suppose for a contradiction that there is $\bar{g}\in 
N({\mathfrak B}_{\alpha^\delta_{i}})$ such that $0\le g\deq \lim_n g_n\le |Tf-Tf'|$ but that $\bar{g}\notin C_0$. 
Applying (ii) we can find $h\ge 0$ with $0\le Th\le g$ and $h\neq 0$. By Corollary \ref{spanning} we can assume that there is
$\bar{h}\in {\mathcal A}$
with $h=\lim_n h_n$ and hence $Th=\lim_n \phi (\bar{h})$. Translating (ii) into the terms of $\phi$ and applying the elementarity of $M\rest \alpha^\delta_i$ we can assume
that  $\bar{h}\in N(\alg_{\alpha^\delta_i})$. Now we apply (iii) to find $s\ge 0$, $s\neq 0$ definable from $h$ and satisfying 
$s\le |f-f'|$. Being definable from $h$, $s$ has an approximation $\bar{s}$ with $\bar{s}$ definable from $\bar{h}$,
hence $\bar{s}\in N(\alg_{\alpha^\delta_i})$.
By topping up if necessary as in Lemma \ref{inI} and in the above paragraph,
we can assume that every element $s_n$ in $\bar{s}$ satisfies $s_n\ge 0$, therefore $\bar{s}$ contradicts the choice
of $\bar{f}'$.

Now let us prove the other direction of the desired equality. Let $i\in {\rm inv} _{\bar{\mathfrak B},\delta}(\phi(\bar{f}))$ as
exemplified by some $\bar{g}\in N({\mathfrak B}_{\alpha^\delta_{i+1}})$.  As in the previous paragraphs, we can
assume that 
$0\le g\deq \lim_n g_n$ and hence by (ii) we can assume that for some $f'\ge 0$ we have
$0\le Tf'\le g$ and by the same argument as above we can assume that there is $\bar{f}'=\langle f'_n:\,n<\omega\rangle\in N(\alg_{\alpha^\delta_{i+1}})\cap {\mathcal A}$
such that $\lim_n f'_n=f'$. Now we claim that $\bar{f}'$ exemplifies that $i\in 
{\rm inv} _{\bar{\mathfrak A},\delta}(\bar{f})$. Suppose for a contradiction that $\langle h_n:\,n\in \omega\rangle\in 
N(\alg_{\alpha^\delta_i})\setminus C_0$ and $0\le \lim h_n\le |\lim f_n-\lim f'_n|$. Let $h=\lim h_n$. As before,
we can assume that $\bar{h}\in {\mathcal A}$ and each $h_n\ge 0$. So by the positivity we
have $0\le Th\le T(|f-f'|)=|Tf-Tf'|$, by the choice of $\phi$ we have that $\phi(\bar{h})=Th$, by elementarity we have
$\phi(\bar{h})\in N({\mathfrak B}_{\alpha^\delta_i})\setminus C_0$. It follows that $\phi(\bar{h})$
contradicts $i\in {\rm inv} _{\bar{\mathfrak B},\delta}(\phi(\bar{f}))$.
$\eop_{\ref{preservation}}$
\end{proof}

The next task is to construct lots of Boolean algebras $\alg$ with different invariants for $N(\alg)$ and then to us the Preservation 
Lemma to show that no fixed $N(\mathfrak B)$ can embed them all.

\begin{lemma} (Construction Lemma)\label{construction} Suppose that $\theta^+<\lambda$.
Then the club guessing sequence $\langle D_\delta:\,\delta\in S^\lambda_\theta\rangle$ can be chosen so that
for any $A\subseteq\theta$ which is
a closed set of limit ordinals, there is a Boolean
algebra $\alg=\alg[A]$, a filtration $\bar{\alg}$ of $\alg$ and a club $E$ of $\lambda$ such that for every $\delta\in E$ there is
$\bar{f}\in {\mathcal A}' \setminus N(\alg_\delta)$ with ${\rm inv}_{\bar{\alg}, \delta}(\bar{f})=A\cap S^\theta_{\neq\omega}$ and
$||\lim f_n||=1$.
\end{lemma}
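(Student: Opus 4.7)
The approach follows the invariant method of Kojman and Shelah \cite{KjSh409}, transported to the natural-space framework. I would begin by refining the club-guessing sequence $\langle D_\delta:\,\delta\in S^\lambda_\theta\rangle$ in the spirit of Observation \ref{cofinalities}: using $\theta^+<\lambda$, close each $D_\delta$ under a suitable Skolem-style function and shrink to a club subset, so that each $\alpha^\delta_i$ is a limit ordinal of uncountable cofinality, the gap $\alpha^\delta_{i+1}-\alpha^\delta_i$ is large (say at least $\omega_1$), and the condition (\ref{cofomega}) is preserved while the sequence still guesses clubs.

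Next, for a fixed closed $A\subseteq\theta$ of limit ordinals, I would construct $\alg=\alg[A]$ with filtration $\bar\alg=\langle\alg_\alpha:\,\alpha<\lambda\rangle$ by induction on $\alpha<\lambda$. The generating set would consist of a freely independent system $\{a_\alpha:\,\alpha<\lambda\}$ together with, for each $\delta\in S^\lambda_\theta$ and each $i\in A\cap S^\theta_{\neq\omega}$, an auxiliary generator $b_{\delta,i}$ introduced into $\alg_{\alpha^\delta_{i+1}}$ by an equation asserting that $[b_{\delta,i}]$ sits ``thinly above'' an accumulation of the free generators indexed in the interval $(\alpha^\delta_i,\alpha^\delta_{i+1})$, so that $\chi_{[b_{\delta,i}]}$ serves as the support of an ``$\alg_{\alpha^\delta_i}$-invisible'' difference $f-f'$. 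The target sequence $\bar f=\langle f_n:\,n<\omega\rangle$ would be a rapidly convergent series $f_n=\sum_{k\le n}2^{-k-1}\chi_{[a_{\gamma_k}]}$ with coefficients in $[0,1]\cap{\mathbb Q}$ (so $\bar f\in{\mathcal A}'$) and $\{\gamma_k:\,k<\omega\}$ chosen unbounded in $\lambda$; this forces $\bar f\notin N(\alg_\delta)$ on a club of $\delta$'s, and $\|\lim_n f_n\|=1$. The club $E$ would be the standard closure club ensuring that $\alg_\delta$ collects exactly the generators indexed below $\delta$. On $E$, for $i\in A\cap S^\theta_{\neq\omega}$, the witness $\bar f'$ exemplifying $i\in{\rm inv}_{\bar\alg,\delta}(\bar f)$ would be assembled by subtracting from $\bar f$ an adjustment supported on $[b_{\delta,i}]$, which by construction is thin to $\alg_{\alpha^\delta_i}$.

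The delicate part, and the principal obstacle, is the negative direction: for $i\in S^\theta_{\neq\omega}\setminus A$, one must show that no $\bar f'\in C^{N(\alg_{\alpha^\delta_{i+1}})}$ exemplifies $i$ in the invariant. Because $A$ is closed, $i$ is bounded away from $A$ below and, if $i$ is not a supremum of elements of $A$, also above; one arranges the construction so that the fragment of $\alg_{\alpha^\delta_{i+1}}$ relative to $\alg_{\alpha^\delta_i}$ is purely the free Boolean algebra on $\{a_\alpha:\,\alpha^\delta_i\le\alpha<\alpha^\delta_{i+1}\}$ together with only those $b_{\delta,j}$ whose index $j$ does not accumulate at $i$. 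A freeness argument via Stone duality then shows that for any such $f'$ the function $|\lim_n f_n-\lim_n f'_n|$ must dominate a nonzero $g\in C({\rm St}(\alg_{\alpha^\delta_i}))$, arising from a nontrivial clopen piece of the quotient ${\rm St}(\alg_{\alpha^\delta_i})$, contradicting the thinness clause (\ref{eqn:starjedan}). The closure hypothesis on $A$ is precisely what ensures that limit points $i\in A$ are not mistakenly excluded from the invariant by the same freeness, since the generators $b_{\delta,j}$ with $j\to i$ accumulate to provide the required approximation on a tail. The rest of the verification is bookkeeping to match the filtration, the invariant, and the club $E$ on the nose.
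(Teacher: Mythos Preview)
Your plan diverges from the paper's proof and contains a genuine gap. The paper does not build $\alg[A]$ directly; it goes through two reductions. First it quotes Kojman--Shelah (Lemma 3.7 of \cite{KjSh409}) to obtain, for each closed set $A$ of limit ordinals, a \emph{linear order} $L[A]$ on $\lambda$ whose element $\delta$ has linear-order invariant $A$ at every suitable $\delta\in S$. Then it defines $\alg[A]$ as the interval algebra of $L[A]$ (generators $a_\alpha$ with $a_\delta\le a_\varepsilon\iff\delta\le_L\varepsilon$), and proves two transfer sublemmas: ${\rm inv}_{\bar L,\delta}(\delta)={\rm inv}_{\bar\alg,\delta}(a_\delta)$ and ${\rm inv}_{\bar\alg,\delta}(a_\delta)={\rm inv}_{\bar\alg,\delta}(\chi_{[a_\delta]})$. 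Crucially, the witnessing element $\bar f$ at $\delta$ is the constant sequence $\chi_{[a_\delta]}$ and \emph{depends on $\delta$}; the approximants $\bar f'$ are $\chi_{[a_{\delta'}]}$ for $\delta'$ coming from the linear-order invariant.

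Your direct construction breaks at the point you flag as delicate, but for a more basic reason than you anticipate. You take a single $\bar f$ with $f=\sum_{k<\omega}2^{-k-1}\chi_{[a_{\gamma_k}]}$, $\{\gamma_k\}$ cofinal in $\lambda$, and propose $\bar f'=\bar f-(\text{adjustment supported on }[b_{\delta,i}])$. But the definition of the invariant requires $\bar f'\in C^{N(\alg_{\alpha^\delta_{i+1}})}$, while your $f$ uses generators $a_{\gamma_k}$ with $\gamma_k$ arbitrarily large; subtracting something in $\alg_{\alpha^\delta_{i+1}}$ cannot bring $f'$ into $N(\alg_{\alpha^\delta_{i+1}})$. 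Worse, if the $a_\alpha$ are free as you stipulate, then the naive truncation $f'=\sum_{\gamma_k<\alpha^\delta_{i+1}}2^{-k-1}\chi_{[a_{\gamma_k}]}$ \emph{does} lie in $N(\alg_{\alpha^\delta_{i+1}})$, and $|f-f'|$ is supported on a Boolean combination of free generators of high index; by independence, no nonzero $g\in C^{N(\alg_{\alpha^\delta_i})}$ can sit below it. This would witness $i\in{\rm inv}_{\bar\alg,\delta}(\bar f)$ for \emph{every} $i$, not just those in $A$, so your negative direction collapses before any freeness-versus-$b_{\delta,j}$ argument begins.

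The moral is that a single globally defined $\bar f$ built from free generators cannot have a prescribed invariant; the paper's $\bar f=\chi_{[a_\delta]}$ varies with $\delta$ and lives in an algebra with strong relations (a chain), which is exactly what makes the two-sided invariant computation go through.
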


The proof of this lemma is presented in the \S \ref{proofCL}. The following theorem of Shelah will be used in the proof of the Construction Lemma as well as in the
the proof of Theorem \ref{glavni}.

\begin{theorem}[Shelah, \cite{Sh420}, 1.4.]\label{guess square}
Let $\theta<\lambda$ be two regular cardinals with $\theta^+<\lambda$.
Then
there is a stationary set $S\subseteq S^\lambda_\theta$ and sequences $\langle c_\delta:\,\delta\in S\rangle$, 
$\langle \PP_\alpha:\,\alpha<\lambda\rangle$ such that:
\begin{itemize}
\item ${\rm otp}(c_\delta)=\theta$ and $\sup(c_\delta)=\delta$,
\item for every club $E$ of $\lambda$ there is $\delta\in S$ with $c_\delta\subseteq E$,
\item $\PP_\alpha\subseteq \PP(\alpha)$ and $|\PP_\alpha|<\lambda$,
\item if $\alpha$ is a non-accumulation point of $c_\delta$ then $c_\delta\cap\alpha\in \bigcup_{\alpha'<\alpha} \PP_\alpha'$.
\item the non-accumulation points of every $c_\delta$ are successor ordinals\footnote{Claim 1.4. in \cite{Sh420} does not state this
property explicitly, but it follows from the first line of the proof of that Claim.}.
\end{itemize}

\end{theorem}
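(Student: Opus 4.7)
The proof proceeds in three stages: securing an approachable ``memory'' sequence via the ideal $I[\lambda]$, iterating to achieve club guessing, and adjusting non‑accumulation points to be successors.

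First I would invoke Shelah's ZFC theorem that when $\theta$ and $\lambda$ are regular with $\theta^+<\lambda$, there is a stationary $S\subseteq S^\lambda_\theta$ lying in the approachable ideal $I[\lambda]$. This furnishes a continuous increasing chain $\langle N_\alpha:\alpha<\lambda\rangle$ of elementary submodels of some $H(\chi)$ with $|N_\alpha|<\lambda$ and $N_\alpha\cap\lambda=\alpha$ on a club, together with, for each $\delta\in S$, a cofinal $a_\delta\subseteq\delta$ of order type $\theta$ every proper initial segment of which lies in some $N_{\alpha'}$ with $\alpha'<\delta$. Putting $\PP^0_\alpha:=N_\alpha\cap\PP(\alpha)$ one has $|\PP^0_\alpha|<\lambda$; and because at a non‑accumulation point $\gamma$ of $a_\delta$ the segment $a_\delta\cap\gamma$ has a largest element strictly below $\gamma$, a suitable closure of the memory yields $a_\delta\cap\gamma\in\bigcup_{\gamma'<\gamma}\PP^0_{\gamma'}$, which is precisely the local memory condition demanded by the fourth bullet.

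Second, to achieve club guessing I would run Shelah's trimming iteration. Starting with $c^0_\delta:=a_\delta$, at a successor stage $\xi$ at which $\langle c^\xi_\delta:\delta\in S\rangle$ fails to guess I would pick a club $E_\xi$ witnessing the failure and set $c^{\xi+1}_\delta:=\acc(E_\xi)\cap c^\xi_\delta$, re‑thinning to restore order type exactly $\theta$ (possible since $\acc(E_\xi)\cap\delta$ is a club in $\delta$ and $\cf(\delta)=\theta$); at a limit $\xi$ take $c^\xi_\delta:=\bigcap_{\xi'<\xi}c^{\xi'}_\delta$, which remains cofinal in $\delta$ on a stationary subset. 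The pigeonhole stabilisation lemma forces the iteration to halt on a stationary $S^*\subseteq S$ in fewer than $\theta^+$ steps: on each surviving $\delta$ the $c^\xi_\delta$ would otherwise form a strictly $\subseteq$‑decreasing $\theta^+$‑chain of order‑$\theta$ subsets of $\theta$, which is impossible. In parallel I would enlarge $\PP^\xi_\alpha$ at each step by the traces $x\cap\acc(E_{\xi'})$ for $x\in\PP^{\xi'}_\alpha$ and $\xi'<\xi$; since there are $\leq\theta^+<\lambda=\cf(\lambda)$ steps and $|\PP^\xi_\alpha|<\lambda$ throughout, the bound $|\PP_\alpha|<\lambda$ persists, and intersection with $\acc(E_{\xi'})$ preserves the ``maximum below $\gamma$'' computation at non‑accumulation points, so the local memory condition survives the whole process.

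Finally, to force every non‑accumulation point to be a successor ordinal I would apply one last trim, replacing any limit non‑accumulation point $\gamma$ of the resulting $c_\delta$ by $\gamma+1$ and absorbing this last operation into $\PP_\alpha$. The principal obstacle is the simultaneous maintenance across the iteration of order type exactly $\theta$, club guessing, and the local memory condition: trimming that cures a guessing failure can both shrink the order type and displace the memory of the new proper initial segments past the point at which they must be retrievable. The resolution is to trim against $\acc(E_\xi)$ rather than $E_\xi$ itself, which preserves the non‑accumulation structure at maxima of proper initial segments, and to close the $\PP^\xi_\alpha$ under the specific intersection operations used, which is affordable only because $\theta^+<\lambda$ and $\lambda$ is regular.
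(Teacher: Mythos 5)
The paper offers no proof of this statement: it is quoted from Shelah's \cite{Sh420}, Claim 1.4, and the only argument the paper supplies is the footnote justifying the final bullet. So your attempt must stand on its own. Its architecture is the right one --- a catalogue $\langle\PP_\alpha\rangle$ fixed in advance in the style of the approachability ideal, the trimming iteration of length $<\theta^+$, and stabilisation via the impossibility of a strictly $\subseteq$-decreasing $\theta^+$-chain of subsets of a set of order type $\theta$ --- and the inductive preservation of the memory condition through a trimming step can in fact be made to work: if $\gamma\in\nacc(c^{\xi+1}_\delta)$ and $\beta=\max(c^{\xi+1}_\delta\cap\gamma)$, then $\beta^+=\min(c^\xi_\delta\setminus(\beta+1))$ is a non-accumulation point of $c^\xi_\delta$ with $\beta^+\le\gamma$ and $c^{\xi+1}_\delta\cap\gamma=(c^\xi_\delta\cap\beta^+)\cap\acc(E_\xi)$, so closing the catalogues under traces suffices.

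There are, however, concrete gaps. The clearest is your last step: replacing a limit non-accumulation point $\gamma$ of $c_\delta$ by $\gamma+1$ destroys club guessing, since $c_\delta\subseteq E$ says nothing about $\gamma+1\in E$ (take $E$ to be the club of limit ordinals); and such limit non-accumulation points are unavoidable after trimming, because intersecting with $\acc(E_\xi)$ turns limit accumulation points of $c^\xi_\delta$ into non-accumulation points of $c^{\xi+1}_\delta$. The successor property cannot be imposed post hoc; as the paper's footnote indicates, in \cite{Sh420} it is arranged in the initial choice of the ladders and must then survive the iteration, which forces a more careful derived-sequence operation than plain intersection. Second, your base case asserts that the $I[\lambda]$ witness can be localized so that $a_\delta\cap\gamma$ is catalogued \emph{below} $\gamma$ rather than merely below $\delta$; the standard statement of ``$S^\lambda_\theta$ meets $I[\lambda]$ in a stationary set'' gives only the latter, and re-indexing a $\lambda$-sized union of catalogues into families of size $<\lambda$ below each $\gamma$ is precisely where work is needed. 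Third, the hypotheses permit $\theta=\aleph_0$, where two cofinal $\omega$-sequences in $\delta$ need not meet, so $c^\xi_\delta\cap\acc(E_\xi)$ can be bounded in $\delta$ and the whole iteration scheme fails; that case needs a genuinely different argument. Given that the paper treats the result as a black box, the honest options are to cite it as the paper does or to reproduce the full construction from \cite{Sh420}.
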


Now we present the main theorem of this part of the paper.

\begin{theorem}\label{glavni} Suppose that $\theta$ and $\lambda$ are two regular cardinals with
$\aleph_2\le\theta<\theta^+<\lambda$, and that $(\forall \kappa<\lambda) \kappa^{\aleph_0}<\lambda$.

Then
\begin{description}
\item[(1)]
the minimal number of spaces of the form $C({\rm St} (\alg))$ of density $\lambda$ needed to embed all Banach spaces 
of the form $C({\rm St} ({\mathfrak B}))$ of density $\lambda$
very positively is $2^\theta$. In particular, if $2^\theta>\lambda$ then there is no very-positively universal 
space $C({\rm St} (\alg))$ of density $\lambda$.
\item[(2)]
if $2^\theta>\lambda$ then there are at least ${\rm cf}(2^\theta)$ pairwise non-very positively isomorphic Banach spaces $C(K)$ of
density $\lambda$.
\end{description}
\end{theorem}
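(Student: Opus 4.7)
The plan is to combine the Construction Lemma \ref{construction} with the Preservation Lemma \ref{preservation} via a pigeonhole argument, following the Kojman--Shelah method of invariants. The starting point is to enumerate $2^\theta$ many closed subsets $\{A_\xi : \xi < 2^\theta\}$ of $\theta$ consisting of limit ordinals such that the intersections $A_\xi \cap S^\theta_{\neq\omega}$ are pairwise distinct. Since $\theta \ge \aleph_2$ is regular and $|S^\theta_{\neq\omega}| = \theta$, this is a straightforward counting exercise: fix a club $C\subseteq S^\theta_{\neq\omega}$ of order type $\theta$, let $C'$ consist of every successor term in the enumeration of $C$, and for each $B\subseteq C'$ let $A_B$ be the topological closure of $B$ in $\theta$; then $A_B\cap C'=B$ since the only new points in the closure are limit points of $C$, which by construction lie outside $C'$.

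For part (1) I would argue by contradiction. Suppose $\{C({\rm St}(\mathfrak{B}_\alpha)) : \alpha < \alpha^*\}$ with $\alpha^* < 2^\theta$ very positively embeds every $C({\rm St}(\alg))$ of density $\lambda$; fix filtrations $\bar{\mathfrak{B}}_\alpha$ once and for all. For each $\xi < 2^\theta$ the Construction Lemma supplies $\alg_\xi := \alg[A_\xi]$, a filtration $\bar{\alg}_\xi$, and a club $E^{(0)}_\xi$ along which witnesses with invariant $A_\xi \cap S^\theta_{\neq\omega}$ exist. I then pick a very positive embedding $T_\xi : C({\rm St}(\alg_\xi)) \to C({\rm St}(\mathfrak{B}_{\alpha(\xi)}))$ for some $\alpha(\xi)<\alpha^*$, let $\phi_\xi$ be the induced map on natural spaces supplied by Theorem \ref{induced}, and apply the Preservation Lemma to obtain a further club $E^{(1)}_\xi$ along which $\phi_\xi$ preserves the invariants of normalized nonnegative elements of $\mathcal{A}'_\xi$. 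Club guessing on $E_\xi := E^{(0)}_\xi \cap E^{(1)}_\xi$ produces $\delta_\xi \in S^\lambda_\theta$ with $D_{\delta_\xi}\subseteq E_\xi$ and hence $\delta_\xi \in E^{(0)}_\xi$ (the latter because $E^{(0)}_\xi$ is closed and $\delta_\xi = \sup D_{\delta_\xi}$); combining the two lemmas at $\delta_\xi$ then yields $\bar{f}_\xi$ satisfying
\[
{\rm inv}_{\bar{\alg}_\xi,\, \delta_\xi}(\bar{f}_\xi) \;=\; {\rm inv}_{\bar{\mathfrak{B}}_{\alpha(\xi)},\, \delta_\xi}(\phi_\xi(\bar{f}_\xi)) \;=\; A_\xi \cap S^\theta_{\neq\omega}.
\]
Hence the assignment $\xi \mapsto (\alpha(\xi), \delta_\xi, \phi_\xi(\bar{f}_\xi))$ is injective, since the third coordinate determines the invariant and the $A_\xi \cap S^\theta_{\neq\omega}$ are pairwise distinct. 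The codomain has cardinality at most $\alpha^* \cdot \lambda \cdot \lambda^{\aleph_0}$, which under the hypothesis $(\forall \kappa<\lambda)\,\kappa^{\aleph_0}<\lambda$ together with $\cf(\lambda) = \lambda > \aleph_0$ equals $\max(\alpha^*, \lambda) < 2^\theta$, the desired contradiction.

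For part (2), consider the $2^\theta$ spaces $\{C({\rm St}(\alg_\xi)) : \xi < 2^\theta\}$ modulo very positive isomorphism. If some equivalence class had size $\ge 2^\theta$, any representative $C({\rm St}(\mathfrak B))$ of the class would very positively contain all $2^\theta$ of its members via the isomorphisms, contradicting part (1) applied with $\alpha^* = 1$. So every class has cardinality $< 2^\theta$, and by the definition of cofinality a partition of a set of size $2^\theta$ into pieces each of size $< 2^\theta$ must have at least $\cf(2^\theta)$ pieces; this gives $\cf(2^\theta)$ pairwise non-very-positively-isomorphic Banach spaces.

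The main technical obstacle I foresee is calibrating the Construction Lemma and the Preservation Lemma so that both are set up over the same club-guessing sequence $\langle D_\delta : \delta \in S^\lambda_\theta\rangle$ (whose existence is guaranteed by Theorem \ref{guess square}), and carefully tracking the passage through the top-up procedure of Definition \ref{topping} so that the specific witnesses produced by Lemma \ref{construction} actually lie in the subspace $\mathcal{A}'$ and meet the normalization $\|\lim_n f_n\|=1$ required as the hypothesis of Lemma \ref{preservation}.
$\eop_{\ref{glavni}}$
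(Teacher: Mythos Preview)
Your proposal is correct and follows essentially the same route as the paper: combine Theorem~\ref{guess square} (to fix the club-guessing sequence), the Construction Lemma, Theorem~\ref{induced}, and the Preservation Lemma, then finish with a pigeonhole count. The only organisational difference is that the paper first collects the family $\FF$ of all invariants realised in $\bigcup_{\alpha<\alpha^\ast} N(\mathfrak A_\alpha)$, observes $|\FF|<2^\theta$, and chooses a \emph{single} $A$ with $A\cap S^\theta_{\neq\omega}\notin\FF$ to derive the contradiction, whereas you run the argument for all $\xi<2^\theta$ simultaneously and count via injectivity of $\xi\mapsto(\alpha(\xi),\delta_\xi,\phi_\xi(\bar f_\xi))$; these are the same pigeonhole, and your technical worry in the last paragraph is exactly what the paper resolves by invoking Theorem~\ref{guess square} at the outset.
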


\begin{proof} Fix sequences $\langle c_\delta:\,\delta\in S\subseteq S^\lambda_\theta\rangle$ and $\langle \PP_\alpha:\,\alpha
<\lambda\rangle$ as guaranteed by Theorem \ref{guess square}. Notice that $\langle c_\delta:\,\delta\in S\rangle$ satisfies that
with $\langle \alpha^\delta_i:\,i<\theta\rangle$ being the increasing enumeration of $c_\delta$, we have that $\cf(i)\neq\omega
\implies \cf(\alpha^\delta_i)\neq\omega$. Hence letting $D_\delta=c_\delta$ for $\delta\in S$ and
$D_\delta$ an arbitrary club of $\delta$ of order type $\theta$ satisfying 
$\cf(i)\neq\omega
\implies \cf(\alpha^\delta_i)\neq\omega$ for $\delta\in S^\lambda_\theta\setminus S$, 
the sequence can be used in the context of the Preservation Lemma
\ref{preservation}. It can also be used in the context of the Construction Lemma \ref{construction}.
Let us therefore find the Boolean algebras ${\mathfrak A}[A]$ as described
in the statement of the Construction Lemma.
Notice that there are $2^\theta$ many different choices for $A\cap S^\theta_{\neq\omega}$. 

1. Suppose for a contradiction that there is a family $\{C({\rm St} (\alg _\alpha)):\,\alpha <\alpha^\ast\}$
for some $\alpha^\ast<2^\theta$  for some algebras ${\mathfrak A}_\alpha$
of size $\lambda$ which is very positively universal for all $C({\rm St} (\alg ))$ for Boolean algebras $\alg$ of size $\lambda$.
Notice that the assumptions we have made on $\lambda$ imply that $\lambda^{\aleph_0}=\lambda$ so the
size of each $N({\mathfrak A}_\alpha)$ is $\lambda$. Let $\FF$ be the family of all subsets of $\theta$ that appear as invariants of
elements of $\bigcup_{\alpha<\alpha^\ast} N({\mathfrak A}_\alpha)$, hence the size of $\FF$ is $< 2^\theta$, and in particular 
there is $A\subseteq \theta$ a closed set of limit ordinals such that $A\cap S^\theta_{\neq\omega}\notin \FF$. 
Let $\alg=\alg[A]$. Suppose that $T$
is a very positive embedding of $C({\rm St} (\alg ))$ into some $C({\rm St} (\alg _\alpha))$ and let
$\phi$ be an embedding of $N(\alg)$ into $N(\alg_\alpha)$ satisfying that for every $\bar{f}\in {\mathcal A}
\cap C^{N(\alg)}$ we have $\phi(\bar{f})=T(f)$, which exists by Theorem \ref{induced}. Let $E_0$ be a club of $\lambda$ 
as guaranteed by the Preservation Lemma and $E_1$ a club of $\lambda$ as guaranteed by the Construction Lemma, 
let $E=E_0\cap E_1$ and suppose that $\delta$ is such that $D_\delta\subseteq E$. Then by the choice of $E_0$ there is
$\bar{f}$ in $N(\alg)$ whose invariant is $A\cap S^\theta_{\neq\omega}$, but then $\phi(\bar{f})$ also has invariant $A\cap S^\theta_{\neq\omega}$, by the choice of $E_1$
and we have a contradiction with the choice of $A$.

2. Consider the family ${\mathcal G}=\{C({\rm St}(\alg [A])):\, A \mbox{ a closed set of limit ordinals in }\theta\}$. By the argument in 1.
for every $A$, the set $\{B\mbox{ a closed set of limit ordinals in }\theta:\, C({\rm St}(\alg [B]))$
embeds very positively into 
$C({\rm St}(\alg [A]))\}$ has size $<2^\theta$, so clearly every $C({\rm St}(\alg [A]))$ is very positively isomorphic
with $<2^\theta$ many $C({\rm St}(\alg [B]))$. Hence we can choose ${\rm cf}(2^\theta)$ pairwise non very positively isomorphic
elements of ${\mathcal G}$ by a simple induction.
$\eop_{\ref{glavni}}$
\end{proof}
An example of circumstances when Theorem \ref{glavni} applies is when
\[
\theta=\aleph_1, \lambda=\aleph_3, 2^{\aleph_0}=\aleph_1\mbox{ but } 2^{\aleph_1}\ge\aleph_4.
\]

\section{Example}\label{example} We give an example of two 0-dimensional spaces $K$ and $L$ which are not homeomorphic
yet they admit a very-positive isomorphism onto. The example itself was constructed by Grzegorz Plebanek in \cite{Plebanekpos}, Example 5.3
when considering positive onto isomorphisms. 

Let $K$ consist of two disjoint convergent sequences $\langle x_n:\,n<\omega\rangle$ with $\lim_n x_n=x$ and
$\langle y_n:\,n<\omega\rangle$ with $\lim_n y_n=y\neq x$ and let $L$ consist of a single convergent sequence
$\langle z_n:\,n<\omega\rangle$ with $\lim_n z_n=z$. Define $T:C(K)\to C(L)$ by letting for all $n$
\[
Tf(z_0)=f(y),
\]
\[
Tf(z_{2n-1})=\dfrac{f(x_n)+f(y)}{2}, Tf(z_{2n+2})=\dfrac{f(x)+f(y_n)}{2} 
\]
Plebanek shows that $T$ is a positive isomorphism onto $C(L)$ and moreover he calculates the inverse $S=T^{-1}$ which is given by
\[
Sh(y)=h(z_0), Sh(x)=2h(z)-h(z_0),
\]
\[
Sh(x_{n})=2 h(z_{2n-1})-h(z_0), Sh(y_n)=2h(z_{2n})-2h(z)+h(z_0) \mbox{ for } n\ge1.
\]
We shall show that $T$ is a very positive embedding. Considering property (ii) of Definition \ref{strictlypositive}, suppose that
$g\in C(L)\setminus\{0\}$ with $0\le g$, we need to find $h\ge  0$ with $0\le Th\le g$ and $h\neq 0$. Let $t$ in $C(L)$ be
such that $t(z_0)=0$ and for $n\ge 1$ the sequence $\{t(z_n)\}_n$ is
non-negative and not identically 0, converges to $0$ and is bounded by the sequence $\{g(z_n)\}_n$. It is possible to find such a sequence since
$g\ge 0$ and $g\neq 0$. Let $t(z)=0$, hence $t\in C(L)$. Let $h=St$, and we note from the definition of $S$ that $h\ge 0$
and we have $Th=t\le g$.

For the property (iii), we shall have an existential proof of the existence of the $s$ as required. Let ${\mathcal S}$
be the family of all non-negative functions in $C(K)$ for which there is exactly one point with non-zero value, and on that 
point the value is equal to that of $h$. Each element of $\mathcal S$ is clearly definable from $h$. 
We claim that some $s\in {\mathcal S}$ can be chosen to demonstrate (iii). Namely, since we do not
have $Tf \le Th$, we cannot have $f\le h$ by positivity. Hence there is some value $w$ with $h(w)<f(w)$, and by the
continuity of the functions $h$ and $f$, there must be some such $w\in \{x_n, y_n:\,n\in \omega\}$. Then letting $s(w)=h(w)$
and $s(v)=0$ for $v\neq w$ gives a function in $\mathcal S$ and we have $0 \le s\le f$ and $0\neq s$.

\section{Proof of the Construction Lemma}\label{proofCL}
We present a proof of Lemma \ref{construction}. Let $S\subseteq S^\lambda_\theta$ and sequences $\langle c_\delta:\,\delta\in S\rangle$, 
$\langle \PP_\alpha:\,\alpha<\lambda\rangle$ be as in the statement of Theorem \ref{guess square}, while $\langle D_\delta:\,\delta\in S^\lambda_\theta\rangle$ is such that $D_\delta=c_\delta$ for $\delta\in S$. For all the definitions of invariants we use here, the value of the invariant is the same with respect to $\langle c_\delta:\,\delta\in S\rangle$ as it is with respect to 
$\langle D_\delta:\,\delta\in S^\lambda_\theta \rangle$ so we shall not make a difference between the two. We start with a
construction lemma for a certain family of linear orders, as obtained by Kojman and Shelah in \cite{KjSh409}. 
Let us give their definition of the invariants of linear orders:

\begin{definition}\label{invlinear} Suppose that $L$ is a linear order with the universe $\lambda$ and ${\mathfrak L}=
\langle L_\delta:\,\delta<
\lambda\rangle$ is a filtration of $L$. Then for every $\delta\in S$ such that the universe of $L_\delta$ is $\delta$ we define
\[
{\rm inv}_{{\mathfrak L},\delta}(\delta)\deq\{i<\theta:\,(\exists\delta'\in (\alpha^\delta_i, \alpha^\delta_{i+1}])(\forall x\in
 L_{\alpha^\delta_i})\, x\le_L \delta\iff x\le_L\delta'\}.
 \]
\end{definition}

Lemma 3.7 in \cite{KjSh409} proves that under the assumptions we have stated, for every closed set $A$ of limit ordinals in $\theta$ 
there is a linear order $L[A]$ with universe $\lambda$ and a filtration ${\mathfrak L}[A]=
\langle L_\delta[A]:\,\delta<
\lambda\rangle$ of $L[A]$ such that for every $\delta\in S$ with $L_\delta[A]=\delta$ we have 
${\rm inv}_{{\mathfrak L[A]},\delta}(\delta)=A$\footnote{Lemma 3.7 in \cite{KjSh409} also states the assumption $2^\theta>\lambda$, but that
assumption is not used in the proof, it is only needed for the final result in \cite{KjSh409}.}.

The idea of our proof is to transfrom the Kojman-Shelah construction first into a construction of a family of Boolean
algebras of size $\lambda$ and then to use these Boolean algebras to define natural spaces of functions with
appropriate invariants. 

\begin{definition}\label{invboolean} Suppose that ${\mathfrak A}$ is a Boolean algebra with the 
set of generators $\{a_\alpha:\,\alpha<\lambda\}$ and $\bar{{\mathfrak A}}=\langle \alg_\delta:\,\delta<
\lambda\rangle$ is a filtration of $\alg$, while
$\delta\in S$ is such that $\alg_\delta$ is generated by $\{a_\alpha:\,\alpha<\delta\}$. We define 
\[
{\rm inv}_{\bar{{\mathfrak A}},\delta}(a_\delta)\deq\{i<\theta:\,(\exists \delta'\in (\alpha^\delta_i, \alpha^\delta_{i+1}])(\forall
\alpha<\alpha^\delta_i) \]
\[
a_\alpha\cap a_\delta= a_\alpha\cap a'_\delta\mod \alg_{\alpha^\delta_i} \mbox{ and }
a_\alpha^c\cap a_\delta= a_\alpha^c\cap a'_\delta\mod \alg_{\alpha^\delta_i}\},
  \]
 where $a= b \mod {\alg_{\alpha^\delta_i}}$ means that for any element $w$ of $\alg_{\alpha^\delta_i}$ 
 we have $w\le a$ iff $w\le b$.
\end{definition}

\begin{definition}\label{algfromlin} Suppose that $L$ is a linear order with universe $\lambda$. We define
a Boolean algebra $\alg[L]$ as being generated by $\{a_\alpha:\,\alpha<\lambda\}$ freely except for the equations
\begin{equation}\label{translation}
a_\delta\le a_\varepsilon \iff \delta\le_L\varepsilon.
\end{equation}
\end{definition}

Since the equations in (\ref{translation}) are finitely consistent with the axioms of a Boolean algebra  it follows from the compactness theorem that the algebra $\alg[L]$ is well defined. Now we shall see a translation between the calculation of the invariants
of the linear orders and the associated Boolean algebras.

\begin{sublemma}\label{odredadoalgebre} Let $L$ be a linear order on $\lambda$ and $\alg[L]$ the algebra associated
to $L$ as per
Definition \ref{algfromlin}. Let $\bar{L}$ and $\bar{\alg}$ be any filtrations of $L$ and $\alg[L]$ respectively.
Then there is a club $E$ such that for every $\delta\in S\cap E$ we have
\[
{\rm inv}_{\bar{L},\delta}(\delta)={\rm inv}_{\bar{{\mathfrak A}},\delta}(a_\delta)
\]
and moreover, for any $i\in {\rm inv}_{\bar{L},\delta}(\delta)$, this is exemplified by $\delta'$ iff 
$i\in {\rm inv}_{\bar{{\mathfrak A}},\delta}(a_\delta)$ is exemplified by $a_{\delta'}$.
\end{sublemma}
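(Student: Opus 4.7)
The plan is to compare the two invariants directly, showing that the same $\delta' \in (\alpha^\delta_i, \alpha^\delta_{i+1}]$ serves as witness for each. First I would fix a club $E \subseteq \lambda$ of ``aligned'' ordinals: for each $\mu \in E$, $L_\mu$ has universe $\mu$ and $\alg_\mu$ coincides with the subalgebra of $\alg[L]$ generated by $\{a_\beta : \beta < \mu\}$. Such a club exists by a standard closure argument, since $\bar L$ and $\bar \alg$ are continuous increasing filtrations whose unions are structures generated in order-type $\lambda$. By a mild adjustment of the club-guessing sequence (replacing each $c_\delta$ by a club of $\delta$ of order type $\theta$ contained in $E$, which preserves the guessing property and does not change the invariants, in keeping with the convention made at the top of the section) I may further assume $\alpha^\delta_i \in E$ for every $\delta \in S \cap E$ and every $i < \theta$.

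Now fix $\delta \in S \cap E$, $i < \theta$, and a candidate $\delta' \in (\alpha^\delta_i, \alpha^\delta_{i+1}]$. By the alignment at $\alpha^\delta_i$, the condition that $\delta'$ witnesses $i \in {\rm inv}_{\bar L, \delta}(\delta)$ unfolds to: for every $\beta < \alpha^\delta_i$, $\beta \leq_L \delta \iff \beta \leq_L \delta'$. For the direction algebra-to-linear, substitute $w = a_\alpha \in \alg_{\alpha^\delta_i}$ into the mod-equivalence $a_\alpha \cap a_\delta = a_\alpha \cap a_{\delta'} \mod \alg_{\alpha^\delta_i}$; this collapses to $a_\alpha \leq a_\delta \iff a_\alpha \leq a_{\delta'}$, which by the defining relation of $\alg[L]$ is exactly $\alpha \leq_L \delta \iff \alpha \leq_L \delta'$. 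For the converse, assume the linear-order condition; since $L$ is linear, $\delta$ and $\delta'$ then have matching order-comparisons to every $\beta < \alpha^\delta_i$ in both directions. By the universal property of $\alg[L]$, the subalgebras generated by $\{a_\beta : \beta < \alpha^\delta_i\} \cup \{a_\delta\}$ and by $\{a_\beta : \beta < \alpha^\delta_i\} \cup \{a_{\delta'}\}$ are freely presented subject to the inherited order relations, and these sets of relations coincide, so the bijection sending $a_\delta$ to $a_{\delta'}$ and fixing each $a_\beta$ extends to a Boolean isomorphism of the two subalgebras over $\alg_{\alpha^\delta_i}$. Hence $w \leq a_\delta \iff w \leq a_{\delta'}$ for every $w \in \alg_{\alpha^\delta_i}$, and intersecting with $a_\alpha$ or with $a_\alpha^c$ and distributing delivers the required mod-equivalences. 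The ``moreover'' clause is built in, since the same $\delta'$ serves on both sides throughout.

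The main obstacle is justifying the freeness claim used in the converse direction, namely that the Boolean subalgebra of $\alg[L]$ generated by a subset $\{a_\beta : \beta \in B\}$ of the distinguished generators is itself freely presented by those generators subject to the order relations inherited from $B$. This is a standard property of presentations by generators and relations, but it is cleanest via the Stone dual: $\alg[L]$ corresponds to the space of monotone maps $f : L \to \{0,1\}$, with $a_\delta$ identified with the clopen set $\{f : f(\delta) = 1\}$. From this description, whether a finite Boolean combination of $\{a_\beta : \beta \in B\}$ is bounded by $a_\delta$ depends only on the partition of $B$ into $\{\beta \leq_L \delta\}$ and $\{\beta >_L \delta\}$, which is precisely the data that coincides for $\delta$ and $\delta'$ under the linear-order hypothesis, giving the desired isomorphism for free.
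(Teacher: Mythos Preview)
Your proof is correct and takes a genuinely different route from the paper's. The paper proves the linear-to-algebra direction by a direct computation: it puts an arbitrary $z\in\alg_{\alpha^\delta_i}$ into disjunctive normal form, reduces each conjunct to the shape $a_{\beta_0}^c\cap a_{\beta_1}$, and then runs a case analysis on the $L$-position of $\alpha,\beta_0,\beta_1$ relative to $\delta$ and $\delta'$ to verify the two mod-equivalences by hand. Your argument instead exploits the presentation of $\alg[L]$: since the subalgebra generated by $\{a_\beta:\beta<\alpha^\delta_i\}\cup\{a_\delta\}$ is itself free on the inherited order relations, and those relations are identical for $\delta$ and $\delta'$ once they partition $\{\beta<\alpha^\delta_i\}$ the same way, the map $a_\delta\mapsto a_{\delta'}$ extends to an isomorphism fixing $\alg_{\alpha^\delta_i}$, from which both mod-equivalences drop out at once. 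The Stone-dual justification you give (points of ${\rm St}(\alg[L])$ as monotone maps $L\to\{0,1\}$, and the extension of such maps from subsets of $L$) is clean and correct. Your approach is shorter, more conceptual, and makes transparent why the result holds; the paper's approach has the virtue of being entirely elementary and self-contained, requiring no appeal to freeness or duality.

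One small remark: your handling of the alignment issue --- ensuring $\alg_{\alpha^\delta_i}$ really is generated by $\{a_\beta:\beta<\alpha^\delta_i\}$ --- via ``adjusting the club-guessing sequence'' is a little awkward, since that sequence is fixed globally and changing $c_\delta$ would in general change the invariants. The cleaner formulation is simply to state the conclusion for those $\delta\in S$ with $D_\delta\subseteq E$ (which is exactly how the sublemma is invoked later), or equivalently to observe that any two filtrations agree on a club, so one may without loss assume the filtrations are the canonical ones $L_\mu=L\rest\mu$, $\alg_\mu=\langle a_\beta:\beta<\mu\rangle$. The paper's own proof glosses over the same point, writing elements of $\alg_{\alpha^\delta_i}$ in DNF on generators below $\alpha^\delta_i$ without comment.
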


\begin{proof} Let $E$ be the club of $\delta$ such that the universe of $L_\delta$ is $\delta$, $L_\delta$ 
is an elementary submodel of $L$,  $\alg[L]_\delta$ is
generated by $\{a_\alpha:\,\alpha<\delta\}$
and is an elementary submodel of $\alg$.
Suppose that $\delta\in E\cap S$. 

First suppose that $i\in {\rm inv}_{{\mathfrak L},\delta}(\delta)$ as exemplified by $\delta'$. Let $\alpha<
\alpha^\delta_i$, we need to prove $a_\alpha\cap a_\delta= a_\alpha\cap a'_\delta\mod \alg_{\alpha^\delta_i} \mbox{ and }
a_\alpha^c\cap a_\delta= a_\alpha^c\cap a'_\delta\mod \alg_{\alpha^\delta_i}$.

\underline{Case 1.} $\alpha<_L \delta$. Hence by the choice of $\delta'$ we have $\alpha<_L\delta'$
and $a_\alpha\le a_\delta$, $a_\alpha\le a_\delta'$. Therefore $a_\alpha\cap a_\delta=a_\alpha\cap a_\delta'=a_\alpha$.

Suppose that $z>0$ is in $\alg_{\alpha^\delta_i}$ and satisfies $z\le a_\alpha^c \cap a_\delta$. By the Disjunctive Normal
Form for Boolean algebras, we can assume that $z=\bigvee_{i\le n}\bigwedge_{j\le k_i} a^{l(i,j)}_{\beta(i,j)}$ for some 
$l(i,j)\in \{0,1\}$ and $\beta(i,j)<\alpha^\delta_i$. It suffices to prove that for every $i$ we have 
$\bigwedge_{j\le k_i} a^{l(i,j)}_{\beta(i,j)}\le a_\alpha^c \cap a_\delta'$. Fix an $i$ and without loss of generality assume that 
$\bigwedge_{j\le k_i} a^{l(i,j)}_{\beta(i,j)}>0$, as otherwise the conclusion is trivial.

Let $A_l=\{j\le k_i:\,\beta(i,j)=l\}$, for $l\in\{0,1\}$. Let $\beta_1$ be the $L$-minimal element of $A_1$, hence 
$\bigwedge_{j\in A_1} a^{l(i,j)}_{\beta(i,j)}=a_{\beta_1}$. Let $\beta_0$ be the $L$-maximal element of $A_0$, hence 
$\bigwedge_{j\in A_0} (a^{l(i,j)}_{\beta(i,j)})^c=[ \bigvee_{j\in A_0} a^{l(i,j)}_{\beta(i,j)}]^c=a_{\beta_0}^c$. In conclusion,
$\bigwedge_{j\le k_i} a^{l(i,j)}_{\beta(i,j)}=a_{\beta_0}^c \cap a_{\beta_1}$. Since we have assumed that 
$\bigwedge_{j\le k_i} a^{l(i,j)}_{\beta(i,j)}>0$, we cannot have $a_{\beta_1}\le a_{\beta_0}$, equivalently $\beta_1\le_L \beta_0$.
Hence we have $\beta_0 <_L  \beta_1$. Similarly, since $a_{\beta_1}\cap a_\alpha^c>0$ we can conclude that $\alpha<_L\beta_1$.
Finally, if we had $\beta_0>_L \delta$, then we would obtain $a_{\beta_0}^c\le a_\delta^c$, in contradiction with 
$0< a_{\beta_0^c}\cap a_{\beta_1}\le a_\delta$, and therefore $\beta_0<_L \delta$.

Suppose now that $\delta<_L \beta_1$.
Therefore $a_{\beta_1}\cap a_\delta^c\neq 0$. On the other hand, $a_{\beta_0}^c\cap a_{\beta_1}\cap a_\delta^c\le
a_\delta\cap a_{\alpha^c}\cap a_{\delta^c}=0$ and hence we must have $a_{\beta_0}\cap a_{\beta_1}\cap a_\delta^c>0$,
which, taking into account $\beta_0<_L\beta_1$ gives that $a_{\beta_0}\cap a_\delta^c>0$ and hence $\delta<_L \beta_0$,
a contradiction. Hence we have $\beta_1<_L\delta$. 
By the choice of $\delta'$ we have $\beta_1<_L\delta'$ and hence $a_{\beta_1}\le a_{\delta'}$ and in particular 
$a_{\beta_0}^c \cap a_{\beta_1}\le a_{\delta'}$, as required.
Since the roles of $\delta$ and $\delta'$ in this proof
were symmetric we can prove in the same way that for any $z>0$ is in $\alg_{\alpha^\delta_i}$ which satisfies $z\le a_\alpha^c \cap a_\delta'$ we also have $z\le a_\alpha^c \cap a_\delta$.

\underline{Case 2.} $\alpha>_L \delta$, so $\alpha>_L \delta'$ by the choice of $\delta'$. We have $a_{\delta}^c\ge a_{\alpha}^c$,
so $a_{\alpha}^c\cap a_\delta=0$ and similarly $a_{\alpha}^c\cap a_\delta'=0$. We also have $a_\alpha\cap a_{\delta}=a_\delta$
and similarly for $\delta'$, hence we need to prove that $a_\delta = a_{\delta'}\mod \alg_{\alpha^\delta_i}$. As in Case 1, it suffices
to show that for every $\beta_0, \beta_1<\alpha^\delta_i$ with $0< a_{\beta_0}^c\cap a_{\beta_1}<a_\delta$, we have 
$a_{\beta_0}^c\cap a_{\beta_1}\le a_\delta'$ (the equality cannot occur), and vice versa. Let us start with the forward direction.  As before, from $0< a_{\beta_0}^c\cap a_{\beta_1}$ we conclude 
$\beta_0<_L \beta_1$. Also, if $\beta_0>_L\delta$ then we have $a_{\beta_0}^c\le a_\delta^c$, contradicting that $a_{\beta_0}^c
\cap a_\delta>0$. Hence $\beta_0<_L\delta$.

If $\beta_1<_L\delta$ then $\beta_1<_L\delta'$ so $a_{\beta_1}\le a_\delta'$ and hence 
$a_{\beta_0}^c\cap a_{\beta_1}\le a_\delta'$, as required. So assume that $\delta<_L \beta_1$. Hence $a_{\beta_1}> a_\delta$
and so $a_{\beta_0}^c\cap a_{\beta_1}> a_{\beta_0}^c\cap a_\delta\ge a_{\beta_0}^c\cap a_{\beta_1}$, a contradiction.
This finishes the proof of the forward direction, and the other direction follows from the symmetry of the roles of $\delta$ and $\delta'$ in the proof.

Now suppose that $i\in {\rm inv}_{{\mathfrak A},\delta}(a_\delta)$ as exemplified by $a_\delta'$. Let $\alpha<
\alpha^\delta_i$,
we need to prove $\alpha<_L\delta\iff \alpha<_L\delta'$. If $\alpha<_L \delta$ then $a_\alpha< a_\delta$ hence
$a_\alpha< a_{\delta'}$ by the assumption, and hence $\alpha<_L \delta'$ by the definition of $\alg[L]$. The other direction follows
by symmetry.
$\eop_{\ref{odredadoalgebre}}$
\end{proof}

\begin{sublemma}\label{fromalgtonat} Let $\alg[L]$ be one of the
algebras described in the above and $\bar{\alg}$ its filtration. 
Then there is a club $E$ of $\lambda$
such that for every $\delta\in S$ with $D_\delta\subseteq E$, we have that 
\[
{\rm inv}_{\bar{\alg},\delta}(\chi_{[a_\delta]})={\rm inv}_{\bar{{\mathfrak A}},\delta}(a_\delta)
\]
and moreover, for any $i\in {\rm inv}_{\bar{{\mathfrak A}},\delta}(\chi_[a_\delta])$, this is exemplified by $\chi_{[a_\delta']}$ iff 
$i\in {\rm inv}_{\bar{{\mathfrak A}},\delta}(a_\delta)$ is exemplified by $a_{\delta'}$. Here, the invariant on the left refers to the
invariant in the natural space $N(\alg)$ and the invariant on the right to the invariant in the algebra $\alg$. The notation
$\chi_{[a]}$ is used for the sequence $\langle  \chi_{[a]}, \chi_{[a]}, \chi_{[a]}, \ldots\rangle$ in $C^{N(\alg)}$.
\end{sublemma}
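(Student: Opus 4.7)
The plan is to prove the sublemma by identifying $\alg[L]$ with the interval algebra of $L$; under this identification, elements of $\alg_{\alpha^\delta_i}$ are finite unions of $L$-intervals with endpoints in $\{\beta:\beta<\alpha^\delta_i\}\cup\{-\infty\}$. I take $E$ to be the club of $\delta$ at which $\alg_\delta$ is generated by $\{a_\alpha:\alpha<\delta\}$ and at which $N(\alg_\delta)$ is an elementary submodel of $N(\alg)$, absorbing into $E$ any further closure conditions required below.

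For the forward direction of the exemplifier iff, suppose $i\in{\rm inv}_{\bar{\mathfrak A},\delta}(a_\delta)$ is exemplified by $a_{\delta'}$ with $\delta'\in(\alpha^\delta_i,\alpha^\delta_{i+1}]$. As in the proof of Sublemma \ref{odredadoalgebre}, the two mod conditions combine to give $v\le a_\delta\iff v\le a_{\delta'}$ for every $v\in\alg_{\alpha^\delta_i}$. I claim the constant sequence $\chi_{[a_{\delta'}]}$ exemplifies $i$ in $N(\alg)$. Indeed, suppose $\bar{g}\in C^{N(\alg_{\alpha^\delta_i})}$ satisfies $0\le\lim g_n\le|\chi_{[a_\delta]}-\chi_{[a_{\delta'}]}|=\chi_{[a_\delta\triangle a_{\delta'}]}$ with $\lim g_n\neq 0$. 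Continuity and pullback through $\pi:{\rm St}(\alg)\to{\rm St}(\alg_{\alpha^\delta_i})$ yield a nonzero $w\in\alg_{\alpha^\delta_i}$ on whose basic clopen $\lim g_n$ is strictly positive, hence $w\le a_\delta\triangle a_{\delta'}$ in $\alg$. Taking WLOG $\delta\le_L\delta'$, this gives $w\le a_{\delta'}$, hence $w\le a_\delta$ by the mod equivalence, contradicting $w\le a_\delta^c$.

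For the reverse direction of the exemplifier iff, argue the contrapositive. If the algebra mod conditions fail at $\delta'$, the proof of Sublemma \ref{odredadoalgebre} produces some $\beta<\alpha^\delta_i$ with (say) $\delta<_L\beta\le_L\delta'$. Using the density properties built into the Kojman-Shelah order $L[A]$, which I incorporate into $E$ as closure conditions, I select a second $\gamma<\alpha^\delta_i$ strictly $L$-between $\delta$ and $\beta$. Then $w=a_\beta\cap a_\gamma^c\in\alg_{\alpha^\delta_i}$ is nonzero and sits below $a_{\delta'}\cap a_\delta^c\subseteq a_\delta\triangle a_{\delta'}$, so the constant sequence $\chi_{[w]}$ witnesses that $\chi_{[a_{\delta'}]}$ does not exemplify $i$ in $N(\alg)$.

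The remaining and hardest part is the set-equality inclusion ${\rm inv}_{\bar{\alg},\delta}(\chi_{[a_\delta]})\subseteq{\rm inv}_{\bar{\mathfrak A},\delta}(a_\delta)$: given an arbitrary continuous exemplifier $\bar{f}'\in C^{N(\alg_{\alpha^\delta_{i+1}})}$, one must produce a generator $a_{\delta'}$ exemplifying $i$ in the algebra. My plan is to threshold the limit $f'=\lim f'_n$ in the compact zero-dimensional space ${\rm St}(\alg_{\alpha^\delta_{i+1}})$ at a value like $1/2$ to obtain a clopen $b\in\alg_{\alpha^\delta_{i+1}}$ whose characteristic function inherits the exemplification property, and then use the interval-algebra structure of $\alg[L]$ together with the combinatorial properties of $L[A]$ to locate a single generator $a_{\delta'}$ realizing over $L_{\alpha^\delta_i}$ the same cut as $b$. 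The principal obstacle is to make this thresholding and extraction precise, ensuring both that the exemplifier property survives passage to the clopen $b$ and that the cut $b$ determines is actually realized by a generator with index in $(\alpha^\delta_i,\alpha^\delta_{i+1}]$; this is exactly where the fine properties of the Kojman-Shelah construction must be invoked.
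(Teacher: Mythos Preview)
Your forward direction (algebra exemplifier $a_{\delta'}$ gives natural-space exemplifier $\chi_{[a_{\delta'}]}$) is correct and in fact cleaner than the paper's DNF case analysis; the projection argument to produce $w\in\alg_{\alpha^\delta_i}$ with $w\le a_\delta\triangle a_{\delta'}$ is exactly right.

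The genuine gap is in the hard inclusion ${\rm inv}_{\bar{\alg},\delta}(\chi_{[a_\delta]})\subseteq{\rm inv}_{\bar{\mathfrak A},\delta}(a_\delta)$, and your plan there does not work as stated. The thresholding step itself is fine: if $b=\{x:f'(x)>1/2\}$ then on $[a_\delta\triangle b]$ one has $|\chi_{[a_\delta]}-f'|\ge 1/2$, so any $0\le g\le\chi_{[a_\delta\triangle b]}$ satisfies $g/2\le|\chi_{[a_\delta]}-f'|$ and hence $\chi_{[b]}$ still exemplifies. But the extraction of a single generator $a_{\delta'}$ from the finite interval-union $b$ is where you defer to ``fine properties of the Kojman--Shelah construction,'' and that construction contains no such guarantee: nothing forces the cut that $b$ determines over $L_{\alpha^\delta_i}$ to be realized by an index in $(\alpha^\delta_i,\alpha^\delta_{i+1}]$. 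You have not identified a mechanism for this, and there isn't an obvious one.

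The paper's route is entirely different and does not touch $f'$ at all. It uses the standing hypothesis $(\forall\kappa<\lambda)\,\kappa^{\aleph_0}<\lambda$ to arrange, via a suitable club $E_0$, that the restricted models $\mathfrak M^*\!\restriction\!\alpha^\delta_{i+1}$ are $\aleph_1$-saturated in $\mathfrak M^*$. The exemplifier $\bar f'$ involves only countably many generator indices $\{\beta_n:n<\omega\}$. The ordinal $\delta$ itself realizes a countable type over the parameters $\bar f'$, $\alpha^\delta_i$, and the $\beta_n$, asserting both the exemplifier property relative to $\bar f'$ and the $<_L$-position of $\delta$ against each $\beta_n$. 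By saturation this type is realized by some $\delta'<\alpha^\delta_{i+1}$. Because $\delta'$ sits in the same $L$-cut over $\{\beta_n\}$ as $\delta$, every $f'_n$ and hence $f'$ is constant on $[a_\delta\triangle a_{\delta'}]$; a short triangle-inequality computation then shows $\chi_{[a_{\delta'}]}$ still exemplifies $i$. So the key idea you are missing is: do not massage $f'$ into a characteristic function; instead use saturation to replace $\delta$ by a small copy $\delta'$.

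A smaller point: in your reverse direction of the exemplifier equivalence, the appeal to density of $L[A]$ to find $\gamma<\alpha^\delta_i$ strictly $L$-between $\delta$ and $\beta$ is unjustified and unnecessary. Once you know some $\beta<\alpha^\delta_i$ lies $L$-between $\delta$ and $\delta'$, the failure of the mod condition already yields (via the argument in the last paragraph of the paper's proof) a nonzero $w\in\alg_{\alpha^\delta_i}$ below $a_\delta\triangle a_{\delta'}$, and the constant sequence $\chi_{[w]}$ directly refutes exemplification by $\chi_{[a_{\delta'}]}$.
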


\begin{proof} Let ${\mathfrak M}^\ast$ be a model consisting of $L$, $\mathfrak A$, two disjoint copies of the $\omega$-sequences of the simple functions
on $\lambda$ with rational coefficients, interpreted as
the elements of $N(\mathfrak A)$ and all the symbols of  $N(\mathfrak A)$ with induced interpretations induced from these models. 
Recall the assumption that $\forall\kappa<\lambda$ we have $\kappa^{\aleph_0}<\lambda$ and notice that it 
implies that there is a club $E_0$ of $\lambda$ such that for every $\delta\in E_0$ of cofinality $>\aleph_0$,
the model ${\mathfrak M}^\ast\rest\delta$ is $\aleph_1$-saturated in ${\mathfrak M}^\ast$ , that is it realizes all the types
with countably many parameters in ${\mathfrak M}^\ast\rest\delta$ which are realized in ${\mathfrak M}^\ast$.
Let $\bar{L}$ be any filtration of $L$, let $E\subseteq E_0$ be a club witnessing  Sublemma
\ref{odredadoalgebre}, and let $\delta\in S$ be such that $D_\delta\subseteq E$. 

Suppose $i\in {\rm inv}_{\bar{{\mathfrak A}},\delta}(a_\delta)$ as exemplified by $a_{\delta'}$ but $\bar{g}\ge 0$
with $\bar{g}\in C^{N(\alg_{\alpha^\delta_i})}\setminus C_0$ and the limit $g$  of $\bar{g}$
satisfies $g\le |\chi_[a_\delta]-\chi_[a_\delta']|=
\chi_{[a_{\delta}\Delta a_{\delta'}]}$. By topping up if necessary (see Definition \ref{topping}) we may assume that each $g_n\ge 0$,
and by throwing away unnecessary elements of $\bar{g}$ we may assume
that every $g_n\neq 0$. 
We can then assume that for each $n$ there are pairwise disjoint $\{b_0^n,\ldots b^{k_n}_n\}\in \alg_{\alpha^\delta_i}$ and
$q_i^n ( i\le k_n)\in {\mathbb Q}^+$ such that $g_n=\Sigma_{i\le k_n} q_i^n \chi_{[b^n_i]}$. Since $||g_n-
\chi_{[a_{\delta}\Delta a_{\delta'}]} ||\into 0$, there has to be a $[b^n_i]$ with a non-empty intersection with 
$[a_{\delta}\Delta a_{\delta'}]$. By applying the Disjunctive Normal form, we can assume that 
$b^n_i=\bigvee_{j\le m}\bigwedge_{o\le o_m} a^{l(m,o)}_{\beta(m,o)}$ for some 
$l(m,o)\in \{0,1\}$ and $\beta(m,o)<\alpha^\delta_i$. Therefore there is $j\le m$ such that 
$\bigwedge_{o\le o_m} [a^{l(m,o)}_{\beta(m,o)}]\cap [a_\delta\Delta a_{\delta'}]\neq \emptyset$. Then we have that 
$\bigwedge_{o\le o_m} [a^{l(m,o)}_{\beta(m,o)}]\cap [a_\delta] \neq \bigwedge_{o\le o_m} [a^{l(m,o)}_{\beta(m,o)}]\cap [a_\delta']$,
and hence there has to be $o\le p_m$ such that $[a^{l(m,o)}_{\beta{(m,o)}}]\cap [a_\delta] \neq
[a^{l(m,o)}_{\beta{(m,o)}}]\cap [a_\delta']$. It follows that $[a^{1-l(m,o)}_{\beta{(m,o)}}]\cap [a_\delta] \neq
[a^{1-l(m,o)}_{\beta{(m,o)}}]\cap [a_\delta']$. Let $\beta=\beta(m,0)$. 
From the choice of $E$, using Sublemma \ref{odredadoalgebre}
we have that for $R\in \{<_L, >_L\}$, $\beta R \delta$ iff $\beta R \delta'$.
We go through a case analysis like in the proof of Sublemma \ref{odredadoalgebre}. 
If $\beta<_L\delta$ then we have $\beta<_L\delta'$ so $[a_\beta]\cap [a_{\delta}]=[a_\beta]=[a_\beta]\cap [a_{\delta'}]$,
a contradiction. If $\beta>_L \delta$ then $[a_\beta^c]\cap [a_{\delta}]=\emptyset =[a_\beta^c]\cap [a_{\delta'}]$,
a contradiction. Therefore $i\in {\rm inv}_{\bar{\alg},\delta}(\chi_{[a_\delta]})$.

\begin{Claim}\label{special witness}
Suppose that $i\in {\rm inv}_{\bar{\alg},\delta}(\chi_{[a_\delta]})$ as exemplified by some $\bar{f}$. Without loss of generality we can assume that $\bar{f}=\chi_{[a_\delta']}$ for some $\delta'$.
\end{Claim}

\begin{Proof of the Claim} First let us notice that if $f=\lim_n f_n$ then for $f'=\min\{f,1\}$ we have $|\chi_{[a_\delta]}-f'|
\le |\chi_{[a_\delta]}-f|$ so we can without loss of generality assume that $f\le 1$. Similarly we can assume that $f\ge 0$,
and then by applying a similar logic we can also assume that $0\le f_n\le 1$ for all $n$ and that $f_n\neq 0$.
Each $f_n$ is a simple function with rational coefficients defined on (without loss of generality) disjoint
basic clopen sets of the form $[a_\beta^{l_\beta}]$
where $\beta<\alpha^{\delta}_{i+1}$ and $l_\beta<2$. Let $\{\beta_n:\,n<\omega\}$ enumerate all the relevant
$\beta$. For each $n$ and $R\in \{<_L,>_L\}$ let $j^n_R$ be the truth value of $``a_{\beta_n}R a_\delta$".
Consider the following sentence
with parameters $\bar{f}$, $\alpha^\delta_i$ and the elements of $\{\beta_n:\,n<\omega\}$: there is $\beta$ such that
\begin{itemize}
\item  for all $\bar{g}\in N(\alg_{\alpha^\delta_i})$
if $0\le \lim_n g_n\le |\lim f_n- \chi_{[a_\beta]}|$, we have $\bar{g}\in C^0$,
\item for all $n$ and $R\in \{<_L,>_L\}$ we have $a_{\beta_n} R a_\beta$ iff $j^n_R=1$.
\end{itemize}
This sentence is true as exemplified by 
$\delta$, so by the choice of $E_0$ it is true in ${\mathfrak M}^\ast\rest{\alpha^\delta_{i+1}}$, say as exemplified by $\delta'$.
Let us note that in $L$ we have
$\delta<_L \delta'$ or $\delta'<_L\delta$, let us assume that $\delta'<_L\delta$, 
as the other case is symmetric.
We claim that $\chi_{[a_\delta']}$ exemplifies that $i\in {\rm inv}_{\bar{\alg},\delta}(\chi_{[a_\delta]})$. If not, we can find
$\bar{g}\in C^{N(\alg_{\alpha^\delta_i})}\setminus C^0$ with $0\le \bar{g}$ and $g\deq\lim_n g_n\le |\chi_{[a_\delta]}-
\chi_{[a_\delta']}|=\chi_{[a_\delta\Delta a_\delta']}\le 1$. By the triangle inequality it follows that 
\[
g\le |\chi_{[a_\delta]}-f|+ |f-\chi_{[a_\delta']}|.
\]
We have that for every $x$ both $|\chi_{[a_\delta]}-f|(x)$ and $|\chi_{[a_\delta]}-f|(x)$ are equal to $f(x)$ if $x\in [a_\delta^c]$ and
$1-f(x)$ if $x\in [a_{\delta'}]$. The possible difference is on $[a_\delta\setminus a_{\delta'}]$, where the former function is
equal to $1-f(x)$ and the latter to $f(x)$. We now claim that $f$ is constant on $[a_\delta\setminus a_{\delta'}]$.

Clearly, it suffices to show that each $f_n$ is constant on $[a_\delta\setminus a_{\delta'}]$, and by the choice of 
$\{\beta_n:\,n<\omega\}$, it suffices to show that for each $l<2$ and each $n$, $\chi_{[a^l_{\beta_n}]}$ is 
constant on $[a_\delta\setminus a_{\delta'}]$. Let $\beta=\beta_n$ for some $n$. If $\beta \le _L \delta'$ then
$a_\beta\le a_{\delta'}$ so $\chi_{[a_{\beta}]}$ is constantly 0 on $[a_\delta\setminus a_{\delta'}]$. In addition,
we have $a_\beta^c\ge a_{\delta'}^c\ge (a_\delta\setminus a_\delta')$ so $\chi_{[a_{\beta}^c]}$ is constantly 1 on $[a_\delta\setminus a_{\delta'}]$. If $\beta\ge_L\delta'$ then $\beta\ge_L\delta$ by the choice of $\delta'$ so $a_\beta\ge a_\delta$ and
hence $\chi_{[a_{\beta}]}$ is constantly 1 on $[a\delta\setminus a_{\delta'}]$. In addition,
$a_\beta^c\le a_{\delta}^c$ so $\chi_{[a_{\beta}^c]}$ is constantly 0 on $[a_\delta\setminus a_{\delta'}]$, and the statement is proved.

Let $\varepsilon\in [0,1]$ be such that $f$ is constantly $\varepsilon$ on $[a_\delta\setminus a_{\delta'}]$. Say $\varepsilon\le 1/2$
as the other case is symmetric. Hence $\max\{f, 1-f\}$ on $a_\delta\setminus a_{\delta'}$ is $1-f$ and in particular
we have 
\[
0\le g\le |\chi_{[a_\delta]}-f|+ |f-\chi_{[a_\delta']}|\le 2\cdot |\chi_{[a_\delta]}-f|
\]
and hence $1/2\cdot g$ is a function which contradicts that $\bar{f}$ exemplifies that $i\in {\rm inv}_{\bar{\alg},\delta}(\chi_{[a_\delta]})$. A contradiction and hence $\chi_{[a_\delta']}$ exemplifies that $i\in {\rm inv}_{\bar{\alg},\delta}(\chi_{[a_\delta]})$ as
required.
$\eop_{\ref{special witness}}$
\end{Proof of the Claim}

Now suppose that $i\in {\rm inv}_{\bar{\alg},\delta}(\chi_{[a_\delta]})$ and assume without loss of generality by Claim 
\ref{special witness}
that $\bar{f}=\chi_{[a_\delta']}$ for some $\delta'$. We need to prove that $i\in {\rm inv}_{\bar{{\mathfrak A}},\delta}(a_\delta)$
as exemplified by $a_{\delta'}$. But indeed, if for some $\beta<\alpha^\delta_i$ and $l<2$  we have that $a_\beta^l\cap a_\delta
\neq a_\beta\cap a_{\delta'}\mod \alg^{\alpha^\delta_i}$, then there is $w>0$ in  $\alg^{\alpha^\delta_i}$ with
$w\le a_\delta\Delta a_{\delta'}$ and then clearly the sequence $\langle \chi_{[w]},  \chi_{[w]}, \ldots \rangle$ is not in $C^0$
and is below $\langle \chi_{[a_\delta\Delta a_{\delta'}]},  \chi_{[a_\delta\Delta a_{\delta'}]}, \ldots \rangle$, a contradiction.
$\eop_{\ref{fromalgtonat}}$
\end{proof}

\bibliographystyle{plain}
\bibliography{biblio}

\end{document}